\newtheorem{theorem}{Theorem}[section]
\newtheorem*{theorem*}{Theorem}
\newtheorem{lemma}[theorem]{Lemma}
\newtheorem{definition}{Definition}
\newcommand{\LL}{\mathcal{L}}
\newcommand{\zz}{\mathbb{Z}}
\newcommand{\hind}{h_{\mathit{ind}}}
\newcommand{\Xinf}{X^{\otimes \infty}}
\newcommand{\Xmax}{\widehat{X^{\otimes \infty}}_{\max}}
\title[On independence and entropy for isotropic subshifts]{On independence and entropy for high-dimensional isotropic subshifts}
\date{}
\author{Tom Meyerovitch}
\address{Tom Meyerovitch\\
Department of Mathematics\\
Ben-Gurion University of the Negev}
\email{mtom@math.bgu.ac.il}
\author{Ronnie Pavlov}
\address{Ronnie Pavlov\\
Department of Mathematics\\
University of Denver}
\email{rpavlov@du.edu}
\keywords{Entropy; symbolic dynamics; mean field theory; measures of maximal entropy}
\subjclass[2000]{Primary: 37B40, 37D35; Secondary: 37B10}
\begin{document}

\begin{abstract}
In this work, we study the problem of finding the asymptotic growth rate of the number of of $d$-dimensional arrays with side length $n$ over a given alphabet which avoid a list of one-dimensional ``forbidden'' words along all cardinal directions, as both $n$ and $d$ tend to infinity. Louidor, Marcus, and the second author called this quantity the ``limiting entropy''; it is the limit of a sequence of topological entropies of a sequence of isotropic $\mathbb{Z}^d$ subshifts with the dimension $d$ tending to infinity \cite{LMP}. We find an expression for this limiting entropy which involves only one-dimensional words, which was implicitly conjectured in \cite{LMP}, and given the name ``independence entropy.'' In the case where the list of ``forbidden'' words is finite, this expression is algorithmically computable and is of the form $\frac{1}{n} \log k$ for $k,n \in \mathbb{N}$. Our proof also characterizes the weak limits (as $d \rightarrow \infty$) of isotropic measures of maximal entropy; any such measure is a Bernoulli extension over some zero entropy factor from an explicitly defined set of measures. We also demonstrate how our results apply to various models previously studied in the literature, in some cases recovering or generalizing known results, but in other cases proving new ones.

The core idea of our proof is to consider certain isotropic measures on a limiting ``infinite dimensional'' subshift, and apply the classical theorem of de Finetti on exchangeable random variables.
\end{abstract}

\maketitle

\section{Introduction}

We start with a simple, concrete problem to illustrate our main result:
How many $d$-dimensional arrays of side length $n$ of letters from the English alphabet are there in which the word ``ADD'' does not appear along any cardinal direction?
Let us denote this number by $B_{n,d}$. In general, there are no known closed-form expressions for such quantities for $d > 1$.
What about the limit $B_d := \lim_{n \to \infty} \frac{1}{n^d} \log B_{n,d}$ for a fixed $d > 1$?
It turns out that again there is generally no general closed-form expression for $d > 1$.

However, a direct application of our main result gives:
$$\lim_{d,n \to \infty} \frac{1}{n^d} \log B_{n,d} = \frac{1}{2}\log 25 + \frac{1}{2}\log 26.$$
Our result also gives information about the limit of uniform distributions over such arrays, again as $n,d \rightarrow \infty$.
The general framework within which we state and prove our results is the field of \emph{symbolic dynamics}.

Symbolic dynamics is concerned with
the study of particular $\mathbb{Z}^d$ topological dynamical systems
called subshifts. A $\mathbb{Z}^d$ subshift is defined by a finite
set $\Sigma$, called an {\it alphabet}, and a (possibly infinite) set
$\mathcal{F}$ of functions from finite subsets of
$\mathbb{Z}^d$ to $\Sigma$. The elements of $\mathcal{F}$ are called \textit{forbidden configurations}. The
$\mathbb{Z}^d$ subshift with alphabet $\Sigma$ induced by
$\mathcal{F}$, denoted by $X(\mathcal{F})$, is defined to be the set
of points in $\Sigma^{\mathbb{Z}^d}$ in which no translate of a (forbidden) configuration from
$\mathcal{F}$ appears. Equivalently, a $\mathbb{Z}^d$ subshift is a subset of $\Sigma^{\mathbb{Z}^d}$ which is compact with respect to the product topology and invariant under the \emph{shift} action of $\mathbb{Z}^d$.
In the special case where $\mathcal{F}$ is finite,
$X$ is called a $\mathbb{Z}^d$ \emph{shift of finite type}, or SFT.

In this paper we obtain results describing the asymptotic behavior
of high-dimensional \emph{isotropic} $\zz^d$ subshifts;
we use the word ``isotropic'' here to refer to an
object (e.g. subshift, measure) which exhibits the same behavior along each cardinal direction.
Our results mostly concern various notions of entropy for such subshifts.
 
Given a one-dimensional subshift $X = X(\mathcal{F}) \subset \Sigma^{\mathbb{Z}}$, the set of possible points in $\Sigma^{\mathbb{Z}^d}$
for which words from $\mathcal{F}$ do not occur in any row along any cardinal direction is a $d$-dimensional subshift, which was
called the $d$th \emph{axial power} of $X$ in \cite{LMP}, and which we denote by $X^{\otimes d}$. For instance, for $\Sigma = \{0,1,2\}$ and $\mathcal{F} = \{00,11,22\}$,
$X^{\otimes d}$ contains all $\{0,1,2\}$-colorings of $\zz^d$ where adjacent sites (sites with distance $1$) have distinct colors.

The \emph{topological entropy}, which we will review in Section \ref{defns}, is a useful quantity associated to any $\mathbb{Z}^d$ subshift which, in some sense, measures its complexity. Our main result concerns, for any one-dimensional subshift, the limit as $d \rightarrow \infty$ of topological entropies of $\mathbb{Z}^d$ of its axial powers, which we call the \emph{limiting entropy} of $X$ and denote by $h_{\infty}(X)$. 
 We will show (Lemma~\ref{lem:hinf_hXinf}) that $h_{\infty}(X)$ can also be viewed as the
topological entropy of a suitably defined ``infinite-dimensional'' axial power; 
this observation will be essential to all of our arguments.
In \cite{LMP}, connections were made between the limiting entropy and another quantity, called independence entropy.

The \emph{independence entropy} of a one-dimensional subshift $X$ (denoted by $\hind(X)$)
is, informally speaking, a measure of how much of the
topological entropy of $X$ comes from sitewise independent behavior.
If a one-dimensional subshift $X$ has alphabet $\Sigma$, then we say
that a string $A_1 A_2 \ldots A_n$ of nonempty subsets of $\Sigma$
is ``independently legal for $X$'' if $a_1 \ldots a_n$ appears in a
point of $X$ for every choice of $a_1 \in A_1, \ldots, a_n \in A_n$.

For example, consider the one-dimensional subshift of finite type
consisting of all $x \in \{0,1\}^{\mathbb{Z}}$ which do not contain
consecutive $1$s, often called the \emph {golden mean shift} or the \emph {one-dimensional hard-square model}, denoted here by $X_G$.
Then $\{0,1\} \{0\} \{0,1\}$ is independently legal for $X_G$, since
all four words $000, 001, 100, 101$ appear in points of $X_G$.
However, $\{0,1\} \{0,1\} \{0\}$ is not independently legal for $X_G$,
since the word $110$ is illegal in $X_G$. (This is because $110$
contains consecutive $1$s.)

Any independently legal string of subsets for a subshift $X$ can be
thought of as a source of words appearing in points of $X$ which are
induced by sitewise independent choices. The independence entropy of
$X$ is defined as the asymptotic exponential growth rate (in $n$) of the
maximum number of words in $X$ so induced by a single $n$-letter
independently $X$-legal string. For example, in the case of $X_G$ above, it is easy to see that
the independently $X_G$-legal strings which induce the most legal words
in $X_G$ are words of the form $\{0,1\} \{0\} \{0,1\} \{0\} \ldots$ which
alternate between $\{0,1\}$ and $\{0\}$. For any $n$, such a string induces
$2^{\lceil 0.5n \rceil}$ words in $X_G$, and so the independence entropy of
$X_G$ is $\frac{\log 2}{2}$. 

Independence entropy can be defined in a similar way for any $\mathbb{Z}^d$ subshift, and
in \cite{LMP} it was shown that $\hind(X^{\otimes d}) = \hind(X)$ for any $X$ and $d$. This was then
used to show that $\hind(X) \leq h_{\infty}(X)$ for all subshifts $X$. It was also
implicitly conjectured in \cite{LMP} that the two quantities $\hind(X)$ and $h_{\infty}(X)$
are always equal. Our main result is that this conjecture is true.

\begin{theorem}\label{mainresult}
\label{hind=hinf} For any $\zz$ subshift $X$, the limiting entropy of $X$ is equal to the independence entropy of $X$.
\end{theorem}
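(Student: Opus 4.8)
The inequality $\hind(X) \le h_\infty(X)$ is already known from \cite{LMP}, so the plan is to prove the reverse inequality $h_\infty(X) \le \hind(X)$. By Lemma~\ref{lem:hinf_hXinf} I would replace $h_\infty(X)$ by the topological entropy of the "infinite-dimensional" axial power $\Xinf$ (or its maximal compactification $\Xmax$), so that the goal becomes: any shift-invariant measure on $\Xinf$ has entropy rate (along any single coordinate) at most $\hind(X)$. The central tool, as the abstract announces, is de Finetti's theorem: the symmetry group of $\Xinf$ permuting the infinitely many axes acts on invariant measures, and an ergodic-decomposition / exchangeability argument should let me assume the measure is exchangeable with respect to these axis-permutations.

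**The de Finetti step.** Concretely, I would take a measure $\mu$ of maximal entropy on $\Xinf$ (or a weak-$*$ limit of isotropic MMEs on $X^{\otimes d}$). Restricting attention to a single row, the configuration along that row, together with the way it sits inside the array, should be describable via a family of random variables that are exchangeable in the "perpendicular" directions. de Finetti then represents $\mu$ (conditioned on an appropriate tail/invariant $\sigma$-algebra) as a mixture of measures under which the perpendicular fibers are i.i.d. This is precisely the mechanism that manufactures \emph{sitewise independent} behavior: conditioned on the de Finetti parameter, the letters one reads along a fixed row are constrained only by the requirement that \emph{every} combination occurring with positive probability in the perpendicular fibers must extend to a legal point — i.e. the support of the conditional row-distribution is an independently $X$-legal string of subsets. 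Summing the conditional entropies against the mixing measure and using concavity/affinity of entropy, one bounds $h(\mu)$ by the supremum of entropies of such i.i.d.-over-fibers measures, and each of those is at most $\hind(X)$ by the very definition of independence entropy.

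**Anticipated main obstacle.** The hard part will be making the de Finetti reduction genuinely yield an \emph{independently legal} string of subsets, rather than just a weaker combinatorial object. The subtlety is that legality in $\Xinf$ is a global condition along infinitely many directions, so one must argue that the support of the conditional measure on a single row really is a product of subsets each of which can be filled in independently while keeping the whole array legal — this requires carefully exploiting that, conditionally on the de Finetti parameter, distinct perpendicular fibers are independent, and then a compactness/diagonal argument to pass from "legal in arbitrarily large finite boxes" to "legal in $\Xinf$." A secondary technical point is bookkeeping with the compactification $\Xmax$ and checking that entropy is not lost when passing between $\Xinf$, $\Xmax$, and the finite-dimensional axial powers; and one must confirm that the entropy functional behaves affinely under the de Finetti mixture (standard, but needs the right measurable structure). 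Once these are in place, the chain $h_\infty(X) = h(\Xmax) \le \sup_\mu h(\mu) \le \hind(X) \le h_\infty(X)$ closes the argument.
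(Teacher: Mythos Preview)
Your high-level plan---reduce to $h(\Xinf)$, average to get a $\mathbb{P}$-invariant measure of maximal entropy, apply de Finetti, and extract an independently legal string of subsets---matches the paper's architecture. You also correctly identify the delicate point that de Finetti must genuinely produce an element of $\widehat{\Xinf}$ (the paper handles this in Lemma~\ref{indepupdates}, via the factor map $\phi_\mu$ sending $x$ to the point recording its allowable local perturbations). Two remarks, one minor and one substantive.

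\emph{Minor:} the symbol $\Xmax$ in the paper is not a compactification of $\Xinf$; it denotes the set of points $\hat{x}\in\widehat{\Xinf}$ with $S(\hat{x})=\hind(X)$. You don't need it for the proof of Theorem~\ref{hind=hinf}.

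\emph{Substantive gap:} the step ``summing the conditional entropies against the mixing measure and using concavity/affinity of entropy, one bounds $h(\mu)$ by the supremum of entropies of such i.i.d.-over-fibers measures'' does not go through as stated. The de Finetti components $\mu_{\hat{x}}$ are \emph{not} shift-invariant (each is a product over a fixed, position-dependent $\hat{x}$), so affinity of measure-theoretic entropy does not apply to this disintegration. What you actually get from Rokhlin--Abramov is
\[
h(\mu)=h(\hat{\mu})+h(\mu\mid\phi_\mu),
\]
where $\hat{\mu}=\phi_\mu(\mu)$ is the push-forward to $\widehat{\Xinf}$. Your de Finetti argument bounds only the second term by $\hind(X)$; the first term $h(\hat{\mu})$ is the entropy carried by the mixing parameter itself, and nothing you have written controls it. The paper closes this gap with a separate and nontrivial lemma (Lemma~\ref{zeroentfactor}): for a $\mathbb{P}$-invariant measure of maximal entropy, $h(\hat{\mu})=0$. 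The proof uses that, $\mu$-a.s., $(\phi_\mu(x))_g$ is determined by $(\phi_\mu(x))_{\{g\}^c}$ (since it is both a function of $x_{\{g\}^c}$ and, by the product structure from Lemma~\ref{lem:mme_rel_ind}, conditionally independent of it given $(\phi_\mu(x))_{\{g\}^c}$), and then a block-entropy counting argument forces $h(\hat{\mu})=0$. Without this zero-entropy step your chain of inequalities does not close.
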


For any $\zz$ SFT $X$, it was shown in \cite{LMP} that $\hind(X)$ is of the form $\frac{\log k}{n}$ where $k,n \in \mathbb{N}$,
and that there is an simple algorithm to compute $\hind(X)$. Thus, an important consequence of our result is that the limiting entropy for subshifts of finite type can be easily computed, and is always a rational multiple of the logarithm of a natural number. This is in sharp contrast to the topological entropy of $d$-dimensional SFTs, for which there is no known explicit expression, even for some of the simplest nontrivial examples.

To give an idea of the structure of the paper, here is a brief outline of the proof. As stated above, we begin by verifying that the limiting entropy of $X$ coincides with the topological entropy of a limiting ``infinite dimensional axial product'' $X^{\otimes \infty}$. The inequality $\hind(X) \leq h_{\infty}(X)$ has already been established,  so we need only prove the reverse inequality. By the variational principle, it is sufficient to show that $\hind(X)$ is greater than or equal to the measure-theoretic entropy for $X^{\otimes \infty}$ with respect to an arbitrary shift-invariant measure. By using the fact that $X^{\otimes \infty}$ is isotropic, we then show that it suffices to consider only shift-invariant measures which are also invariant with respect to finite permutations of the cardinal directions. For such measures, we can consider the conditional measure of states at certain sites given states on certain other sites as random variables, which turn out to be \textit{exchangeable}, and so by de Finetti's theorem they are mixtures of identically distributed random variables. This structure guarantees some independence of these random variables, which can be leveraged into a proof that for almost every point in $X^{\otimes \infty}$, if at each site we associate the set of admissible states given the states of all other sites, the resulting point is ``independently legal'' for $X^{\otimes \infty}$. This naturally defines a measure-theoretic factor map (shift invariant sub-$\sigma$-algebra), with respect to which the relative entropy is bounded from above by the independence entropy of $X^{\otimes \infty}$ (which is equal to the independence entropy of $X$). To conclude the proof, we verify that if our measure is in addition a measure of maximal entropy, the measure-theoretic entropy of this factor is zero.

The remainder of the paper is organized as follows. Section~\ref{defns} contains necessary definitions and preliminary results for our arguments and results. Section~\ref{mainproof} contains the proof of Theorem~\ref{mainresult}. Section~\ref{mmes} contains results regarding measures of maximal entropy for our infinite-dimensional axial power $X$, including a uniqueness criterion. Section~\ref{sec:application} discusses some previously existing results for specific models and how our results fit into this framework, and Section~\ref{future} discusses some natural questions and directions for future work.\\

\textbf{Acknowledgement:} We are grateful to Brian Marcus for his encouragement and for many stimulating discussions, and to Aernout van Enter for explaining some relations of our work to mean-field theory in statistical mechanics. We would also like to thank the math department of the University of British Columbia and the Pacific Institute for Mathematical Sciences for hosting a visit of the second author, when the initial part of this research took place.

\section{Definitions and preliminaries}
\label{defns}

We will consider some actions by homeomorphisms of certain countable groups: $\zz^d$ for $d \in \mathbb{N}$, $\bigoplus_{\mathbb{N}}\mathbb{Z}$, which we denote by $\zz^{\infty}$, and the group of permutations on $\mathbb{N}$ which fix all but finitely many integers, which we denote by $\mathbb{P}$.

By definition, $\mathbb{Z}^{\infty}=\bigoplus_{\mathbb{N}}\mathbb{Z}$ is the countable group
of infinite sequences of integers which have only finitely many non-zero terms. 

All of these groups are \textit{amenable}: each admits a F\o lner sequence, which is any sequence $F_n$ of finite subsets for which
$\frac{|g F_n \triangle F_n|}{|F_n|} \underset{n \rightarrow \infty}{\longrightarrow} 0$ for all $g \in G$. For any $d \in \mathbb{N}$,
$F^{(d)}_n := [-n,n]^d$ is  a F\o lner sequence in $\mathbb{Z}^d$.
For the group $\mathbb{Z}^{\infty}$, an example of a F\o lner sequence is given by $F^{(\infty)}_n = [-n,n]^n \times
\{0\}^{\infty}$. When the dimension is obvious from context, we will refer to these sets simply as $F_n$.

Sometimes it will be useful to view $\mathbb{Z}^d$ ($d \in \mathbb{N} \cup \{\infty\}$) as the vertex set of a graph, where $v$ and $w$ are adjacent
if and only if $\sum_i |v_i - w_i| = 1$. For any subset $S \subseteq \mathbb{Z}^d$, we then define its \textit{boundary} $\partial S$ to
be the set of all vertices in $\mathbb{Z}^d \setminus S$ adjacent to some vertex in $S$.

For any finite set $\Sigma$, which we call an {\it alphabet}, and any $d \in \mathbb{N} \cup \{\infty\}$,
define the {\it $d$-dimensional full shift}
to be the topological dynamical system given by the space $\Sigma^{\zz^d}$, endowed with the
{\it shift $\zz^d$-action} $(\sigma_v)$, defined by $(\sigma_v x)_g := x_{g+v}$ for all $v,g \in \zz^d$ and $x \in \Sigma^{\zz^d}$.
The full shift is endowed with the (discrete) product topology, with respect to which each shift is a homeomorphism.

A {\it configuration} is defined to be any $c \in \Sigma^S$, for
 a finite set $S \subseteq \zz^d$. The set $S$ is called the {\it shape} of the configuration.
For any finite set of configurations $w_1, \ldots, w_n$ with disjoint shapes $S_1, \ldots, S_n$,
their {\it concatenation} is the configuration $w_1 w_2 \cdots w_n$ with shape $\bigcup_{i=1}^n S_i$ defined
by $(w_1 w_2 \cdots w_n)_{S_i} = w_i$ for $1 \leq i \leq n$. For historical reasons, when $d = 1$ a configuration
is referred to as a {\it word}.

A {\it $\zz^d$ subshift} is a subset of the $\zz^d$ full shift which is shift-invariant and closed (therefore compact) with respect to the product topology. Elements of a $\zz^d$ subshift are called {\it points}, and, again for historical reasons, when $d = 1$ we call them {\it sequences.} Any $\zz^d$ subshift $X$ can be defined by a (possibly infinite) set $\mathcal{F}$ of forbidden configurations
in the following way:
\[
X = X(\mathcal{F}) := \{x \in \Sigma^{\zz^d} \ : \  x_{S+n} \notin \mathcal{F} \textrm{ for all finite } S \subseteq \zz^d \textrm{ and } n \in \zz^d\}.
\]
When $\mathcal{F}$ is finite, we say that $X(\mathcal{F})$ is a {\it $\zz^d$ subshift of finite type} or {\it SFT}. For $d=1$, if $\mathcal{F} \subset \Sigma^{\{0,\ldots,k\}}$  we say that $X(\mathcal{F})$ is a {\it $k$-step SFT}.

For any $\zz^d$ subshift $X$, we define the {\it language} of $X$, written $\LL(X)$, to be the set of all configurations which appear
within points of $X$. For any finite $S \subset \zz^d$, denote by $\LL(X,S) := \LL(X) \cap \Sigma^S$ the set of configurations
in $\LL(X)$ with shape $S$. For any $\zz^d$ subshift $X$ and $w \in \LL(X,S)$, the set $[w] := \{x \in X \ : \ x_S = w\}$ is called
the {\it cylinder set} of $w$.

We now recall the definition of axial powers of subshifts as in \cite{LMP}, as already outlined in the introduction.
Given a $\zz$ subshift $X\subset \Sigma^{\mathbb{Z}}$ and any $d \in \mathbb{N}$, let $X^{\otimes d} \subset
\Sigma^{\mathbb{Z}^{d}}$ denote the $\mathbb{Z}^{d}$ subshift defined
by
$$X^{\otimes d} = \{ x \in \Sigma^{\mathbb{Z}^{d}} ~:~ \forall g \in \mathbb{Z}^{d} \ \forall i \in \{1,\ldots,d\}, \ x_{g + \mathbb{Z} e_i} \in X\},$$
where $x_{g + \mathbb{Z} e_i} \in \Sigma^\mathbb{Z}$ is the sequence obtained by shifting $x$ by $g$ and projecting it along the $i$th direction. $X^{\otimes d}$ is called the $d$th axial power of $X$.

Similarly, we define $\Xinf \subset \Sigma^{\mathbb{Z}^{\infty}}$ by
$$X^{\otimes \infty} = \{ x \in \Sigma^{\mathbb{Z}^{\infty}} ~:~ \forall g \in \mathbb{Z}^{\infty} \ \forall i \in \mathbb{N}, \ x_{g + \mathbb{Z} e_i} \in X\}.$$

It is clear that $X^{\otimes d}$ is always a $\mathbb{Z}^{d}$ subshift, since it is closed w.r.t. the product topology on
$\Sigma^{\zz^{d}}$, and is invariant with respect to the shift $\mathbb{Z}^{d}$-action on $\Sigma^{\mathbb{Z}^{d}}$.


The \textit{topological entropy} of a $\mathbb{Z}^d$ subshift $X$ ($d \in \mathbb{N} \cup \{\infty\}$) is defined as
$$h(X) = \lim_{n\to \infty}\frac{1}{|F_n|}\log |\mathcal{L}(X,F_n)|.$$

We define the \textit{limiting entropy} of a $\mathbb{Z}$ subshift $X$ as
\[
h_{\infty}(X) := \lim_{d \rightarrow \infty} h(X^{\otimes d}).
\]

The limit exists because $h(X^{\otimes d})$ is nonincreasing in $d$; see \cite{LMP} for a proof.

The next few definitions involve measures on $\mathbb{Z}^d$ subshifts ($d \in \mathbb{N} \cup \{\infty\}$). All such measures will be taken to be Borel probability measures with respect to the product topology. We say that $\mu$ is \textit{shift-invariant} if $\sigma_v \mu = \mu$ for all $v \in \zz^d$, 

For a shift-invariant measure $\mu$ on
$X$, the \textit{measure-theoretic entropy} of $\mu$ is
$$h(\mu):= \lim_{k \to \infty}\frac{1}{|F_n|}H_{\mu}(F_n),$$
where
$$H_{\mu}(F_n):=
-\int \log\mu([x_{F_n}])d\mu(x)$$
 is the
Shannon entropy of the discrete random variable $x_{F_n}$.

The above definitions and statements about entropy can be defined for any amenable group and are all independent of the particular choice of F\o lner sequence; for more information about entropy in this general setting, see \cite{ollagnier85}.

The following fundamental theorem of entropy theory generalizes to any countable amenable group acting by homeomorphisms on a compact space, and beyond.
For simplicity, we state it only for $\mathbb{Z}^d$ subshifts, where $d \in \mathbb{N} \cup \{\infty\}$, which is sufficient for the purposes of this paper:

\begin{theorem*}[\textbf{Variational principle
}]
For any $d \in \mathbb{N}\cup\{\infty\}$ and $\zz^d$ subshift $X$,
$$h(X) = \sup_\mu h(\mu),$$
where the supremum on the right-hand side is over all
shift-invariant measures on $X$.
\end{theorem*}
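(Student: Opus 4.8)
This is the classical variational principle. Since each group involved---$\zz^d$ for finite $d$, and $\zz^\infty = \bigoplus_{\nn}\zz$---is a countable amenable group acting by homeomorphisms on the compact metrizable space $\Sigma^{\zz^d}$, the statement is a special case of the general amenable-group variational principle, and the plan is ultimately to invoke that result from \cite{ollagnier85}. For completeness I would first reproduce the standard two-inequality argument, which is a little cleaner in the subshift setting than for general topological dynamical systems because the natural generating partition (into cylinders of a fixed finite shape) is clopen, so no boundary terms need to be controlled.

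For the easy inequality $\sup_\mu h(\mu) \le h(X)$: fix a shift-invariant measure $\mu$ on $X$ and a F\o lner set $F_n$. The discrete random variable $x \mapsto x_{F_n}$ is supported on the finite set $\LL(X,F_n)$, so $H_\mu(F_n) \le \log|\LL(X,F_n)|$, since Shannon entropy on a finite set is maximized by the uniform distribution. Dividing by $|F_n|$ and letting $n \to \infty$ gives $h(\mu) \le h(X)$; then take the supremum over $\mu$.

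For the reverse inequality $h(X) \le \sup_\mu h(\mu)$: for each $n$ and each $w \in \LL(X,F_n)$ pick a point $x^{(w)}$ in the nonempty cylinder $[w] \subseteq X$, and set $\lambda_n := |\LL(X,F_n)|^{-1}\sum_{w} \delta_{x^{(w)}}$, so that the coordinate-restriction of $\lambda_n$ to $F_n$ is exactly uniform on $\LL(X,F_n)$ and $H_{\lambda_n}(F_n) = \log|\LL(X,F_n)|$. Symmetrize over the F\o lner set, $\mu_n := |F_n|^{-1}\sum_{g \in F_n} \sigma_g\lambda_n$, and pass to a weak-$*$ convergent subsequence $\mu_n \to \mu$ (possible by compactness of the space of Borel probability measures on $\Sigma^{\zz^d}$); the F\o lner property forces the limit $\mu$ to be shift-invariant, and it is clearly supported on the closed set $X$. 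To show $h(\mu) \ge h(X)$ one uses that cylinders are clopen, so $\nu \mapsto H_\nu(S)$ is weak-$*$ continuous for every fixed finite shape $S$: for fixed $m$ this gives $|F_m|^{-1}H_\mu(F_m) = \lim_n |F_m|^{-1}H_{\mu_n}(F_m)$, and the classical counting lemma---for $\zz^d$ a matter of tiling a large cube $F_n$ by translates of the small cube $F_m$---bounds the right-hand side below by $\limsup_n |F_n|^{-1}\log|\LL(X,F_n)| = h(X)$, up to an error vanishing as $m \to \infty$.

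The step I expect to be the genuine obstacle is the $d = \infty$ case. The F\o lner sets $F^{(\infty)}_n = [-n,n]^n \times \{0\}^\infty$ increase in rank with $n$, so the elementary ``tile a big cube by small cubes'' move that makes the counting lemma transparent in $\zz^d$ is not available as stated; the subadditivity/comparison step has to be run through the Ornstein--Weiss quasi-tiling machinery for general amenable groups. For this reason the efficient route is to cite the general amenable-group variational principle directly, checking only the two routine facts that $\zz^\infty$ is amenable (witnessed by the F\o lner sequence above) and that it acts by homeomorphisms on the compact metric space $\Sigma^{\zz^\infty}$.
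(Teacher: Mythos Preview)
Your proposal is correct and matches the paper's approach: the paper does not prove this theorem at all but simply cites the general amenable-group variational principle from \cite{ollagnier85} (and \cite{Mi} for finite $d$), exactly as you ultimately recommend. Your additional sketch of the two-inequality argument and your identification of the Ornstein--Weiss quasi-tiling issue for $d=\infty$ go beyond what the paper supplies, but the bottom line is the same.
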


For a proof of the variational principle in the general amenable group setting, see \cite{ollagnier85}. For an elegant $\mathbb{Z}^d$-proof ($d <\infty$), see \cite{Mi}.

Since the shift $\zz^d$-action on the $d$-dimensional full shift
is always \emph{expansive}, the function $\mu \mapsto h(\mu)$ is \emph{upper semi-continuous} for measures on
any $\mathbb{Z}^d$ subshift. It follows  that the
supremum on the right hand side is attained by at least one
measure $\mu$. For proofs and details see \cite{bowen_entropy_expansive}, for instance.

A measure $\mu$ for which $h(X) =  h(\mu)$ is called a \emph{measure of maximal entropy} on $X$.

We also need the notion of conditional entropy. For any finite $F \subset \zz^d$, $d \in \mathbb{N} \cup \{\infty\}$,
any measure $\mu$ on the $d$-dimensional full shift, and any $\sigma$-algebra $\mathcal{A}$ which is measurable with
respect to $\mu$, we define
\[
H_{\mu}(F \ | \ \mathcal{A}) := - \int \log \mu([x_F] \ | \ \mathcal{A}) \ d\mu(x),
\]
and correspondingly define the {\it conditional entropy of $\mu$ with respect to $\mathcal{A}$} as
\[
h(\mu \ | \ \mathcal{A}) := \lim_{k \rightarrow \infty} \frac{1}{|F_n|} H_{\mu}(F_n \ | \ \mathcal{A}).
\]

A standard application of the definition of conditional entropy shows that we can also write
\[
h(\mu \ | \ \mathcal{A}) = \lim_{k \rightarrow \infty} \int \frac{1}{|F_n|} H_{\mu\mid\mathcal{A}}(F_n) \ d\mu(x),
\]
where $\mu \mid \mathcal{A}$ denotes the conditional measure of $\mu$ given $\mathcal{A}$.

Given two $\mathbb{Z}^d$ subshifts $X \subset \Sigma_1^{\mathbb{Z}^d}$ and $Y \subset \Sigma_2^{\mathbb{Z}^d}$, a (topological) \emph{factor map} is a surjective continuous map $\pi:X \to Y$ for which $\sigma_v \circ \pi = \pi \circ \sigma_v$ for all $v \in \mathbb{Z}^d$. Similarly, when $\mu$ and $\nu$ are shift-invariant measures on $X$ and $Y$ respectively, a \emph{measure-theoretic factor map} from $(X,\mu)$ to $(Y,\nu)$ is a map $\pi:X \to Y$ which is $\mu$-measurable, which projects the measure $\mu$ to $\nu$, and for which $\sigma_v \circ \pi = \pi \circ \sigma_v$ $\mu$-a.e. for all $v \in \mathbb{Z}^d$.
We will most often use conditional entropy in the case where $\mathcal{A}$ is the image of the underlying $\sigma$-algebra for the measure space under a $\mu$-measurable factor map $\pi$; in this case, we will write the factor $\pi$ in lieu of $\mathcal{A}$.

The following two well-known results relate non-conditional and conditional entropies.

\begin{theorem*}\label{abramov}{\bf (Rokhlin-Abramov formula \cite{ward_amenable_abramov_formula})}
For any shift-invariant measure $\mu$ on a $\zz^d$ subshift $X$ {\rm(}$d \in \mathbb{N} \cup \{\infty\}${\rm)} and any $\mu$-measurable factor map $\pi$ on $X$,
\[
h(\mu \ | \ \pi) = h(\mu) - h(\pi(\mu)).
\]
\end{theorem*}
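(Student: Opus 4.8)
The plan is to reduce all three quantities to entropies of partitions and then carry out the classical entropy-addition argument. Since $X \subset \Sigma_1^{\zz^d}$ and its factor $Y \subset \Sigma_2^{\zz^d}$ are subshifts, the partition $\mathcal{P}$ of $X$ into the cylinders $[a]$ ($a \in \Sigma_1$) determined by the coordinate at $0$, and the analogous partition $\mathcal{Q}$ of $Y$, are measurable generators for the two shift actions. For finite $S \subset \zz^d$ write $\mathcal{P}_S := \bigvee_{v \in S} \sigma_{-v}\mathcal{P}$ for the partition of $X$ into cylinders of shape $S$ (atoms indexed by $\LL(X,S)$), so that $H_\mu(S) = H_\mu(\mathcal{P}_S)$, and similarly for $Y$. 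Put $\mathcal{R} := \pi^{-1}\mathcal{Q}$, a finite $\mu$-measurable partition of $X$. Because $\mathcal{Q}$ generates $Y$ and $\pi \circ \sigma_v = \sigma_v \circ \pi$ $\mu$-a.e., the factor $\sigma$-algebra $\mathcal{A} := \pi^{-1}(\mathcal{B}_Y)$ — the $\sigma$-algebra to which $h(\mu \mid \pi)$ refers — agrees $\mu$-a.e. with $\bigvee_{v \in \zz^d} \sigma_{-v}\mathcal{R}$; moreover $\mathcal{R}_S = \pi^{-1}(\mathcal{Q}_S)$, so $\frac{1}{|F_n|} H_\mu(\mathcal{R}_{F_n}) = \frac{1}{|F_n|} H_\nu(\mathcal{Q}_{F_n}) \to h(\nu)$, writing $\nu := \pi(\mu)$.

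The first step is the exact chain-rule identity $H_\mu(\mathcal{P}_{F_n} \vee \mathcal{R}_{F_n}) = H_\mu(\mathcal{R}_{F_n}) + H_\mu(\mathcal{P}_{F_n} \mid \mathcal{R}_{F_n})$. Divide by $|F_n|$ and let $n \to \infty$: the left-hand side tends to $h(\mu)$ because $\mathcal{P} \vee \mathcal{R}$ refines the generator $\mathcal{P}$ and is therefore itself a generator, and the first term on the right tends to $h(\nu)$ by the previous paragraph. Hence $\delta := \lim_n \frac{1}{|F_n|} H_\mu(\mathcal{P}_{F_n} \mid \mathcal{R}_{F_n})$ exists and equals $h(\mu) - h(\nu)$, and it remains to show $\delta = h(\mu \mid \mathcal{A})$.

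One inequality is immediate: $\mathcal{R}_{F_n}$ is coarser than $\mathcal{A}$, so $H_\mu(\mathcal{P}_{F_n} \mid \mathcal{A}) \le H_\mu(\mathcal{P}_{F_n} \mid \mathcal{R}_{F_n})$, giving $h(\mu \mid \mathcal{A}) \le \delta$ after dividing by $|F_n|$. The reverse inequality is the crux, and says that asymptotically it costs nothing to condition on all of $\mathcal{A}$ rather than only on the factor coordinates inside $F_n$. Given $\epsilon > 0$, first pick $m$ with $\frac{1}{|F_m|} H_\mu(\mathcal{P}_{F_m} \mid \mathcal{A}) < h(\mu \mid \mathcal{A}) + \epsilon$, then — using the reverse martingale convergence theorem and $\mathcal{A} = \bigvee_E \mathcal{R}_E$ (the join over finite $E \subset \zz^d$) — pick a finite $E$ with $\frac{1}{|F_m|} H_\mu(\mathcal{P}_{F_m} \mid \mathcal{R}_E) < h(\mu \mid \mathcal{A}) + 2\epsilon$. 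For large $n$, cover $F_n$ by pairwise disjoint translates $v_j + F_m$ sitting far enough inside $F_n$ that every $v_j + E \subseteq F_n$, leaving uncovered only an $o(|F_n|)$ fraction of sites; this is possible precisely because $(F_n)$ is a F\o lner sequence. Then by subadditivity of conditional entropy, shift-invariance of $\mu$, monotonicity in the conditioning $\sigma$-algebra, and the crude bound $\log |\Sigma_1|$ per uncovered site,
\[
H_\mu(\mathcal{P}_{F_n} \mid \mathcal{R}_{F_n}) \le \sum_j H_\mu(\mathcal{P}_{v_j + F_m} \mid \mathcal{R}_{v_j + E}) + o(|F_n|) \le |F_n|(h(\mu \mid \mathcal{A}) + 2\epsilon) + o(|F_n|),
\]
so $\delta \le h(\mu \mid \mathcal{A}) + 2\epsilon$; letting $\epsilon \to 0$ finishes the argument. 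The one genuinely delicate point is the covering of $F_n$ by interior translates of $F_m$ with room to spare for $E$: for $\zz^d$ this is an elementary grid construction, but for $\zz^\infty$ — and for general countable amenable groups — it calls for quasi-tiling techniques. Indeed, in that generality the identity is precisely the theorem of \cite{ward_amenable_abramov_formula}, which for $d = \infty$ may be cited directly, the remaining facts about entropy over amenable groups being standard (see \cite{ollagnier85}).
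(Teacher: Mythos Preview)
The paper does not prove this statement at all: it is quoted as a known result, with the citation \cite{ward_amenable_abramov_formula} (and background from \cite{ollagnier85}) serving as the proof. Your write-up is therefore not being compared against any argument in the paper.

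That said, what you have written is a correct outline of the classical proof. The chain-rule decomposition $H_\mu(\mathcal{P}_{F_n}\vee\mathcal{R}_{F_n}) = H_\mu(\mathcal{R}_{F_n}) + H_\mu(\mathcal{P}_{F_n}\mid\mathcal{R}_{F_n})$, the identification of the first two limits via the Kolmogorov--Sinai theorem for generating partitions, and the two-sided comparison of $\delta$ with $h(\mu\mid\mathcal{A})$ are all standard. The only step that genuinely requires care, as you yourself note, is the F\o lner covering argument for $\delta \le h(\mu\mid\mathcal{A})$; for the specific F\o lner sequences used in the paper ($F_n^{(d)} = [-n,n]^d$ and $F_n^{(\infty)} = [-n,n]^n \times \{0\}^\infty$) this can be done by an explicit grid tiling rather than by invoking the full Ornstein--Weiss quasi-tiling machinery, so the appeal to \cite{ward_amenable_abramov_formula} at the end is convenient but not strictly necessary. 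One small point worth making explicit: your claim that $\lim_n \frac{1}{|F_n|} H_\mu((\mathcal{P}\vee\mathcal{R})_{F_n}) = h(\mu)$ relies on the amenable Kolmogorov--Sinai theorem (any finite generator computes the entropy), which is itself a nontrivial input at the same level as the theorem you are proving; in the paper's spirit this too would be covered by the citation to \cite{ollagnier85}.
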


\begin{theorem*}\label{past}{\rm (\bf Entropy via lexicographic past, see, for instance, p. 318 of \cite{georgii_book})}
For any $d \in \mathbb{N} \cup \{\infty\}$, denote by $P_d$ the lexicographic past of $0$ in $\zz^d$,
i.e. the set of all $g \in \zz^d$ which have at least one nonzero coordinate, the first of which is negative.
Then for any shift-invariant measure $\mu$ on $\Sigma^{\zz^d}$, 
$h(\mu) = H_{\mu}(\{0\} \ | \ \pi_{P_d})$. 
\end{theorem*}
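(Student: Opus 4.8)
The plan is to derive the identity from the chain rule for Shannon entropy, filling up the F\o lner sets $F_n$ one site at a time in lexicographic order. Since the statement is an equality, I would prove the two inequalities $h(\mu) \ge H_\mu(\{0\} \ | \ \pi_{P_d})$ and $h(\mu) \le H_\mu(\{0\} \ | \ \pi_{P_d})$ separately, treating $d \in \mathbb{N}$ and $d = \infty$ by one and the same argument.

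For the lower bound, fix $n$ and list the finitely many sites of $F_n$ in increasing lexicographic order. For $g \in F_n$ the set of sites of $F_n$ lying strictly below $g$ is exactly $F_n \cap (g + P_d)$, since $g' <_{\mathrm{lex}} g$ means precisely $g' - g \in P_d$. The chain rule then gives
$$H_\mu(F_n) = \sum_{g \in F_n} H_\mu\bigl(\{g\} \ \big| \ x_{F_n \cap (g + P_d)}\bigr),$$
and by shift-invariance each summand equals $H_\mu\bigl(\{0\} \ \big| \ x_{(F_n - g) \cap P_d}\bigr)$. As $(F_n - g) \cap P_d \subseteq P_d$, conditioning on a smaller $\sigma$-algebra does not decrease conditional entropy, so every summand is at least $H_\mu(\{0\} \ | \ \pi_{P_d})$; dividing by $|F_n|$ and letting $n \to \infty$ gives $h(\mu) \ge H_\mu(\{0\} \ | \ \pi_{P_d})$.

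For the reverse inequality I would first recall that $\pi_{P_d}$ is generated by the increasing family of finite sub-$\sigma$-algebras $\sigma(x_S)$ over finite $S \subset P_d$ (when $d = \infty$, exhaust the countable set $P_\infty$ by an increasing sequence of finite subsets). Applying the increasing martingale convergence theorem to the conditional probabilities $\mu([x_0 = a] \ | \ x_S)$, together with continuity and boundedness of $t \mapsto -t\log t$ on $[0,1]$, yields $H_\mu(\{0\} \ | \ x_S) \downarrow H_\mu(\{0\} \ | \ \pi_{P_d})$ as $S \uparrow P_d$. So, given $\varepsilon > 0$, fix a finite $S \subset P_d$ with $H_\mu(\{0\} \ | \ x_S) < H_\mu(\{0\} \ | \ \pi_{P_d}) + \varepsilon$. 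For each $g \in F_n$ with $g + S \subseteq F_n$ we have $S \subseteq (F_n - g) \cap P_d$, hence the corresponding summand in the chain-rule identity is at most $H_\mu(\{0\} \ | \ x_S) < H_\mu(\{0\} \ | \ \pi_{P_d}) + \varepsilon$; for the remaining $g$ use the crude bound $\log|\Sigma|$. The set of remaining $g$ has size at most $\sum_{s \in S} |F_n \triangle (F_n - s)| = o(|F_n|)$ by the F\o lner property, so $\frac{1}{|F_n|} H_\mu(F_n) \le H_\mu(\{0\} \ | \ \pi_{P_d}) + \varepsilon + o(1)$; letting $n \to \infty$ and then $\varepsilon \to 0$ finishes the argument.

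The only real subtlety, and the step I would be most careful about, is the $d = \infty$ case: one must check that the lexicographic order restricted to each finite $F_n$ behaves just as in finite dimensions, that $P_\infty$ is countable and genuinely generates a ``past'' $\sigma$-algebra, and that for the F\o lner sequence $F^{(\infty)}_n = [-n,n]^n \times \{0\}^\infty$ the number of ``bad'' sites is negligible --- which holds because any finite $S \subset P_\infty$ involves only finitely many coordinates, so $|F^{(\infty)}_n \setminus \bigcap_{s \in S}(F^{(\infty)}_n - s)| = o(|F^{(\infty)}_n|)$ exactly as in finite dimensions. The martingale/continuity identification of $H_\mu(\{0\} \ | \ \pi_{P_d})$ as $\inf_S H_\mu(\{0\} \ | \ x_S)$ over finite $S \subset P_d$ is standard, but is worth stating explicitly since it is what drives the upper bound.
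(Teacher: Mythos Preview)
Your proposal is correct and matches the approach the paper itself sketches: the paper does not give a full proof but says the idea is to ``decompose the probability of such an intersection as a product of conditional probabilities (using Bayes's rule) and apply the Martingale convergence theorem,'' which is exactly your chain-rule-plus-martingale argument. Your treatment of the $d=\infty$ case, checking that the F\o lner property and the countability of $P_\infty$ make the finite-dimensional proof go through unchanged, is precisely the extension the paper notes is ``easily'' done but does not write out.
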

\noindent
(Here and elsewhere, for a set $S \subseteq \zz^d$, $\pi_S$ represents the projection map $x \mapsto x_S$.)

In the case $d=1$ this is a classical result. The proof of the above result in \cite{georgii_book} formally applies only to the case $d < \infty$,
but is easily extended to the $d = \infty$ case.
Though we will not recreate the proof here, the idea is to think of an arbitrary cylinder set
on a large configuration as an intersection of events, and then to decompose the probability of
such an intersection as a product of conditional probabilities (using Bayes's rule) and apply the Martingale convergence theorem.


We will naturally identify $\zz^{d_1}$ as a subgroup of $\zz^{d_2}$
whenever $d_1 < d_2 \le \infty$ via the embedding $(v_1,\ldots,v_{d_1}) \mapsto (v_1,\ldots,v_{d_1},0,0,\ldots )$, and say that a sequence of measures
$\mu_n \in \mathbb{P}(\Sigma^{\zz^n})$ converges in the weak-$*$ topology
to a measure $\mu$ in $\mathbb{P}(\Sigma^{\zz^{\infty}})$ if
$\mu_n([w_F]) \to \mu([w_F])$ for all finite $F \subset
\zz^{\infty}$ and all $w_F \in \Sigma^F$.

We complete the section with two lemmas demonstrating connections between $\Xinf$ and the problem of limiting entropy, for which
we first need a preliminary result about legal configurations in $\Xinf$.

\begin{lemma}\label{samelang}
For any $\zz$ subshift $X$, any $d \in \mathbb{N}$, and any finite $S \subseteq \zz^d$, $\LL(X^{\otimes d},S) = \LL(X^{\otimes \infty},S)$.
\end{lemma}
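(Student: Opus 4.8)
The plan is to prove the two inclusions separately, in each case by an explicit construction of a point realizing the relevant configuration; throughout I identify $S \subseteq \mathbb{Z}^d$ with its image in $\mathbb{Z}^\infty$ under the standard embedding.

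\emph{The inclusion $\mathcal{L}(X^{\otimes \infty},S) \subseteq \mathcal{L}(X^{\otimes d},S)$ is essentially immediate.} Given $x \in X^{\otimes \infty}$ with $x_S = w$, I would restrict $x$ to the subgroup $\mathbb{Z}^d$, i.e. set $y_v := x_{(v_1,\dots,v_d,0,0,\dots)}$ for $v \in \mathbb{Z}^d$. For each $g \in \mathbb{Z}^d$ and $1 \le i \le d$, the row $y_{g + \mathbb{Z} e_i}$ coincides with the row $x_{g + \mathbb{Z} e_i}$ of $x$ (viewing $g \in \mathbb{Z}^d \subseteq \mathbb{Z}^\infty$), hence lies in $X$ because $x \in X^{\otimes \infty}$. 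Thus $y \in X^{\otimes d}$, and $y_S = x_S = w$, so $w \in \mathcal{L}(X^{\otimes d},S)$.

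\emph{For the reverse inclusion} one is handed $y \in X^{\otimes d}$ with $y_S = w$ and must build $x \in X^{\otimes \infty}$ with $x_S = w$. The tempting choice of making $x$ constant along all coordinate directions beyond the $d$-th does not work: it would require constant sequences $a^{\mathbb{Z}}$ to belong to $X$, which can fail (take $a = 1$ for the golden mean shift). Instead I would \emph{fold} the auxiliary coordinates onto the first coordinate of $y$: define $\phi \colon \mathbb{Z}^\infty \to \mathbb{Z}^d$ by $\phi(v) := \big(v_1 + \sum_{j > d} v_j,\ v_2,\ v_3,\ \dots,\ v_d\big)$, a finite sum since $v$ has finitely many nonzero entries, and set $x_v := y_{\phi(v)}$. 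For $v \in S$ every coordinate with index exceeding $d$ vanishes, so $x_v = y_v = w_v$ and hence $x_S = w$. To check $x \in X^{\otimes \infty}$, fix $v \in \mathbb{Z}^\infty$ and $i \in \mathbb{N}$ and observe that $\phi(v + t e_i) = \phi(v) + t e_i$ when $2 \le i \le d$, while $\phi(v + t e_i) = \phi(v) + t e_1$ when $i = 1$ or $i > d$ (in the latter case because increasing $v_i$ increases the tail sum by the same amount). Consequently the row $x_{v + \mathbb{Z} e_i}$ is either the direction-$i$ row of $y$ through $\phi(v)$ or the direction-$1$ row of $y$ through $\phi(v)$; in both cases it is a row of $y \in X^{\otimes d}$, hence in $X$. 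Therefore $x \in X^{\otimes \infty}$ and $w \in \mathcal{L}(X^{\otimes \infty},S)$.

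I do not anticipate a genuine obstacle; the only step requiring a moment's thought is the second inclusion, where one must notice that a plain product-type extension is inadequate and that routing the extra dimensions through dimension $1$ of $y$ repairs this. A softer alternative would be to prove $\mathcal{L}(X^{\otimes d},S) \subseteq \mathcal{L}(X^{\otimes (d+1)},S)$, iterate in $d$, and then pass to $X^{\otimes \infty}$ via a compactness/diagonalization argument; but the explicit folding map makes the limiting step unnecessary and keeps the argument entirely elementary.
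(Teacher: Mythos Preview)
Your proof is correct and follows essentially the same approach as the paper: the paper also proves the nontrivial inclusion by pulling back along a folding map $\mathbb{Z}^\infty \to \mathbb{Z}^d$, the only cosmetic difference being that it collapses the extra coordinates onto the $d$th coordinate (defining $x'_g := x_{(g_1,\dots,g_{d-1},\sum_{i\ge d} g_i)}$) rather than onto the first. Your discussion of why a constant extension fails and of the alternative inductive/compactness route is a nice bonus.
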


\begin{proof}
The inclusion $\LL(X^{\otimes \infty},S) \subseteq \LL(X^{\otimes d},S)$ is trivial since the projection $x_{\mathbb{Z}^d}$ is clearly in $X^{\otimes d}$ for any $x \in X^{\otimes \infty}$. It therefore suffices to show the reverse inclusion.

Consider any $w \in \LL(X^{\otimes d},S)$. By definition, there exists $x \in X^{\otimes d}$ so that $x_S = w$. We will use $x$ to construct a point of $X^{\otimes \infty}$. Define $x' \in \Sigma^{\zz^{\infty}}$ by $\displaystyle x'_g := x_{(g_1, g_2, \ldots, g_{d-1}, \sum_{i=d}^{\infty} g_i)}$. Clearly $x'_{\zz^d} = x$, so $x'_S = w$. Also, for any $g \in \zz^{\infty}$ and $i \in \mathbb{N}$, the row $x'_{g + \zz e_i}$ is a row of $x$; either in the $x_i$-direction if $i < d$ or in the $x_d$-direction if $i \geq d$. Since $x \in X^{\otimes d}$, all such rows are in $X$, and so $x' \in X^{\otimes \infty}$. We have then shown that $w \in \LL(X^{\otimes \infty},S)$, completing the proof.

\end{proof}


\begin{lemma}\label{lem:hinf_hXinf}
$$h(\Xinf)=h_{\infty}(X).$$
\end{lemma}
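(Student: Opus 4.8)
The plan is to reduce both $h(\Xinf)$ and $h_\infty(X)$ to counting legal configurations on finite boxes and then run a diagonal argument. For $d\in\nn$ put $a_{d,n}:=\frac{1}{(2n+1)^d}\log|\LL(X^{\otimes d},[-n,n]^d)|$; since $|F^{(d)}_n|=(2n+1)^d$, the definition of topological entropy gives $h(X^{\otimes d})=\lim_{n\to\infty}a_{d,n}$ for every $d\in\nn$. The point of Lemma~\ref{samelang} is that it lets us record $h(\Xinf)$ in the same terms: because $F^{(\infty)}_n=[-n,n]^n\times\{0\}^\infty$ is supported on the canonical copy of $\zz^n$ inside $\zz^\infty$, Lemma~\ref{samelang} gives $|\LL(\Xinf,F^{(\infty)}_n)|=|\LL(X^{\otimes n},[-n,n]^n)|$, and hence $h(\Xinf)=\lim_{n\to\infty}a_{n,n}$ — the \emph{diagonal} of the array $(a_{d,n})_{d,n}$ whose columns converge to $h(X^{\otimes d})$.

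The proof then rests on two monotonicity properties of this array. First, for each fixed $n$ the sequence $d\mapsto a_{d,n}$ is non-increasing: if $x\in X^{\otimes(d+1)}$, then for each value $j\in[-n,n]$ of the last coordinate the corresponding slice of $x$ is, regarded as an element of $\Sigma^{\zz^d}$, a point of $X^{\otimes d}$ (its rows along directions $1,\dots,d$ are rows of $x$, hence lie in $X$), so it restricts to a configuration in $\LL(X^{\otimes d},[-n,n]^d)$; since $x_{[-n,n]^{d+1}}$ is the concatenation of these $2n+1$ slices, $|\LL(X^{\otimes(d+1)},[-n,n]^{d+1})|\le|\LL(X^{\otimes d},[-n,n]^d)|^{2n+1}$, which is exactly $a_{d+1,n}\le a_{d,n}$. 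Second, the standard sub-multiplicativity of box counts (Fekete's lemma applied one coordinate at a time) gives $a_{d,n}\ge h(X^{\otimes d})$ for every $n$ — that is, the entropy is the infimum, not merely the limit, of the box averages. With these two facts the diagonal is pinned down: when $n\ge d$ we have $a_{n,n}\le a_{d,n}$, so $\limsup_n a_{n,n}\le\lim_n a_{d,n}=h(X^{\otimes d})$ for every $d$, and since $d\mapsto h(X^{\otimes d})$ is non-increasing this yields $\limsup_n a_{n,n}\le h_\infty(X)$; conversely $a_{n,n}\ge h(X^{\otimes n})\ge h_\infty(X)$ for every $n$, so $\liminf_n a_{n,n}\ge h_\infty(X)$. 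As $h(\Xinf)=\lim_n a_{n,n}$, the two bounds collapse, giving $h(\Xinf)=h_\infty(X)$.

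None of the individual steps is hard; the single load-bearing observation is that Lemma~\ref{samelang} recasts $h(\Xinf)$ as the diagonal $\lim_n a_{n,n}$, after which the slicing inequality (monotonicity in $d$) and the Fekete inequality (box averages dominate the entropy) squeeze this diagonal between two quantities both equal to $h_\infty(X)$. The one place calling for a little care is that the lower bound genuinely uses the sharp inequality $a_{d,n}\ge h(X^{\otimes d})$ and not merely the existence of $\lim_n a_{d,n}$; equivalently, one may run the whole argument using $h(X^{\otimes d})=\inf_n a_{d,n}$ and chase infima throughout.
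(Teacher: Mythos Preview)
Your proof is correct. Both your argument and the paper's use Lemma~\ref{samelang} to identify configurations in $\Xinf$ on $F^{(\infty)}_n$ with configurations in $X^{\otimes n}$ on $[-n,n]^n$, but the routes diverge from there. The paper extracts an increasing sequence $(N_d)$ so that $a_{d,N_d}\to h_\infty(X)$, observes that $(F^{(d)}_{N_d})_d$ is itself a F{\o}lner sequence in $\zz^\infty$, and then invokes the general fact that topological entropy is independent of the F{\o}lner sequence to conclude $h(\Xinf)=\lim_d a_{d,N_d}$ as well. You instead stay on the fixed diagonal $a_{n,n}$ dictated by the paper's chosen F{\o}lner sequence and squeeze it between two explicit inequalities: the slicing bound $a_{d+1,n}\le a_{d,n}$ and the subadditivity bound $a_{d,n}\ge h(X^{\otimes d})$. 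Your version is more self-contained, since it avoids any appeal to F{\o}lner-independence of entropy beyond the definition already fixed in the paper, at the modest price of proving those two auxiliary inequalities; the paper's version is shorter but leans on the general amenable-group entropy machinery it cites.
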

\begin{proof}
By definition, we have
$$h_\infty(X)=\lim_{d \to \infty}h(X^{\otimes d}).$$
We have
$$h(X^{\otimes d})=\lim_{N \to \infty}\frac{1}{|F_N^{(d)}|} \log |\mathcal{L}(X^{\otimes d},F_N^{(d)})|.$$
Thus, we can find an increasing sequence of integers $(N_1,N_2,\ldots)$, so that
$$h_{\infty}(X) = \lim_{d \to \infty}\frac{1}{|F_{N_d}^{(d)}|} \log |\mathcal{L}(X^{\otimes d},F_{N_d}^{(d)})|.$$
As $F^{(d)}_{N_d}$ is a F{\o}lner sequence in $\zz^{\infty}$, we also have
$$h(\Xinf) =  \lim_{d \to \infty}\frac{1}{|F_{N_d}^{(d)}|} \log |\mathcal{L}(\Xinf,F_{N_d}^{(d)})|.$$
By Lemma~\ref{samelang} above, $\mathcal{L}(\Xinf,F_{N_d}^{(d)}) =  \mathcal{L}(X^{\otimes
d},F_{N_d}^{(d)})$; 
it follows that $h_{\infty}(X) = h(\Xinf)$.
\end{proof}
\begin{lemma}\label{lem:weak_limit_Xinf}
Let $\mu_1,\mu_2,\ldots, \mu_n,\ldots$ be a sequence of measures
where each $\mu_n$ is a measure on $X^{\otimes n}$.
\begin{enumerate}
  \item{If 
$\mu_n \rightarrow \mu$ in the weak-$*$ topology, where $\mu$ is a measure on $\Sigma^{\zz^{\infty}}$,
   then
  $\mu(\Xinf)=1$.}
  \item{If, in addition, every $\mu_n$ is a measure of maximal entropy of $X^{\otimes
  n}$, then 
$\mu$ is a measure of maximal
  entropy of $\Xinf$.}
\end{enumerate}
\end{lemma}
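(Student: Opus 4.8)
\textit{Proof proposal.} The plan is to treat the two parts separately; part (1) is soft, while part (2) needs a mildly delicate entropy estimate. For part (1), observe that $\Sigma^{\zz^{\infty}} \setminus \Xinf$ is a countable union of cylinder sets $[w]$ with $w \notin \LL(\Xinf)$, so it suffices to prove that $\mu([w]) = 0$ for every such $w$. Given such a $w$, with finite shape $S$, choose $d$ with $S \subseteq \zz^d$. By Lemma~\ref{samelang}, $w \notin \LL(\Xinf, S) = \LL(X^{\otimes n}, S)$ for every $n \geq d$, so $\mu_n([w]) = 0$ for every $n \geq d$ (since $\mu_n$ is a measure on $X^{\otimes n}$), and weak-$*$ convergence gives $\mu([w]) = \lim_n \mu_n([w]) = 0$.

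For part (2), I would first check that $\mu$ is a shift-invariant measure on $\Xinf$: each $\mu_n$ is $\zz^n$-shift-invariant (being a measure of maximal entropy), and any cylinder set is supported on finitely many coordinates, hence $\sigma_v$-invariant under $\mu_n$ for all large $n$ once $v \in \zz^{\infty}$ is fixed; passing to the weak-$*$ limit shows $\sigma_v \mu = \mu$, while part (1) shows $\mu(\Xinf) = 1$. By the variational principle $h(\mu) \leq h(\Xinf)$, so it remains to show $h(\mu) \geq h(\Xinf)$. By Lemma~\ref{lem:hinf_hXinf}, $h(\Xinf) = h_{\infty}(X)$, and since $h(X^{\otimes n})$ is nonincreasing in $n$ with limit $h_{\infty}(X)$, we have $h_{\infty}(X) \leq h(X^{\otimes n})$ for every $n$.

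To prove $h(\mu) \geq h_{\infty}(X)$ I would compute $h(\mu)$ via the F\o lner sequence $F_N = [-N,N]^N \times \{0\}^{\infty}$ in $\zz^{\infty}$, so that $h(\mu) = \lim_N |F_N|^{-1} H_{\mu}(F_N)$ with $|F_N| = (2N+1)^N$. Fix $N$. Since $F_N$ is supported on finitely many coordinates and $t \mapsto -t\log t$ is continuous, weak-$*$ convergence gives $H_{\mu}(F_N) = \lim_{n \to \infty} H_{\mu_n}(F_N)$, where for $n \geq N$ we view $F_N$ as the subset $[-N,N]^N \times \{0\}^{n-N}$ of $\zz^n$. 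Now $F_N$ tiles $\zz^n$ by the subgroup $(2N+1)\zz^N \times \zz^{n-N}$, so the standard subadditivity (tiling) estimate for entropy yields $h(\mu_n) \leq |F_N|^{-1} H_{\mu_n}(F_N)$ for the $\zz^n$-action. Since $\mu_n$ is a measure of maximal entropy of $X^{\otimes n}$, $h(\mu_n) = h(X^{\otimes n}) \geq h_{\infty}(X)$, whence $H_{\mu_n}(F_N) \geq |F_N|\, h_{\infty}(X)$; letting $n \to \infty$ gives $H_{\mu}(F_N) \geq |F_N|\, h_{\infty}(X)$, and then $N \to \infty$ gives $h(\mu) \geq h_{\infty}(X)$, as required.

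The step I expect to require the most care is the choice of argument in part (2): one is tempted to invoke upper semi-continuity of $\nu \mapsto h(\nu)$, but this does not apply directly, because $\mu_n$ is only $\zz^n$-invariant and so has no $\zz^{\infty}$-entropy to take a limit of. Freezing the finite-dimensional F\o lner set $F_N$, using that it tiles $\zz^n$ for every $n \geq N$, and only then letting $n \to \infty$ and afterwards $N \to \infty$, is the device that makes the estimate go through; the verifications that $F_N$ tiles $\zz^n$ and that $h(\mu_n) \leq |F_N|^{-1} H_{\mu_n}(F_N)$ are routine.
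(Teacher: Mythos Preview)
Your proof is correct. Part (1) is essentially identical to the paper's argument: both use Lemma~\ref{samelang} to see that each ``bad'' cylinder has $\mu_n$-measure zero for all large $n$, and pass to the limit.

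For part (2), the paper simply writes that the result ``follows by combining (1), the relation $h(\Xinf)=h_{\infty}(X)$ from Lemma~\ref{lem:hinf_hXinf}, and upper semi-continuity of measure-theoretic entropy.'' You are right to flag that this invocation is not literally the standard statement, since the $\mu_n$ are only $\zz^n$-invariant and hence do not sit in the space of $\zz^{\infty}$-invariant measures on which upper semi-continuity is usually formulated. Your argument---freezing a box $F_N$, using that it tiles $\zz^n$ for every $n\ge N$ to get $H_{\mu_n}(F_N)\ge |F_N|\,h(\mu_n)\ge |F_N|\,h_{\infty}(X)$, then letting $n\to\infty$ and afterwards $N\to\infty$---is precisely the computation that justifies the paper's shorthand. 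So the two proofs are the same in spirit; yours supplies the missing details, at the cost of a little more length, while the paper's one-line appeal to upper semi-continuity is faster but leaves the reader to unpack exactly the subtlety you identified.
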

\begin{proof}

To prove (1), choose any sequence $\mu_n$ which approaches a weak-$*$ limit $\mu$. Choose any finite $F \subset \zz^{\infty}$. For some sufficiently large $d_0$, $F \subset \zz^{d_0}$, by definition of $\zz^\infty$.
Again, by Lemma~\ref{samelang}, $\mathcal{L}(X^{\otimes d_0},F)=\mathcal{L}(X^{\otimes d},F)$
for all $d_0 \le d \le \infty$.
Thus, for all $d \ge d_0$, $\mu_d(x_F \in \mathcal{L}(X^{\otimes d},F))=1$.
It follows that
$\mu(\{x_F \in \mathcal{L}(X^{\otimes \infty},F)\})=1$. Thus,
$$ \mu(X^{\otimes \infty})=
\mu\left(\bigcap_{F \subset \zz^\infty}\{ x_F \in \mathcal{L}(X^{\otimes \infty},F)\} \right)=1,$$
where the intersection is over all finite $F \subset \zz^\infty$. This completes the proof of (1).

Now, (2) follows by combining (1), the relation $h(\Xinf)=h_{\infty}(X)$ from Lemma~\ref{lem:hinf_hXinf}, and upper semi-continuity of measure-theoretic entropy.
\end{proof}

\subsection{Multi-choice subshifts and independence entropy}

We now recall some definitions and results from \cite{LMP}. Let $\hat{\Sigma}$ denote the set of non-empty subsets of
$\Sigma$. Let $X$ be a $\zz^d$ subshift over $\Sigma$, where $d
\in \mathbb{N}\cup \{\infty\}$. The \emph{multi-choice shift}
$\widehat{X} \subset \hat{\Sigma}^{\zz^d}$ is the set
$$ \widehat{X} =  \{ \hat{x} \in \hat{\Sigma}^{\zz^d}~:~ \hat{x} \subset X\},$$
where $\hat{x} \in \hat{\Sigma}^{\zz^d}$ is naturally interpreted as
a subset of $X$, obtained by Cartesian products.
Note that any $\hat{x} \in \widehat{X}$ corresponds to an (infinite) ``independently legal'' point as described in the introduction.

The \emph{independence score} of a configuration $\hat{w} \in \hat{\Sigma}^F$ with shape $F$ is defined by
$$S(\hat{w}) = \frac{1}{|F|}\sum_{n \in F}\log |\hat{x}_n|.$$
We analogously define the independence score for $\hat{x} \in \hat{\Sigma}^{\zz^d}$ by
$$S(\hat{x})= \limsup_{n \to \infty}  S(\hat{x}_{F_n}).$$ 
Finally, we define the independence score of any
shift-invariant measure $\hat{\mu}$ on $\widehat{X}$ as
$$S(\hat{\mu}) = \int \log |\hat{x}_0| \ d\hat{\mu}(\hat{x}).$$

Observe that for any shift-invariant measure $\hat{\mu}$ and finite $F
\subset \zz^{d}$, $S(\hat{\mu})=\int S(\hat{x}_F) d\hat{\mu}(\hat{x})$. In
particular, since $\hat{x} \to S(\hat{x})$ is a function which
is invariant under shifts, it follows that for an ergodic measure
$\hat{\mu}$, $S(\hat{x})=S(\hat{\mu})$ $\hat{\mu}$-almost everywhere. Also, by the
pointwise ergodic theorem, the $\limsup$ in the definition of $S(\hat{x})$ is actually a
limit $\hat{\mu}$-almost surely.

Following \cite{LMP}, we define the \emph{independence entropy} $\hind(X)$ of a $\zz$ subshift $X$ as
$$\hind(X)=\lim_{n \to \infty}\left(\sup\{S(\hat{w})~:~  \hat{w} \in \mathcal{L}(\hat{X},F_n)\}\right).$$
See Section $4$ of \cite{LMP} for details on how existence of the
limit follows from sub-additivity. The above definition of $\hind(X)$ is a specific instance of a \emph{maximal ergodic average}, in this case on the compact space $\hat{X}$. The study of maximal ergodic averages goes by the name \emph{ergodic optimization}. The paper \cite{jenk} by O. Jenkinson establishes basic aspects of the theory.

The following lemma states convenient equivalent definitions of the
independence entropy:

\begin{lemma}\label{lem:hind_equiv}
Let $X$ be a $\mathbb{Z}^d$ subshift. The following are equivalent
definitions of the independence entropy $\hind$:
\begin{enumerate}

\item{$\hind(X)=\sup\{S(\hat{x})~:~ \hat{x} \in \hat{X}\}$.}
\item{$\hind(X)$ is equal to $\sup_{\hat{\mu}} S(\hat{\mu})$, where the supremum is over shift-invariant measures on $\hat{X}$.}

\end{enumerate}
\end{lemma}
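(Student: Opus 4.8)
The plan is to prove the two stated equalities by establishing the cyclic chain
\[
\hind(X) \ \le\ \sup_{\hat{\mu}} S(\hat{\mu}) \ \le\ \sup_{\hat{x} \in \hat{X}} S(\hat{x}) \ \le\ \hind(X),
\]
where $\hind(X)$ denotes the quantity defined above by the limit over F\o lner sets. Writing $a_n := \sup\{S(\hat{w}) : \hat{w} \in \mathcal{L}(\hat{X},F_n)\}$, so that $\hind(X) = \lim_n a_n$, we will use freely that $\hat{X}$ is itself a $\zz^d$ subshift over the finite alphabet $\hat{\Sigma}$, that $\hat{x} \mapsto \log|\hat{x}_0|$ is a bounded locally constant (hence continuous) function on $\hat{X}$, and that the set of shift-invariant Borel probability measures on the compact space $\hat{X}$ is weak-$*$ compact.

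Two of the three inequalities are immediate. For $\sup_{\hat{x}} S(\hat{x}) \le \hind(X)$: given $\hat{x} \in \hat{X}$, for every $n$ the restriction $\hat{x}_{F_n}$ belongs to $\mathcal{L}(\hat{X},F_n)$, so $S(\hat{x}_{F_n}) \le a_n$, and taking $\limsup$ in $n$ gives $S(\hat{x}) = \limsup_n S(\hat{x}_{F_n}) \le \lim_n a_n = \hind(X)$. For $\sup_{\hat{\mu}} S(\hat{\mu}) \le \sup_{\hat{x}} S(\hat{x})$: since $\hat{\mu} \mapsto S(\hat{\mu}) = \int \log|\hat{x}_0|\,d\hat{\mu}$ is affine, the ergodic decomposition reduces the estimate to ergodic $\hat{\mu}$, and for ergodic $\hat{\mu}$ the pointwise ergodic theorem (already invoked in the discussion preceding this lemma) supplies $\hat{\mu}$-a.e.\ $\hat{x}$ with $S(\hat{x}_{F_n}) \to S(\hat{\mu})$; any such $\hat{x}$ satisfies $S(\hat{x}) = S(\hat{\mu})$, whence $S(\hat{\mu}) \le \sup_{\hat{x}} S(\hat{x})$.

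The remaining inequality $\hind(X) \le \sup_{\hat{\mu}} S(\hat{\mu})$ is the only one requiring a construction. For each $n$, choose $\hat{w}^{(n)} \in \mathcal{L}(\hat{X},F_n)$ with $S(\hat{w}^{(n)}) \ge a_n - \frac1n$, extend it to a point $\hat{x}^{(n)} \in \hat{X}$, and form the orbit-average measure $\hat{\mu}_n := \frac{1}{|F_n|}\sum_{g \in F_n} \delta_{\sigma_g \hat{x}^{(n)}}$, which is supported on $\hat{X}$. A weak-$*$ subsequential limit $\hat{\mu}$ of $(\hat{\mu}_n)$ remains supported on the closed set $\hat{X}$ and is shift-invariant because $(F_n)$ is a F\o lner sequence; and since $\log|\hat{x}_0|$ is continuous and $\int \log|\hat{x}_0|\,d\hat{\mu}_n = \frac{1}{|F_n|}\sum_{g \in F_n}\log|\hat{x}^{(n)}_g| = S(\hat{w}^{(n)})$, we obtain $S(\hat{\mu}) = \int \log|\hat{x}_0|\,d\hat{\mu} = \lim_j S(\hat{w}^{(n_j)}) \ge \lim_n \bigl(a_n - \frac1n\bigr) = \hind(X)$. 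Combining the three inequalities yields $\hind(X) = \sup_{\hat{\mu}} S(\hat{\mu}) = \sup_{\hat{x}} S(\hat{x})$, which are precisely definitions (2) and (1).

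The only genuinely delicate point is this last construction, and there the care needed is entirely standard: one checks that the weak-$*$ limit of the orbit averages is shift-invariant (immediate from the F\o lner property and boundedness of the test functions) and stays supported on $\hat{X}$ (immediate from $\hat{X}$ being closed and shift-invariant), and that continuity of $\hat{x} \mapsto \log|\hat{x}_0|$ permits passing the independence score to the limit. In the case $d = \infty$ one needs in addition only that $\hat{\Sigma}^{\zz^\infty}$ is compact and metrizable---being a countable product of finite discrete spaces---and that the pointwise ergodic theorem applies along $(F_n^{(\infty)})$, both of which hold.
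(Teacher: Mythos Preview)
Your proof is correct and follows essentially the same cyclic chain as the paper's own argument: both establish $\hind(X) \ge \sup_{\hat{x}} S(\hat{x}) \ge \sup_{\hat{\mu}} S(\hat{\mu}) \ge \hind(X)$, with the last step done via the identical orbit-average/weak-$*$-limit construction and the middle step via the pointwise ergodic theorem. The only cosmetic difference is that you invoke the ergodic decomposition before applying the ergodic theorem, whereas the paper applies it directly to an arbitrary invariant $\hat{\mu}$ to get $S(\hat{\mu}) = \int S(\hat{x})\,d\hat{\mu}$; both are equivalent here.
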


Lemma \ref{lem:hind_equiv} above is a special case of Proposition 2.1 from \cite{jenk}, which shows that several different ways of taking maximal ergodic averages for a continuous function $f$ on a compact metric space $K$ under a continuous map $T : K \rightarrow K$ are equal. Our $K$ is $\hat{X}$, our function $f$ is given by $\hat{x} \mapsto \log |\hat{x}_0|$, and our $T$ is the shift map. Three of the ``averages'' that he defines are $\gamma(f)$ (which is $\sup_{\hat{x}} S(\hat{x})$ in our context), $\alpha(f)$ (which is $\sup_{\hat{\mu}} S(\hat{\mu})$ in our context), and $\delta(f)$ (which is our definition of independence entropy), and Proposition 2.1 from \cite{jenk} states that the three are equal. The proof there is technically written only for $\mathbb{Z}$-actions, but it clearly extends to the $\mathbb{Z}^d$ case.
For the convenience of the reader, we present a short self-contained specific proof, essentially following \cite{jenk}.

\begin{proof}

 It should be clear from the definitions that $\hind(X) \geq \sup_{\hat{x}} S(\hat{x})$. Also, since $\hat{x} \mapsto \log |\hat{x}_0|$ is an $L^1$ function, for every $\hat{\mu}$, by the pointwise ergodic theorem, $S(\hat{\mu}) = \int S(\hat{x}) \ d\hat{\mu}(\hat{x})$, and so there exists $\hat{x} \in \hat{X}$ s.t. $S(\hat{x}) \geq S(\hat{\mu})$. Therefore, $\sup_{\hat{x}} S(\hat{x}) \geq \sup_{\hat{\mu}} S(\hat{\mu})$. It remains to show that $\sup_{\hat{\mu}} S(\hat{\mu}) \geq \hind(X)$. Take configurations $\hat{w}^{(n)} \in \mathcal{L}(\hat{X},F_n)$ s.t. $S(\hat{w}^{(n)}) \rightarrow \hind(X)$. Since each $\hat{w}^{(n)} \in \mathcal{L}(\hat{X})$, we can define a sequence of points $\hat{x}^{(n)} \in \hat{X}$ with $(\hat{x}^{(n)})_{F_n} = \hat{w}^{(n)}$. Then, define measures $\hat{\mu}^{(n)} = \frac{1}{|F_n|} \sum_{v \in F_n} \delta_{\sigma_v \hat{x}^{(n)}}$. By compactness of $\hat{X}$, we may take a weak-* limit point $\hat{\mu}$ of the sequence $(\hat{\mu}^{(n)})$, which will be shift-invariant. Finally,
$S(\hat{\mu}) = $
\[
\lim_{n \rightarrow \infty} \int \log |\hat{x}_0| \ d\hat{\mu}^{(n)}(\hat{x}) = \lim_{n \rightarrow \infty} \frac{1}{F_n} \sum_{v \in F_n} \log |(\hat{x}^{(n)})_v| = \lim_{n \rightarrow \infty} S(\hat{w}^{(n)}) = \hind(X).
\]

\end{proof}

\subsection{Exchangeability and de Finetti's theorem}
One of the main tools in the proof of Theorem~\ref{hind=hinf} is de
Finetti's Theorem, which we review here for completeness.

Let $\mathbb{P}$ 
denote the group of
finite permutations of the positive integers, i.e. permutations of
$\mathbb{N}$ which fix all but finitely many integers. $\mathbb{P}$
is a countable, amenable group.

\begin{definition}
A sequence $(X_n)$ of random variables is called
\textbf{exchangeable} if for any $n$ and n-tuples of distinct
integers $i_1, i_2, \ldots, i_n$ and $j_1, j_2, \ldots, j_n$, the
joint distributions of $X_{i_1}, \ldots, X_{i_n}$ and $X_{j_1},
\ldots, X_{j_n}$ are the same. Equivalently, $(X_n)$ is exchangeable
if all joint distributions are invariant under the action of $\mathbb{P}$
on $(X_n)$ by permutation of indices.
\end{definition}

The simplest examples of exchangeable sequences of random variables
are i.i.d. or Bernoulli sequences,  which are clearly exchangeable.
However, not every exchangeable sequence is Bernoulli. For instance,
one can define the $X_i$ so that either they must all be equal to
$0$ (say with probability $0.3$) or all must be equal to $1$ (say
with probability $0.7$). The reader may check that this sequence is
exchangeable, but clearly it is not Bernoulli. However, it is a
mixture of the two (trivial) Bernoulli sequences which are a.s.
constant with value $0$ and a.s. constant with value $1$ respectively.
In fact, this is not an anomaly: de Finetti's theorem
states that all exchangeable sequences are mixtures of i.i.d. distributions.

\begin{theorem*}[\textbf{de Finetti's theorem}]
\label{definetti} Any exchangeable sequence $(Y_n)$ of random
variables each taking values in a  Borel space $(\Omega,\mathcal{F})$ is a mixture of
identically distributed random variables. In other words, there is a
measure $\theta$ on the simplex of Borel probability measures on $(\Omega,\mathcal{F})$ such that

$$P\left(\bigcap_{k=1}^N\{Y_k \in A_k \}\right)
=\int \prod_{k=1}^N p(A_k) d\theta(p),$$
for any $A_1,\ldots A_N \in \mathcal{F}$.
\end{theorem*}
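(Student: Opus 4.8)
The plan is to realize the directing measure $\theta$ as the law of a regular conditional distribution of $Y_1$ given the exchangeable $\sigma$-algebra (this is the Hewitt--Savage approach; since the theorem is only recalled here for completeness, a sketch suffices). Work on the underlying probability space carrying $(Y_n)$, and for each $n$ let $\mathcal{E}_n$ be the $\sigma$-algebra generated by $Y_{n+1},Y_{n+2},\ldots$ together with all functions of $(Y_1,\ldots,Y_n)$ that are symmetric in their $n$ arguments; these decrease, and set $\mathcal{E}=\bigcap_n\mathcal{E}_n$. Exchangeability immediately gives, for bounded measurable $f:\Omega\to\mathbb{R}$, that $E[f(Y_1)\mid\mathcal{E}_n]=\frac1n\sum_{j=1}^n f(Y_j)$ (the right side is $\mathcal{E}_n$-measurable and, against any $\mathcal{E}_n$-measurable test function, has the same integral as $f(Y_j)$ for each $j\le n$). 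By the reverse martingale convergence theorem, $\frac1n\sum_{j=1}^n f(Y_j)\to E[f(Y_1)\mid\mathcal{E}]$ almost surely and in $L^1$.

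The heart of the argument is to show that, conditionally on $\mathcal{E}$, the $Y_i$ are i.i.d., i.e. $E\big[\prod_{i=1}^k f_i(Y_i)\mid\mathcal{E}\big]=\prod_{i=1}^k E[f_i(Y_1)\mid\mathcal{E}]$ for bounded measurable $f_1,\ldots,f_k$. Here I would use that for $N>k$, since both the joint law of $(Y_1,\ldots,Y_N)$ and the $\sigma$-algebra $\mathcal{E}_N$ are invariant under permutations of $\{1,\ldots,N\}$, the conditional expectation $E\big[\prod_i f_i(Y_{j_i})\mid\mathcal{E}_N\big]$ has the same value for every choice of distinct indices $j_1,\ldots,j_k\in\{1,\ldots,N\}$. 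Averaging over all such choices, $E\big[\prod_i f_i(Y_i)\mid\mathcal{E}_N\big]$ equals the conditional expectation of $\frac{1}{N(N-1)\cdots(N-k+1)}\sum_{\text{distinct}}\prod_i f_i(Y_{j_i})$. This average differs from $\prod_{i=1}^k\big(\frac1N\sum_{j=1}^N f_i(Y_j)\big)$ by $O(1/N)$ in supremum norm, since only $O(N^{k-1})$ of the $N^k$ index tuples have a repeated entry and the $f_i$ are bounded. Letting $N\to\infty$ and invoking the two convergence facts from the first paragraph (reverse martingale convergence of $E[\,\cdot\mid\mathcal{E}_N]$, and of each average to $E[f_i(Y_1)\mid\mathcal{E}]$) yields the desired conditional independence.

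Finally, since $(\Omega,\mathcal{F})$ is a Borel space, there is a regular conditional distribution $p=P(Y_1\in\cdot\mid\mathcal{E})$: a measurable map $\omega\mapsto p_\omega$ into the space of Borel probability measures on $\Omega$ with $P(Y_1\in A\mid\mathcal{E})=p_\bullet(A)$ a.s. for every $A\in\mathcal{F}$. Taking $f_i=\mathbf{1}_{A_i}$ in the conditional-independence identity and integrating,
\[
P\Big(\bigcap_{k=1}^N\{Y_k\in A_k\}\Big)=E\Big[\prod_{k=1}^N P(Y_1\in A_k\mid\mathcal{E})\Big]=E\Big[\prod_{k=1}^N p_\bullet(A_k)\Big]=\int\prod_{k=1}^N m(A_k)\,d\theta(m),
\]
where $\theta$ is the law of the random measure $p$. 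The main obstacle is the middle paragraph: deducing honest conditional independence from mere exchangeability, where one must control the contribution of index tuples with a repeated entry and justify passing the limit through the conditional expectations; the remaining ingredients (reverse martingale convergence, existence of regular conditional distributions on Borel spaces, measurability of $\omega\mapsto p_\omega$) are standard.
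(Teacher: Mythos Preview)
Your sketch is correct and follows the standard Hewitt--Savage route: identify the exchangeable $\sigma$-algebra as the intersection of the decreasing family $\mathcal{E}_n$, use reverse martingale convergence to get $\frac{1}{n}\sum_{j\le n} f(Y_j)\to E[f(Y_1)\mid\mathcal{E}]$, and then promote this to conditional i.i.d.\ by averaging over distinct index tuples and controlling the $O(1/N)$ error from repeated indices. The passage to the limit in the conditional expectations is justified exactly as you say, by reverse martingale convergence applied to the filtration $(\mathcal{E}_N)$.

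There is nothing to compare with, however: the paper does not prove de Finetti's theorem at all. It merely states the theorem and attributes this Borel-space version to Hewitt and Savage, since the result is used only as a tool in the proof of Lemma~\ref{lem:cord_ind}. Your own remark that ``a sketch suffices'' is apt; in fact the paper dispenses with even a sketch. What you have supplied is precisely the argument one would find in the cited reference, so in that sense your proposal and the paper's (implicit) approach coincide.
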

This version of de Finetti's theorem
for random variables taking values in a Borel measurable space is due to Hewitt and Savage \cite{hewittsavage}.

In particular, for exchangeable random variables, the
\emph{exchangeable} $\sigma$-algebra coincides with the \emph{tail}
$\sigma$-algebra: Any measurable function of an exchangeable
sequence which is invariant under finite permutations of the
variables is measurable with respect to the tail. In the case of
exchangeable variables taking values in a finite set $\Omega$, the
particular i.i.d. distribution in the mixture can be recovered by observing the
\emph{empirical distributions}
$\overline{\mu}_a(X_1,\ldots,X_n,\ldots):=\lim_{n \to
\infty}\frac{1}{n}|\{i\le n ~:~ X_i = a\}|$, $a \in \Omega$.


\subsection{Allowable local perturbations}
A useful notion in the proof and statement of our results is the
notion of ``allowable local perturbations'' for a point in a $\mathbb{Z}^d$ subshift.




For any subshift $X \subset \Sigma^{\zz^d}$, $d \in \mathbb{N} \cup \{\infty\}$, shift-invariant
measure $\mu$ on $X$, $x \in X$, and $g \in \zz^d$, we say that a letter $a$ is a
{\it $\mu$-allowable local perturbation of $x$ at $g$} if the conditional probability
$$p_{x,g}(a):=\mu(\{z \in X:z_g = a\} \mid
\{ z \in X ~:~ z_{\{g\}^c}=x_{\{g\}^c}\})$$
is greater than $0$.

For any shift-invariant measure $\mu$ on $X$, define a 
map $\phi_\mu:X \to \hat{\Sigma}^{\zz^d}$ by
$$ (\phi_{\mu}(x))_g = \{a \in \Sigma ~:~ p_{x,g}(a) >0 \}.$$
This is a measurable factor map to $\hat{\Sigma}^{\zz^d}$; $\mu$-almost surely, $(\phi_\mu(x))_g \ne
\varnothing$ for all $g \in \zz^d$, since by definition $\mu(\{x_g \in (\phi_{\mu}(x))_g\})=1$.

\section{Proof of theorem \ref{hind=hinf}}\label{mainproof}

The group $\mathbb{P}$ acts on $\mathbb{Z}^{\infty}$ in a natural way by permuting coordinates:
for $\rho \in \mathbb{P}$ and $g \in \mathbb{Z}^{\infty}$, $(\rho(g))_i := g_{\rho(i)}$.
Through the action of $\mathbb{P}$ on $\mathbb{Z}^{\infty}$,
$\mathbb{P}$ also acts on $X^{\otimes \infty}$.

We will consider measures of maximal entropy on $X^{\otimes \infty}$
which are in addition invariant with respect to the action of
$\mathbb{P}$ on $X^{\otimes \infty}$. The existence of such measures follows
from amenability of $\mathbb{P}$, upper semi-continuity of 
measure-theoretic entropy, 
and the fact that the action
of $\mathbb{P}$ on $\Xinf$ preserves measure-theoretic entropy.

To be specific, choose any shift-invariant measure
$\nu$ on $X^{\otimes \infty}$, and take any weak-$*$ limit point $\mu$
of the sequence $\displaystyle \frac{1}{|P_n|} \sum_{\rho \in P_n} \rho \circ
\nu$, where
$P_n \subset \mathbb{P}$ is the set of permutations which fix all
integers greater than $n$.

Clearly, $h(\nu)=h(\rho\nu)$ for any permutation $\rho
\in \mathbb{P}$. Since $\nu \mapsto h(\nu)$ is an
affine and upper semi-continuous function, it follows that $\mu$ is
a measure of maximal entropy whenever $\nu$ is a measure of maximal
entropy.

\begin{lemma}\label{lem:cord_ind}
For any $\mathbb{P}$-invariant measure $\nu$ on $X^{\otimes \infty}$
and any finite $F \subset \mathbb{Z}^{\infty}\setminus \{0\}$, $x_0$
and $x_F$ are $\nu$-almost surely $\nu$-conditionally independent given $x_{F^c\setminus
\{0\}}$.
\end{lemma}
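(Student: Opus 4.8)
The plan is to fix the finite set $F$, choose $d \in \mathbb{N}$ with $F \cup \{0\} \subseteq \mathbb{Z}^d$ under the standard embedding $\mathbb{Z}^d \hookrightarrow \mathbb{Z}^\infty$, and use $\mathbb{P}$-invariance of $\nu$ to manufacture an exchangeable family of random variables indexed by the ``extra'' directions $j > d$, to which de Finetti's theorem applies in the Hewitt--Savage form (our variables take values in a Borel space).

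First I would set up the exchangeable family. Every $\rho \in \mathbb{P}$ that fixes $\{1,\dots,d\}$ fixes the core $\mathbb{Z}^d$ pointwise, hence fixes $0$ and every element of $F$, and merely permutes the slices $\Lambda_j := \{ g \in \mathbb{Z}^\infty : g_i = 0 \text{ for all } i > d \text{ with } i \neq j \}$, $j > d$. Since $\Lambda_j$ involves only the single coordinate $j$ beyond the core, reindexing is the only effect of $\rho$, so writing $Z_j$ for $x$ restricted to $\Lambda_j \setminus \mathbb{Z}^d$ (canonically indexed by $\mathbb{Z}^d \times (\mathbb{Z}\setminus\{0\})$), the sequence $(Z_j)_{j>d}$ is exchangeable. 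Hewitt--Savage then provides a tail (= exchangeable) $\sigma$-algebra $\mathcal{T}$ so that the $Z_j$ are i.i.d.\ conditionally on $\mathcal{T}$. Crucially, for every $n \geq d$ the $\sigma$-algebra $\sigma(Z_j : j > n)$ is generated by the coordinates $x_g$ with $g \notin \mathbb{Z}^d$, hence $\mathcal{T} \subseteq \sigma(x_{F^c \setminus \{0\}})$; this containment is what makes $\mathcal{T}$ a legitimate conditioning for the lemma.

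Next I would convert this into the asserted conditional independence. It suffices to show $x_0 \perp x_{\{0\}^c} \mid \mathcal{T}$, i.e.\ that the conditional probabilities $p_{x,0}(\cdot) = \nu(\, x_0 \in \cdot \mid x_{\{0\}^c}\,)$ are $\mathcal{T}$-measurable: combined with $\mathcal{T} \subseteq \sigma(x_{F^c\setminus\{0\}})$ this yields $x_0 \perp x_F \mid \sigma(x_{F^c\setminus\{0\}})$ by the ``weak union'' property of conditional independence. (Equivalently, one can aim to show that the conditional law of $(x_0, x_F)$ given $x_{F^c\setminus\{0\}}$ is a product measure, or use the consequence of de Finetti that $Y_a \perp Y_b \mid \sigma(Y_c : c \neq a,b)$ for an exchangeable family.) The mechanism is that in $X^{\otimes \infty}$ the admissible values of $x_0$ given all other sites are carved out by the bi-infinite axial rows through $0$; only the finitely many rows in the core directions $i \le d$ can couple $x_0$ to $x_F$, while infinitely many rows in directions $j > d$ are conditioned on and, by de Finetti, are like i.i.d.\ samples. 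A Kolmogorov $0$--$1$/Borel--Cantelli argument over these should show that the admissible-letter set they impose on $x_0$ --- and, with more care, the full conditional law of $x_0$ --- is already $\mathcal{T}$-measurable, so the remaining finitely many constraints hold $\nu$-almost surely automatically and give nothing new.

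I expect this last step to be the main obstacle. Since $0$ and $F$ sit in the finite core fixed by the permutations used, de Finetti does not act on them directly, and one must argue separately that the coupling of $x_0$ to $x_F$ through the finitely many core directions is $\nu$-a.s.\ redundant once everything else is known --- morally, that conditionally on $\mathcal{T}$ the measure $\nu$ is ``coherent'' enough that the nearest-neighbour constraints inside the core are automatically satisfied. Making this precise is where I would carefully combine the i.i.d.\ structure from de Finetti, a zero--one law, and, if needed, the shift-invariance of $\nu$; once the right conditioning $\sigma$-algebra and the right $0$--$1$ statement are isolated, the remaining bookkeeping should be routine.
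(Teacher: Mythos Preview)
Your overall plan (exchangeability plus de~Finetti) is right, but the specific exchangeable family you chose cannot prove the lemma. By permuting only the coordinates $j>d$, you fix every site of the core $\mathbb{Z}^d$, so \emph{both} $x_0$ and $x_F$ are fixed by all the permutations you use. Your exchangeable variables $(Z_j)_{j>d}$ live entirely outside the core and are disjoint from $x_F$; de~Finetti therefore tells you that the $Z_j$ are conditionally i.i.d.\ given their tail, but says nothing about how $x_0$ and $x_F$ are coupled. You recognize this yourself (``de~Finetti does not act on them directly''), but the patch you sketch --- using admissible-letter sets, nearest-neighbour constraints, and a $0$--$1$/Borel--Cantelli argument --- cannot close the gap. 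The lemma is purely a statement about the measure $\nu$; it uses no shift-invariance and no structure of $X$ beyond $\mathbb{P}$-invariance, so an argument via topological constraints on $X^{\otimes\infty}$ is off-target. There is simply no reason, from your setup, that the conditional law of $x_0$ given $x_{\{0\}^c}$ should be $\mathcal{T}$-measurable.

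The paper's proof differs in exactly this point. It chooses $d_0$ with $F\subset\mathbb{Z}^{d_0}$ and then permutes \emph{blocks} of coordinates $(d_0(k-1),d_0k]$, $k\ge 1$, including the first block $[1,d_0]$. Every such permutation still fixes the origin $0\in\mathbb{Z}^\infty$ (all its coordinates are zero), but now $x_F$ is carried inside the first exchangeable variable $\xi_1=(x_n)_{n\in\mathbb{Z}^{d_0}\setminus\{0\}}$. Since the joint law of $x_0$ together with $(\xi_k)_{k\ge1}$ is invariant under permutations of the $\xi_k$, the conditional law of $x_0$ given the full sequence equals its conditional law given the tail, so $x_0\perp\xi_1\mid(\xi_k)_{k\ge2}$ directly from de~Finetti/Hewitt--Savage. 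One then enlarges the conditioning set (by additionally conditioning on $x_{\mathbb{Z}^{d_0}\setminus(F\cup\{0\})}$, which is part of $\xi_1$) and lets $d\to\infty$ along factorials to exhaust $F^c\setminus\{0\}$. The crucial trick you are missing is to put $F$ \emph{inside} one of the exchangeable blocks rather than leaving it in a fixed core.
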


\begin{proof}
For $n \in \mathbb{Z}^\infty$, we define its {\it support} by $supp(n)=\{ k \in \mathbb{N} ~:~ n_k \ne 0\}$.
Let $F \subset \mathbb{Z}^{\infty}\setminus \{0\}$ be any finite set.
We denote by $S_{d}(F)$ the set of all $n \in \mathbb{Z}^d \setminus (F\cup\{0\})$ for which there is some $k \in \mathbb{N}$ such that $supp(n) \subset ((d-1)k,d\cdot k]$.

Choose any $d_0 \in \mathbb{N}$ so that $F \subset \mathbb{Z}^{d_0}$ (naturally identifying $\mathbb{Z}^{d_0}$ as a subset of $\mathbb{Z}^{\infty}$), equivalently
$\bigcup_{n \in F}supp(n) \subset [1,d_0]$. For $k \in \mathbb{N}$, define $\tau_k : \mathbb{Z}^\infty \to \mathbb{Z}^{d_0}$ by
$\tau_k(n)_{j} = n_{j+d_0 k}$ for $1 \le j \le d_0$. 

Then, define $\xi_k:X^{\otimes \infty} \to \Sigma^{\mathbb{Z}^{d_0} \setminus \{0\}}$ which projects onto the set of nonzero $n \in \mathbb{Z}^{\infty}$ whose support is contained in $(d_0(k-1),d_0\cdot k]$, which are mapped to $\mathbb{Z}^{d_0} \setminus \{0\}$ via $\tau_k$:

$$(\xi_k(x))_n := x_{\tau_k(n)} \mbox{ for } n \in \mathbb{Z}^{d_0} \setminus \{0\}.$$

We consider $\xi_1,\xi_2,\ldots,\xi_k,\ldots$ as a sequence of random variables, taking values in $\Sigma^{\mathbb{Z}^{d_0} \setminus \{0\}}$, with respect to the measure $\nu$ on $X^{\otimes \infty}$.
It follows from $\mathcal{P}$-invariance of $\nu$ that the sequence of random variables $\{\xi_k\}_{k=1}^\infty$ is exchangeable.
Thus, by de Finetti's theorem their joint distribution is a mixture of independent identically distributed random variables.
Furthermore, the joint distribution of $x_0$ and the $\{\xi_k\}$'s is invariant under permutations of the sequence $\{\xi_k\}_{k=1}^\infty$. It follows that given $\{\xi_k\}_{k \in S}$ for any infinite $S \subset \mathbb{N}$, $x_0$ is independent of $\{\xi_k\}_{k \in \mathbb{N} \setminus S}$.

We have concluded that $\nu$-almost surely, $x_0$ and $\xi_1 = \{ x_n ~:~ n \in \mathbb{Z}^{d_0} \setminus \{0\} \}$ are conditionally independent given $\{\xi_k\}_{k > 1} = \{ x_n ~:~ n \in S_{d_0}(F) \setminus \mathbb{Z}^{d_0}\}$. The reader may check that additionally conditioning on some subset of $\{ x_n ~:~ n \in \mathbb{Z}^{d_0} \setminus \{0\} \}$ (along with $\{ x_n ~:~ n \in S_{d_0}(F) \setminus \mathbb{Z}^{d_0}\}$) preserves conditional independence of $x_0$ and the remaining sites in $\{ x_n ~:~ n \in \mathbb{Z}^{d_0} \setminus \{0\} \}$. In particular, this means that $\nu$-almost-surely, $x_0$ and $x_F$ are conditionally independent given $\{x_n ~:~ n \in S_{d_0}(F)\}$.

Observe that this holds for all $d > d_0$ as well. Also note that $S_{d_1}(F) \subset S_{d_2}(F)$ whenever $d_1$ divides $d_2$, and so $\{S_{d!}(F)\}_{d \geq d_0}$ is an increasing sequence of subsets of $\mathbb{Z}^\infty$ whose union is $\mathbb{Z}^\infty \setminus F \cup\{0\}$. Thus, $\nu$-almost-surely $x_0$ and $x_F$ are independent given $x_{F^c\setminus\{0\}}$, completing the proof.

\end{proof}


\begin{lemma}\label{indepupdates}
Let $\mu$ be a measure on $X^{\otimes \infty}$ which is both shift-invariant and $\mathbb{P}$-invariant. Then for a set of $x \in X^{\otimes \infty}$ of full $\mu$-measure, any $y \in \Sigma^{\mathbb{Z}^{\infty}}$ obtained by a finite number of $\mu$-allowable local-perturbations of $x$ is in $X^{\otimes \infty}$. Also, for any such $x$, $\phi_\mu(x) \in \widehat{X^{\otimes \infty}}$.
\end{lemma}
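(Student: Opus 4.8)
The plan is to use Lemma~\ref{lem:cord_ind} as a "one-site" independence statement and then bootstrap it to finitely many sites by induction, after which both conclusions of the lemma fall out.

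First I would set up the single-site case. Fix $g \in \zz^{\infty}$ and let $a$ be a $\mu$-allowable local perturbation of $x$ at $g$, i.e. $p_{x,g}(a) > 0$. I claim that for $\mu$-a.e.\ $x$, the point $y$ obtained from $x$ by replacing $x_g$ with $a$ lies in $\Xinf$. Since $\mu(\Xinf) = 1$, it is enough to show that whenever $z \in \Xinf$ agrees with $x$ off $g$ and $z_g = a$, we are done; but the very definition of $p_{x,g}(a) > 0$ means exactly that the conditional measure (given $x_{\{g\}^c}$) of the event $\{z_g = a\}$ is positive, hence that event is nonempty in $\Xinf$, so such a $z$ exists and $y = z \in \Xinf$. (One must be slightly careful that the conditional measure is defined for $\mu$-a.e.\ $x$ and that "$p_{x,g}(a) > 0$ $\Rightarrow$ the event has a representative in $\Xinf$" holds off a null set; this is a routine disintegration argument using that $\Xinf$ is closed and $\mu$-conull.) By shift-invariance it suffices to treat $g = 0$, and then this is immediate.

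Next comes the inductive step, which is where Lemma~\ref{lem:cord_ind} enters and which I expect to be the main obstacle. Suppose we have already changed the values of $x$ on a finite set $G = \{g_1,\ldots,g_m\}$ to allowable values, producing a point $y^{(m)} \in \Xinf$ that agrees with $x$ off $G$, and now we wish to change the value at a further site $g_{m+1} \notin G$ to some $a$ which is $\mu$-allowable for $y^{(m)}$ at $g_{m+1}$. The subtlety is that allowability was defined relative to $\mu$ and the configuration $y^{(m)}$, which already differs from $x$; so I need to know that the conditional law of $x_{g_{m+1}}$ given $x_{\{g_{m+1}\}^c}$ is "insensitive" to the values on the finite set $G$, in the sense that its support does not shrink when we condition on the perturbed values on $G$ instead of the original ones. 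This is precisely what conditional independence gives: translating $g_{m+1}$ to $0$ and taking $F = G - g_{m+1}$, Lemma~\ref{lem:cord_ind} says $x_0$ and $x_F$ are $\mu$-conditionally independent given $x_{F^c \setminus \{0\}}$, so the conditional law of $x_0$ given all other coordinates depends only on $x_{F^c \setminus \{0\}}$ and not on $x_F$. Hence replacing the values on $G$ by anything (still leaving a positive-probability event) does not change which letters are allowable at $g_{m+1}$, and in particular $a$ remains the value of $z_{g_{m+1}}$ for some $z \in \Xinf$ agreeing with $y^{(m)}$ off $g_{m+1}$. Applying the single-site argument to $z$ shows $y^{(m+1)} \in \Xinf$. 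Iterating over the finitely many perturbation sites (and intersecting the finitely many full-measure sets arising at each stage, noting there are only countably many finite $F$ and finitely many letters, so we can take one $\mu$-conull set working simultaneously for all of them) proves the first assertion.

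Finally, the second assertion $\phi_\mu(x) \in \Xinf$ — equivalently $\phi_\mu(x) \subset X$ as a box-product subset — is a direct consequence. By definition $(\phi_\mu(x))_g = \{a : p_{x,g}(a) > 0\}$, the set of $\mu$-allowable perturbations of $x$ at $g$. Given any finite set of sites $g_1,\ldots,g_m$ and any choice $a_j \in (\phi_\mu(x))_{g_j}$, the conditional-independence argument above shows these finitely many perturbations can be performed \emph{simultaneously}: the allowability of $a_{j}$ at $g_j$ is unaffected by the values placed at the other $g_i$'s, so iterating the first part of the lemma produces a point of $\Xinf$ whose value at each $g_j$ is $a_j$. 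In particular every row of that point (through any cardinal direction) lies in $X$, so every such combination of choices is $X$-legal along every axis; since this holds for all finite collections of sites and all choices, $\phi_\mu(x)$, viewed as a Cartesian-product subset of $\Sigma^{\zz^\infty}$, is contained in $\Xinf$, i.e.\ $\phi_\mu(x) \in \widehat{\Xinf}$. The expected difficulty, as noted, is entirely in justifying that "allowable" is stable under the earlier perturbations; once Lemma~\ref{lem:cord_ind} is invoked correctly this is clean.
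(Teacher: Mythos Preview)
Your proposal is correct and follows essentially the same route as the paper: induct on the number of perturbed sites and use Lemma~\ref{lem:cord_ind} to show that the conditional law of $x_g$ given $x_{\{g\}^c}$ is unaffected by changes at the other finitely many sites, so that each successive perturbation still lands in $\Xinf$.

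The only notable difference is that the paper packages the induction as a direct computation of the conditional probability
\[
\mu(\{z_F = y_F\} \mid z_{F^c} = x_{F^c}) \;=\; \prod_{g \in F} p_{x,g}(y_g),
\]
whereas you argue the qualitative statement (membership in $\Xinf$) and stop there. Both arguments hinge on exactly the same use of Lemma~\ref{lem:cord_ind} to identify $p_{y,h}$ with $p_{x,h}$. The product formula, however, is invoked again in the proof of Lemma~\ref{lem:mme_rel_ind} (to assert that the sites are conditionally independent given $\phi_\mu(x)$), so if you intend to continue through the paper you will want to record it explicitly rather than only the weaker consequence you stated.
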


\begin{proof}
By compactness of $X^{\otimes \infty}$, it is sufficient to show that for $\mu$-a.e. $x \in X^{\otimes \infty}$,
any finite $F \subset \mathbb{Z}^{\infty}$, and any configuration $y_F \in \Sigma^F$ with $y_g \in (\phi_{\mu}(x))_g$ for all $g \in F$,
there exists $z \in X^{\otimes \infty}$ with $z_F=y_F$.

It will suffice to show that for $\mu$-a.e. $x \in X^{\otimes \infty}$ and for any choices $y_g \in (\phi_{\mu}(x))_g$ for all $g \in F$,
\begin{equation}
\label{eq:ind_change_cord}
 \mu( \{z \in X^{\otimes \infty}: z_F = y_F \mid z_{F^c}=x_{F^c}\}) =
\prod_{ g \in F} p_{x,g}(y_g).
\end{equation}
This is because integrating \eqref{eq:ind_change_cord} with respect to $\mu$ shows that $\mu([y_F])>0$,
and since $\mu$ is a measure on $X^{\otimes \infty}$, this would show that $y_F \in \LL(X^{\otimes \infty})$,
completing the proof as explained above.

We prove (\ref{eq:ind_change_cord}) by induction on $n = |F|$. For $n=1$, (\ref{eq:ind_change_cord}) is just the definition of $p_{x,g}$.
For the inductive step, assume that $|F| = n$ and that (\ref{eq:ind_change_cord}) is true for all sets of
cardinality less than $n$.

Choose some $g \in F$, and let $F_1 := F \setminus \{g\}$. For the purposes of the proof, we extend $y_F$ to a full point $y \in \Sigma^{\zz^{\infty}}$ which is equal to $y_F$ on $F$ and equal to $x_{F^c}$ on $F_c$. For $\mu$-a.e. $x \in X^{\otimes \infty}$, the definition of conditional probability gives
\begin{multline*}
\mu( \{z \in X^{\otimes \infty}: z_F = y_F \mid z_{F^c}=x_{F^c}\}) \\
= \mu( \{z \in X^{\otimes \infty}: z_{F_1} = y_{F_1} \mid z_{F_1^c}=y_{F_1^c}\}) \mu(\{z_g = y_g \mid z_{F^c} = x_{F^c}\}).
\end{multline*}

The inductive hypothesis on $F_1$ implies that
$$\mu( \{z \in X^{\otimes \infty}: z_{F_1} = y_{F_1} \mid z_{F_1^c}=y_{F_1^c}\})=\prod_{h \in F_1}p_{y,h}(y_h),$$
and by Lemma~\ref{lem:cord_ind}, $z_{F_1}$ and $z_g$ are conditionally $\mu$-independent given $z_{F^c}$, and so $\mu(\{z_g = y_g \mid z_{F^c} = x_{F^c}\}) = \mu(\{z_g = y_g \mid z_{\{g\}^c} = x_{\{g\}^c}\}) = p_{x,g}(y_g)$. Therefore, proving (\ref{eq:ind_change_cord}) is reduced to showing that $p_{y,h}(y_h)=p_{x,h}(y_h)$ for all $h \in F_1$.

Again by Lemma \ref{lem:cord_ind}, for every $h \in F_1$, $z_{F \setminus \{h\}}$ and $z_h$ are conditionally independent given $z_{F^c}$, and so
$$p_{y,h}(y_h) = \mu(\{z \in X^{\otimes \infty} : z_h=y_h \mid z_{\{h\}^c}=y_{\{h\}^c}\})$$

$$\textrm{and } p_{x,h}(y_h) = \mu(\{z \in X^{\otimes \infty} : z_h=y_h \mid z_{\{h\}^c}=x_{\{h\}^c}\})$$
are both equal to $\mu(\{z \in X^{\otimes \infty} : z_h=y_h \mid z_{F^c}=x_{F^c}\})$.

In particular, this shows that $p_{y,h}(y_h)=p_{x,h}(y_h)$ for all $h \in F_1$, completing the proof.


\end{proof}

In other words, for $\mu$ as above, the measure-theoretic factor map $\phi_\mu$ maps to $\widehat{X^{\otimes \infty}}$, $\mu$-almost surely. 
We denote by $\hat{\mu}$ the measure $\phi_\mu(\mu)$ on $\widehat{\Xinf}$, which is just the pushforward of $\mu$ under $\phi_{\mu}$.

Our next step is to characterize the structure of $\mathbb{P}$-invariant measures of maximal entropy on $\Xinf$. 

\begin{lemma}\label{lem:mme_rel_ind}
For any $\mathbb{P}$-invariant measure of maximal entropy $\mu$ on $\Xinf$,
\begin{equation}
\label{eq_mu_over_hat}
\mu = \int_{\widehat{\Xinf}} \mu_{\hat{x}} \ d\hat{\mu}(\hat{x}),
\end{equation}
where for any $\hat{x} \in \widehat{\Xinf}$, $\mu_{\hat{x}}$ denotes the independent product of the uniform measures on $\hat{x}_g$ over all $g \in \mathbb{Z}^{\infty}$.
\end{lemma}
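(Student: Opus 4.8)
The plan is to use the factor map $\phi_\mu : \Xinf \to \widehat{\Xinf}$ together with the Rokhlin--Abramov formula to split the entropy $h(\mu)$ into the entropy of the factor $\hat\mu = \phi_\mu(\mu)$ and the relative entropy $h(\mu \mid \phi_\mu)$, and then to argue that both the upper bound on $h(\mu)$ forced by maximality and the structure of allowable perturbations pin down the disintegration of $\mu$ over $\hat\mu$. Concretely, I would first show the general inequality
\[
h(\mu \mid \phi_\mu) \le S(\hat\mu) \le \hind(\Xinf) = \hind(X),
\]
the middle inequality being Lemma~\ref{lem:hind_equiv}(2) and the last being $\hind(X^{\otimes d}) = \hind(X)$ from \cite{LMP}. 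For the first inequality I would use the "entropy via lexicographic past" description: $h(\mu\mid\phi_\mu) = H_\mu(\{0\} \mid \pi_{P_\infty} \vee \phi_\mu)$, and observe that conditioned on $\phi_\mu(x)$ the symbol $x_0$ ranges over the set $(\phi_\mu(x))_0$, so its conditional Shannon entropy is at most $\log|(\phi_\mu(x))_0|$; integrating gives $\le \int \log |\hat x_0|\, d\hat\mu = S(\hat\mu)$.

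Next, since $\mu$ is a measure of maximal entropy, $h(\mu) = h(\Xinf) = h_\infty(X) \ge \hind(X)$ (the inequality $\hind(X)\le h_\infty(X)$ is from \cite{LMP}). Combining with Rokhlin--Abramov, $h(\mu) = h(\hat\mu) + h(\mu\mid\phi_\mu)$, and with the bound just established, $h(\mu\mid\phi_\mu) \le \hind(X) \le h(\mu) = h(\hat\mu) + h(\mu\mid\phi_\mu)$, which forces $h(\hat\mu) \ge 0$ — not yet useful on its own. So instead I would push harder: I claim $h(\mu\mid\phi_\mu) = S(\hat\mu)$ exactly, i.e. equality holds in the first inequality above. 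This is the crux. Equality in "conditional entropy of $x_0$ given the past and $\phi_\mu$ equals $\log|(\phi_\mu(x))_0|$" is precisely the statement that, conditioned on $\phi_\mu(x)$ and on $x_{P_\infty}$, the symbol $x_0$ is \emph{uniformly} distributed on $(\phi_\mu(x))_0$. To get this I would use Lemma~\ref{indepupdates} and especially equation \eqref{eq:ind_change_cord}: the allowable-perturbation sets update independently, so for any finite $F$ the conditional law of $x_F$ given $x_{F^c}$ is the product $\prod_{g\in F} p_{x,g}(\cdot)$; combined with $\mathbb{P}$-invariance and de Finetti (Lemma~\ref{lem:cord_ind}) one shows the single-site conditional law $p_{x,g}$ is in fact uniform on $(\phi_\mu(x))_g$ whenever $\mu$ is of maximal entropy — otherwise one could strictly increase entropy by replacing $\mu$ with $\int \mu_{\hat x}\, d\hat\mu(\hat x)$, contradicting maximality. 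Indeed the cleanest route: define $\mu' := \int_{\widehat{\Xinf}} \mu_{\hat x}\, d\hat\mu(\hat x)$; check $\mu'$ is a shift-invariant (and $\mathbb{P}$-invariant) measure on $\Xinf$ with $\phi_{\mu'} = \mathrm{id}$ on the relevant $\sigma$-algebra and $\phi_{\mu'}(\mu') = \hat\mu$; then $h(\mu') = h(\hat\mu) + h(\mu' \mid \phi_{\mu'}) = h(\hat\mu) + S(\hat\mu)$ since under $\mu'$ each site is genuinely uniform on its allowable set and the sites update independently, making the relative entropy exactly $S(\hat\mu)$. Meanwhile $h(\mu) = h(\hat\mu) + h(\mu\mid\phi_\mu) \le h(\hat\mu) + S(\hat\mu) = h(\mu')$, so by maximality $h(\mu) = h(\mu')$ and the inequality $h(\mu\mid\phi_\mu)\le S(\hat\mu)$ is an equality.

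Finally I would upgrade "equality of entropies" to "equality of measures." Knowing $h(\mu\mid\phi_\mu) = S(\hat\mu) = \int \log|\hat x_0|\,d\hat\mu$ and that $H_\mu(\{0\}\mid \pi_{P_\infty}\vee\phi_\mu) \le \int \log|(\phi_\mu x)_0|\,d\hat\mu$ with equality, the conditional distribution of $x_0$ given $(\pi_{P_\infty}\vee\phi_\mu)$ must be a.s. uniform on $(\phi_\mu(x))_0$ (equality in the bound $H \le \log(\#\text{support})$ forces uniformity). By shift-invariance the same holds at every site: conditioned on $\phi_\mu(x)$ and all other coordinates, $x_g$ is uniform on $(\phi_\mu(x))_g$. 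Together with \eqref{eq:ind_change_cord} (independence of the updates), this says exactly that the conditional law of $x$ given $\phi_\mu(x) = \hat x$ is the independent product of uniform measures on the $\hat x_g$, i.e. $\mu_{\hat x}$; integrating over $\hat\mu$ gives \eqref{eq_mu_over_hat}. The main obstacle is the middle step — converting maximality of entropy into the pointwise statement that the perturbation sets carry the \emph{uniform} conditional law — and the comparison measure $\mu'$ is the device that makes this work, provided one checks carefully that $\mu'$ is indeed supported on $\Xinf$ (which follows from Lemma~\ref{indepupdates}, since $\hat\mu$-a.e. $\hat x \in \widehat{\Xinf}$) and that its relative entropy over $\phi_{\mu'}$ equals $S(\hat\mu)$.
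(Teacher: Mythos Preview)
Your proposal is correct and follows essentially the same approach as the paper: introduce the comparison measure $\nu := \int \mu_{\hat x}\,d\hat\mu$, use the bound $h(\mu\mid\phi_\mu) \le S(\hat\mu)$ together with maximality of $h(\mu)$ to force equality, and conclude that the single-site conditional laws given $\phi_\mu$ must be uniform on $(\phi_\mu(x))_0$. The paper's version is slightly more direct in that it invokes the conditional independence established in Lemma~\ref{indepupdates} at the outset to write $h(\mu\mid\phi_\mu) = \int H_\mu(\{0\}\mid\phi_\mu(x))\,d\hat\mu$ as an \emph{equality} rather than obtaining it as an inequality via the lexicographic past, which makes the ``equality iff uniform'' step immediate and avoids your separate upgrade from entropy equality to measure equality.
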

\begin{proof}
The conditional entropy of $\mu$ given $\phi_\mu$ satisfies
$$ h(\mu \mid \phi_\mu) \ = \int_{\widehat{\Xinf}}H_\mu( \{0\} \mid \phi_{\mu}(x)) \ d\hat{\mu} \le \int_{\widehat{\Xinf}}\log|\hat{x}_0| \ d\hat{\mu}(\hat{x})$$
The equality follows since by Lemma~\ref{indepupdates}, all sites $x_v$ are conditionally independent given $\phi_{\mu}(x)$, and the inequality comes from the trivial fact that $H_{\mu}(R) \leq \log k$ for any random variable $R$ taking on $k$ values. Clearly equality holds in the inequality iff the conditional distribution of $x_0$ under $\mu$ given $\phi_\mu(x)$ is $\mu$-a.s. uniform over the finite set $\phi_\mu(x)_0$.

Thus, if $\mu$ does not satisfy the formula \eqref{eq_mu_over_hat}, we could define a measure $\displaystyle \nu = \int_{\widehat{\Xinf}} \mu_{\hat{x}} \ d\hat{\mu}(\hat{x})$ for which $h(\nu) > h(\mu)$, which would contradict the fact that $\mu$ is a measure of maximal entropy.
\end{proof}

\begin{lemma}\label{zeroentfactor}

Let $\mu$ be a $\mathbb{P}$-invariant measure of maximal entropy on $\Xinf$.
The measure-theoretic entropy of the measure $\hat{\mu}$ with
respect to the $\mathbb{Z}^{\infty}$-action by shifts is zero:
$$h(\hat{\mu})=0.$$
\end{lemma}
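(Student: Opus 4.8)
The plan is to exhibit $\hat\mu$ as a measure-theoretic factor of a zero-entropy factor of $\mu$ --- namely the factor generated by the tail $\sigma$-algebra --- and then use that tail factors carry no entropy.

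First I would prove that the factor map $\phi_\mu$ is measurable with respect to the tail $\sigma$-algebra $\mathcal{T} := \bigcap_{n} \sigma(x_g : g \in \mathbb{Z}^{\infty} \setminus B_n)$ of $\mu$, where $(B_n)$ is any increasing exhaustion of $\mathbb{Z}^{\infty}$ by finite sets (the resulting $\sigma$-algebra is shift-invariant and independent of the choice of $(B_n)$). Fixing $g \in \mathbb{Z}^{\infty}$ and $a \in \Sigma$, translate Lemma~\ref{lem:cord_ind} by $g$ (using shift-invariance of $\mu$): for every finite $F \subset \mathbb{Z}^{\infty} \setminus \{g\}$, the variables $x_g$ and $x_F$ are $\mu$-conditionally independent given $x_{F^c \setminus \{g\}}$, so $\mu(x_g = a \mid x_{\{g\}^c}) = \mu(x_g = a \mid x_{F^c \setminus \{g\}})$ $\mu$-a.s. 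Letting $F$ increase to $\mathbb{Z}^{\infty} \setminus \{g\}$ along a sequence (possible since $\mathbb{Z}^{\infty}$ is countable) and applying reverse martingale convergence, the right-hand side converges $\mu$-a.s. to $\mu(x_g = a \mid \mathcal{T})$. Hence $p_{x,g}(a)$ is $\mathcal{T}$-measurable, and therefore so is $(\phi_\mu(x))_g = \{a \in \Sigma : p_{x,g}(a) > 0\}$; as this holds for every $g$, the map $\phi_\mu$ is $\mathcal{T}$-measurable. Consequently $(\widehat{\Xinf}, \hat\mu)$ is a measure-theoretic factor of the tail factor $(\Xinf/\mathcal{T}, \mu)$, and in particular $h(\hat\mu)$ is at most the entropy of that tail factor.

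It then remains to see that the tail factor has zero entropy. This is the classical fact that the tail $\sigma$-algebra of a shift-invariant measure is contained in its Pinsker $\sigma$-algebra: for $\mathbb{Z}$-actions it is part of Rokhlin--Sinai theory, and the statement extends to actions of countable amenable groups, in particular $\mathbb{Z}^{\infty}$. Combined with the previous step this yields $h(\hat\mu) = 0$. I note that this argument uses only shift- and $\mathbb{P}$-invariance of $\mu$ (through Lemma~\ref{lem:cord_ind}); maximality of entropy is not really needed, though if one wishes one can additionally identify $\mathcal{T}$ with $\sigma(\phi_\mu)$ exactly via Lemma~\ref{lem:mme_rel_ind}.

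The hard part is the last step, i.e.\ having at hand --- in a form valid for the group $\mathbb{Z}^{\infty}$ --- the statement that a tail-measurable factor cannot carry entropy. If one prefers a self-contained argument within the tools of the paper, the natural route is to iterate Lemma~\ref{lem:cord_ind} to conclude that under $\mu$ all sites are mutually conditionally independent given $\mathcal{T}$ --- equivalently, $\mu$ is a mixture of product measures on $\mathbb{Z}^{\infty}$ --- and then to exploit $\mathbb{P}$-invariance together with the pointwise ergodic theorem to bound the complexity growth of the tail factor and deduce its entropy is $0$; carrying this out rigorously is the delicate point of the proof.
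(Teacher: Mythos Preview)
Your first step is correct and quite elegant: from Lemma~\ref{lem:cord_ind} you obtain $p_{x,g}(a)=\mu(x_g=a\mid x_{F^c\setminus\{g\}})$ for every finite $F$, and reverse martingale convergence then identifies $p_{x,g}(a)$ with $\mu(x_g=a\mid\mathcal{T})$, so $\phi_\mu$ is indeed tail-measurable. This observation is not in the paper and would, if the rest goes through, yield the stronger conclusion $h(\hat\mu)=0$ for \emph{every} shift- and $\mathbb{P}$-invariant $\mu$, not just measures of maximal entropy.

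The gap is your second step. The inclusion ``tail $\subset$ Pinsker'' is classical for $\mathbb{Z}$-actions and known for $\mathbb{Z}^d$, but for $\mathbb{Z}^{\infty}$ it is not a fact one can simply quote, and the natural route to it --- showing that $\frac{1}{|F_n|}H_\mu(x_{F_n}\mid x_{F_n^c})\to h(\mu)$ --- runs into a genuine difficulty: with $F_n=[-n,n]^n\times\{0\}^{\infty}$, every site of $F_n$ has neighbours in $F_n^c$ (e.g.\ along $e_{n+1}$), so the ``interior'' argument that works for finite $d$ does not carry over directly. You acknowledge this is the hard part, but the alternative you sketch in the last paragraph is too vague to constitute a proof.

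The paper avoids this issue entirely by a different argument that \emph{does} use the maximal-entropy hypothesis. Via Lemma~\ref{lem:mme_rel_ind} one sees that $(\phi_\mu(x))_g$ is simultaneously determined by $x_{\{g\}^c}$ and conditionally independent of $x_{\{g\}^c}$ given $(\phi_\mu(x))_{\{g\}^c}$; hence $(\phi_\mu(x))_g$ is $\hat\mu$-a.s.\ determined by $(\phi_\mu(x))_{\{g\}^c}$. This rigidity of $\hat\mu$ (each coordinate determined by its complement) is then turned into a contradiction with $h(\hat\mu)>0$ by an elementary block-entropy estimate, using only the inequality $\frac{1}{|P|}H_{\hat\mu}(P)\ge h(\hat\mu)$ for finite $P$. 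So the paper's proof is self-contained within the tools already set up; your route is conceptually cleaner but leaves the key analytic fact unproved in the $\mathbb{Z}^{\infty}$ setting.
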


\begin{proof}
Suppose for a contradiction that $h := h(\hat{\mu}) > 0$. First, we note that by definition, for any $g \in \mathbb{Z}^{\infty}$, $(\phi_{\mu}(x))_g$ is $\mu$-a.s. uniquely determined by $x_{\{g\}^c}$ as the set $\{a \in \Sigma \ : \ 
p_{x,g}(a) > 0\}$. But by Lemma~\ref{lem:mme_rel_ind}, $\mu$-a.s., $(\phi_{\mu}(x))_g$ is independent of $x_{\{g\}^c}$ given $(\phi_\mu(x))_{\{g\}^c}$. It is clear that if two random variables are independent and the first is a function of the second, then the first must be constant. Therefore, $(\phi_{\mu}(x))_g$ must be conditionally constant given $(\phi_{\mu}(x))_{\{g\}^c}$. In other words, $\mu$-a.s., $(\phi_{\mu}(x))_{\{g\}^c}$ uniquely determines $(\phi_{\mu}(x))_g$.

This means that there exists $N$ so that $(\phi_{\mu}(x))_0$ is determined by $(\phi_{\mu}(x))_{F_{N} \setminus \{0\}}$ with $\mu$-probability  $1 - \delta$, for some $\delta < \frac{h}{3 \log |\Sigma|}$. Choose $d$ large enough that $\frac{1}{|F_{d}|} H_{\hat{\mu}}(F_{d}) \le h + \epsilon$, where $\epsilon < \frac{h}{3 |F_N|}$. 
We now decompose $H_{\hat{\mu}}(F_N \times F_d)$ as a sum of conditional entropies:
\[
H_{\hat{\mu}}(F_N \times F_d) = H_{\hat{\mu}}((F_N \setminus \{0\}) \times F_d) + H_{\hat{\mu}}(\{0\} \times F_d \ | \ \pi_{(F_N \setminus \{0\}) \times F_d})
\]
\[
\leq (|F_N| - 1) |F_d| (h + \epsilon) + |F_d| \delta \log |\Sigma|
\]
\[
= |F_N| |F_d| \left[h - \frac{h}{|F_N|} + \epsilon - \frac{\epsilon}{|F_N|} + \frac{\delta \log |\Sigma|}{|F_N|}\right]
\leq |F_N| |F_d| \left[h - \frac{h}{|F_N|} + \epsilon + \frac{\delta \log |\Sigma|}{|F_N|} \right]
\]
\[
< |F_N| |F_d| \left[h - \frac{h}{|F_N|} + \frac{h}{3|F_N|} + \frac{h}{3|F_N|}\right] = |F_N| |F_d| h \left(1 - \frac{1}{3|F_N|}\right).
\]
From the general theory of entropy for amenable groups (as in
\cite{ollagnier85}, p. $59$), for any finite set $P\subset
\zz^\infty$, $\frac{1}{|P|} H_{\hat{\mu}}(P) \geq h$, and so we have
a contradiction.

\end{proof}

\begin{proof}[Proof of Theorem~\ref{hind=hinf}] Let $\mu$ be a $\mathbb{P}$-invariant measure of maximal entropy on $\Xinf$.
By Lemma~\ref{indepupdates}, $\phi_\mu$ is a measure-theoretic factor
from the $\zz^{\infty}$-system $(X^{\otimes \infty}, \mu)$ into
$(\widehat{X^{\otimes \infty}}, \hat{\mu})$.

The Rokhlin-Abramov formula gives
$$h(\mu)=h(\hat{\mu})+h(\mu \mid \phi_{\mu}),$$
where $$h(\mu \mid \phi_{\mu}) = \int_{\widehat{X^{\otimes \infty}}}
\lim_{n \rightarrow \infty} \frac{1}{|F_n|}H_\mu(F_n \mid
\phi_{\mu}(x)) \ d\mu$$ is the relative entropy of $\mu$ over
$\phi_{\mu}$. However, by Lemma~\ref{zeroentfactor}, $h(\hat{\mu}) = 0$.
By definition of $\phi_{\mu}$, $\frac{1}{|F_n|} H_\mu(F_n \mid \phi_{\mu}(x)) \le S((\phi_{\mu}(x))_{F_n})$, and so
\begin{equation}\label{hinfdecomp}
h_{\infty}(X) = h(\mu) =
h(\mu \mid \phi_{\mu}) \leq \int_{\widehat{X^{\otimes \infty}}} \limsup_{n \rightarrow \infty} S(\hat{x}_{F_n}) \ d\hat{\mu}(\hat{x}).
\end{equation}

However, for any $\hat{x} \in \widehat{X^{\otimes \infty}}$,
$$ \limsup_{n \rightarrow \infty} S(\hat{x}_{F_n}) \le \hind(X),$$
so $h_{\infty}(X) \leq \hind(X)$. Finally, we already know from \cite{LMP} that $\hind(X) \leq h_{\infty}(X)$, so we have
shown that $h_{\infty}(X) = \hind(X)$.

\end{proof}

As an application, we now verify the formula for the limiting entropy of the subshift $X_{ADD}$ given in the introduction, where $\Sigma$ is the set of letters in the English alphabet
and $ADD$ is the only forbidden word. Given Theorem~\ref{hind=hinf}, clearly it suffices to show that $\hind(X_{ADD}) = \frac{1}{2} \log 25 + \frac{1}{2} \log 26$. We sketch an argument here,
and leave it to the reader to fill in details.

The alphabet $\hat{A}$ of $\widehat{X_{ADD}}$ technically has $2^{26} - 1$ elements, but it is clear that if one wishes to maximize indepedence score, only four should be used.
Define $S_1 = \Sigma \setminus \{A,D\}$, $S_2 = \Sigma \setminus \{A\}$, $S_3 = \Sigma \setminus \{D\}$, and $S_4 = \Sigma$. Then in any legal configuration in $\widehat{X_{ADD}}$, one can find another
legal configuration, using only the $S_i$, with equal or greater independence score, by including all letters of $\Sigma$ except $A$ and $D$ at every site. To compute $\hind(X_{ADD})$, we can
then restrict our attention to configurations using only the $S_i$.

We first note that the word $S_4 S_4 S_4$ is illegal in $\widehat{X_{ADD}}$. Therefore, any word in $\LL(\widehat{X_{ADD}})$ can be written as
\[
w = w_0 S_4^{n_1} w_1 S_4^{n_2} \ldots w_{k-1} S_4^{n_k} w_k,
\]
where each $n_i$ is $1$ or $2$, every $w_i$ except $w_0$ is nonempty, and each $w_i$ contains no $S_4$s. We further note that if $S_4 S_4$ occurs in a legal configuration in $\widehat{X_{ADD}}$, then since $D \in S_4$, the preceding letter $S_i$ must not include $A$, and so must be $S_1$ or $S_2$. But if it is preceded by $S_2$, then since $D \in S_2$, again the preceding letter must be $S_1$ or $S_2$. Continuing in this way, we see that if any $n_i = 2$, then $w_{i-1}$ is either of the form $S_2^k$ or ends with $S_1 S_2^k$ for some $k$. Furthermore, the only $w_{i-1}$ which can possibly be $S_2^k$ is $w_0$, since all other $w_{i-1}$ are preceded by an $S_4$, and $S_4 S_2 S_2$ and $S_4 S_2 S_4$ are both illegal configurations for $\widehat{X_{ADD}}$.

Summarizing, we see that $w$ can be decomposed into (possibly empty) $w_0$ and $w_k$ which contain no $S_4$, the word $S_4^{n_1}$, words of the form $w_i S_4$ where $w_i$ is nonempty and has no $S_4$, and words of the form $w_i S_4 S_4$ where $w_i$ is nonempty, has no $S_4$, and has at least one $S_1$. The reader may check that except for $S_4^{n_1}$, all of the remaining types of words have independence score less than or equal to their length times $\frac{1}{2} \log 25 + \frac{1}{2} \log 26$. Therefore, any $w \in \LL(\widehat{X_{ADD}})$ has independence score less than or equal to $2\log 26 + |w| (\frac{1}{2} \log 25 + \frac{1}{2} \log 26)$, which implies that $\hind(X_{ADD}) \leq \frac{1}{2} \log 25 + \frac{1}{2} \log 26$.

Finally, clearly $\hind(X_{ADD}) \geq \frac{1}{2} \log 25 + \frac{1}{2} \log 26$, since all words $(S_3 S_4)^n$ are in $\LL(\widehat{X_{ADD}})$ and such words have independence score equal to their length times $\frac{1}{2} \log 25 + \frac{1}{2} \log 26$. Therefore, $\hind(X_{ADD}) = \frac{1}{2} \log 25 + \frac{1}{2} \log 26$.

\section{Limiting measures of maximal entropy}\label{mmes}

We now wish to discuss the structure of the measure(s) of maximal
entropy on $X^{\otimes \infty}$. We will show that we can
completely describe the structure of the measures of maximal entropy which are
$\mathbb{P}$-invariant. Denote by $\Xmax \subset \widehat{X^{\otimes
\infty}}$ the set of points $\hat{x}$ with $S(\hat{x})=
\hind(X)$. This is a shift-invariant and $\mathbb{P}$-invariant subset of
$\widehat{\Xinf}$.

For any shift-invariant measure $\nu$ supported on $\widehat{\Xinf}_{\max}$, define a shift-invariant measure
$\Phi(\nu)$ on $\Xinf$ by
\begin{equation}
\label{eq_Phi}
\Phi(\nu) = \int_{\widehat{X^{\otimes \infty}}} \mu_{\hat{x}} \ d\nu(\hat{x}),
\end{equation}
where for any $\hat{x} \in \widehat{\Xinf}_{\max}$, $\mu_{\hat{x}}$ is the independent product
 of the uniform measures over $\hat{x}_g$ for $g \in \mathbb{Z}^{\infty}$.

\begin{theorem}\label{mmestructure}
Any $\mathbb{P}$-invariant measure of maximal entropy $\mu$ on $X^{\otimes \infty}$ is of the form $\mu = \Phi(\hat{\mu})$,
where $\hat{\mu}$ is the push-forward of $\mu$ via $\phi_\mu$ and is  supported on $\Xmax$.
\end{theorem}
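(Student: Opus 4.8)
The plan is to obtain Theorem~\ref{mmestructure} as essentially a repackaging of the lemmas proved in Section~\ref{mainproof}; the one point needing a genuinely new observation is that the pushforward $\hat\mu=\phi_\mu(\mu)$ is concentrated on $\Xmax$, after which the integral representation $\mu=\Phi(\hat\mu)$ is immediate from the definition \eqref{eq_Phi} of $\Phi$.

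First I would record what the earlier lemmas already give for a $\mathbb{P}$-invariant measure of maximal entropy $\mu$ on $\Xinf$. Since $\phi_\mu$ is a measure-theoretic factor map from $(\Xinf,\mu)$ onto $(\widehat{\Xinf},\hat\mu)$ (Lemma~\ref{indepupdates}) and $\mu$ is shift-invariant, $\hat\mu$ is a shift-invariant measure on $\widehat{\Xinf}$. By Lemma~\ref{lem:mme_rel_ind}, $\mu$ satisfies \eqref{eq_mu_over_hat}, i.e.\ $\mu=\int_{\widehat{\Xinf}}\mu_{\hat x}\,d\hat\mu(\hat x)$; tracing through the proof of that lemma, the conditional law of $x_0$ given $\phi_\mu(x)=\hat x$ is $\mu$-a.s.\ uniform on $\hat x_0$, so $H_\mu(\{0\}\mid\phi_\mu)=\int\log|\hat x_0|\,d\hat\mu(\hat x)=S(\hat\mu)$, and since under each $\mu_{\hat x}$ the coordinates are independent, $h(\mu\mid\phi_\mu)=H_\mu(\{0\}\mid\phi_\mu)=S(\hat\mu)$.

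Next I would compute $h(\mu)$ two ways. On the one hand, $\mu$ is a measure of maximal entropy on $\Xinf$, so by Lemma~\ref{lem:hinf_hXinf} and Theorem~\ref{hind=hinf}, $h(\mu)=h(\Xinf)=h_\infty(X)=\hind(X)$. On the other hand, by Lemma~\ref{zeroentfactor} the factor $\hat\mu$ has zero entropy, so the Rokhlin--Abramov formula gives $h(\mu)=h(\hat\mu)+h(\mu\mid\phi_\mu)=h(\mu\mid\phi_\mu)=S(\hat\mu)$. Hence $S(\hat\mu)=\hind(X)$. Now $S(\hat\mu)=\int S(\hat x)\,d\hat\mu(\hat x)$ — by the pointwise ergodic theorem (as recorded in Section~\ref{defns}, the $\limsup$ defining $S(\hat x)$ is $\hat\mu$-a.e.\ an honest limit, and its integral recovers $S(\hat\mu)$) — whereas $S(\hat x)\le\hind(X)$ for every $\hat x\in\widehat{\Xinf}$, a pointwise bound already used in the proof of Theorem~\ref{hind=hinf}. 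Therefore the nonnegative function $\hind(X)-S(\hat x)$ has zero $\hat\mu$-integral, so it vanishes $\hat\mu$-a.e.; that is, $S(\hat x)=\hind(X)$ for $\hat\mu$-a.e.\ $\hat x$, i.e.\ $\hat\mu(\Xmax)=1$.

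Finally, since $\hat\mu$ is a shift-invariant measure supported on $\Xmax$, the measure $\Phi(\hat\mu)$ of \eqref{eq_Phi} is defined, and $\Phi(\hat\mu)=\int_{\Xmax}\mu_{\hat x}\,d\hat\mu(\hat x)=\int_{\widehat{\Xinf}}\mu_{\hat x}\,d\hat\mu(\hat x)=\mu$ by \eqref{eq_mu_over_hat}. This gives $\mu=\Phi(\hat\mu)$ with $\hat\mu$ supported on $\Xmax$, as claimed. I do not expect a substantial obstacle here, since the hard work (de Finetti, the zero-entropy factor, the relative-entropy bound) was already carried out in Section~\ref{mainproof}; the only step requiring care is the identity $S(\hat\mu)=\int S(\hat x)\,d\hat\mu(\hat x)$ combined with the uniform bound $S(\hat x)\le\hind(X)$ on $\widehat{\Xinf}$, the latter amounting to $\hind(\Xinf)=\hind(X)$, which follows from Lemma~\ref{samelang} and the fact from \cite{LMP} that $\hind(X^{\otimes d})=\hind(X)$.
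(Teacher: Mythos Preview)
Your proposal is correct and follows essentially the same route as the paper: invoke Lemma~\ref{lem:mme_rel_ind} to get $\mu=\Phi(\hat\mu)$, then show $\hat\mu$ is supported on $\Xmax$ by squeezing $S(\hat\mu)$ between $h(\mu)=\hind(X)$ and the pointwise bound $S(\hat x)\le\hind(X)$. The only cosmetic difference is that the paper runs the squeeze via the lexicographic-past entropy formula $h(\mu)=H_\mu(\{0\}\mid\pi_{P_\infty})$ together with $h(\hat\mu)=0$, whereas you use Rokhlin--Abramov directly (exactly as in the proof of Theorem~\ref{hind=hinf}); both arguments unwind to the same chain of equalities.
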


\begin{proof}

If $\mu$ is a $\mathbb{P}$-invariant
measure of maximal entropy for $X^{\otimes \infty}$, define $\hat{\mu} = \phi_{\mu}(\mu)$. By Lemma~\ref{lem:mme_rel_ind}, $\mu = \Phi(\hat{\mu})$. We also have the following chain of inequalities:
\[
h_{\infty}(X) = \ h_{\mu}(\{0\} \ | \ \pi_{P_{\infty}}) = \int_{X^{\otimes \infty}} H_{\mu}(\{0\} \ | \ \pi_{P_{\infty}}) \ d\mu(x)
= \int_{X^{\otimes \infty}} H_{\mu}(\{0\} \ | \ \pi_{P_{\infty}} \times \phi_{\mu}) \ d\mu(x)
\]
\[
\leq \int_{X^{\otimes \infty}} H_{\mu}(\{0\} \ | \ \phi_{\mu}) \ d\mu(x) \leq \int_{X^{\otimes \infty}} \log |(\phi_{\mu}(x))_0| \ d\mu(x) = \int_{\widehat{X^{\otimes \infty}}} S(\hat{x}_0) \ d\hat{\mu}(\hat{x})
\leq \hind(X).
\]

The third equality holds since $h({\hat{\mu}}) = 0$. By Theorem~\ref{hind=hinf}, $h_{\infty}(X) = \hind(X)$. Therefore, all inequalities above are in fact equalities. The first inequality being an equality implies that $x_0$ is conditionally $\mu$-independent from $x_P$, given $\phi_{\mu}(x)$. This clearly implies that $\mu$-almost every fiber $\mu_{\hat{x}}$ in the disintegration of $\mu$ over $\phi_{\mu}$ is a sitewise independent product. The second inequality being an equality implies that $\mu$-a.s., the distribution of $\mu_{\hat{x}}$ on a site $g \in \mathbb{Z}^{\infty}$ is uniform over $\hat{x}_g$. Finally, the third inequality being an equality implies that $\hat{\mu}$-a.s., $S(\hat{x}) = \hind(X)$, and so $\hat{\mu}$ is supported on $\Xmax$.




\end{proof}


\begin{theorem}\label{mmecreation}
$\Phi$ is an injective map which sends shift-invariant measures supported on $\widehat{\Xinf}_{\max}$ to measures of maximal entropy on $X^{\otimes \infty}$.
Also, $\Phi$ preserves $\mathbb{P}$-invariance of measures.
\end{theorem}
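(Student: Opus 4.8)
The plan is to prove four things: that $\Phi$ is well-defined, that it preserves $\mathbb{P}$-invariance, that its image consists of measures of maximal entropy, and that it is injective. The first two are quick. Each $\mu_{\hat x}$ is supported on $\Xinf$, since $\hat x\in\widehat{\Xinf}$ means exactly that every $z$ with $z_g\in\hat x_g$ for all $g$ lies in $\Xinf$; the assignment $\hat x\mapsto\mu_{\hat x}$ is measurable (weak-$*$ continuous on cylinders), so $\Phi(\nu)$ is a genuine measure on $\Xinf$, and shift-equivariance of $\hat x\mapsto\mu_{\hat x}$ makes it shift-invariant. For $\mathbb{P}$-invariance: a finite permutation $\rho\in\mathbb{P}$ acts on $\zz^{\infty}$ by permuting coordinates, and since $\mu_{\hat x}$ is an independent product over sites, $\rho\mu_{\hat x}=\mu_{\rho\hat x}$; integrating against $\nu$ gives $\rho\,\Phi(\nu)=\Phi(\rho\nu)$, so $\Phi(\nu)$ is $\mathbb{P}$-invariant whenever $\nu$ is.

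For the maximal-entropy claim I would pass to the joint space $\widehat{\Xinf}\times\Xinf$ equipped with $\tilde\nu$ defined by $d\tilde\nu(\hat x,x)=d\nu(\hat x)\,d\mu_{\hat x}(x)$, whose coordinate projections push $\tilde\nu$ to $\nu$ and to $\mu:=\Phi(\nu)$. By the entropy-via-lexicographic-past formula (valid also for $d=\infty$), $h(\mu)=H_{\mu}(\{0\}\mid\pi_{P_{\infty}})=H_{\tilde\nu}(x_0\mid x_{P_{\infty}})$, and since enlarging the conditioning only decreases Shannon entropy this is at least $H_{\tilde\nu}(x_0\mid x_{P_{\infty}},\hat x)$. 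Under $\tilde\nu$, given $\hat x$ the letter $x_0$ is uniform on $\hat x_0$ and independent of $x_{P_{\infty}}$, so the right-hand side equals $\int\log|\hat x_0|\,d\nu(\hat x)=S(\nu)$. Because $\nu$ is supported on $\Xmax$ we have $S(\hat x)=\hind(X)$ $\nu$-a.e., hence $S(\nu)=\hind(X)$ (immediately for ergodic $\nu$, and in general by ergodic decomposition, each component being again supported on the invariant set $\Xmax$), which by Theorem~\ref{hind=hinf} and Lemma~\ref{lem:hinf_hXinf} equals $h(\Xinf)$. Thus $h(\Phi(\nu))\ge h(\Xinf)$, the reverse inequality being automatic since $\Phi(\nu)$ is a measure on $\Xinf$; so $\Phi(\nu)$ is a measure of maximal entropy.

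The heart of the matter is injectivity, and the plan is to recover $\nu$ from $\mu:=\Phi(\nu)$ as $\nu=\phi_{\mu}(\mu)$. One inclusion is easy: $\tilde\nu$-almost surely, for every $a$ in the ``true'' $\hat x_g$ one has $p_{x,g}(a)=\mu(z_g=a\mid z_{\{g\}^c}=x_{\{g\}^c})>0$, because the true $\hat x$ remains in the support of the posterior given $x_{\{g\}^c}$ and contributes mass $\ge1/|\hat x_g|$; hence $(\phi_{\mu}(x))_g\supseteq\hat x_g$. The substantive point is the reverse inclusion: $\tilde\nu$-a.s.\ $(\phi_{\mu}(x))_g=\hat x_g$, equivalently $\hat x_g$ is already determined by $x_{\{g\}^c}$. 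Granting this, the posterior of $\hat x$ given $x_{\{g\}^c}$ is a point mass, so $(\phi_{\mu}(x))_g$ is exactly $\hat x_g$, and pushing $\phi_\mu$ forward yields $\nu$; injectivity of $\Phi$ is then immediate. To get the reverse inclusion, suppose $\tilde\nu\big((\phi_\mu(x))_0\supsetneq\hat x_0\big)>0$; then by the pointwise ergodic theorem, with positive probability the configuration $\phi_\mu(x)$ has strictly larger sets than $\hat x$ on a positive-density set of coordinates, so $S(\phi_\mu(x))>S(\hat x)=\hind(X)$, which contradicts $S\le\hind(X)$ on $\widehat{\Xinf}$ (Lemma~\ref{lem:hind_equiv}(1)) — provided $\phi_\mu(x)\in\widehat{\Xinf}$ almost surely.

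I expect the legality $\phi_\mu(x)\in\widehat{\Xinf}$ to be the main obstacle: for a general shift-invariant measure it is \emph{false} (e.g.\ for a fixed point of the hard-square model, $\phi_\mu(x)$ can assign $\{0,1\}$ to a whole block of adjacent sites), so it must be extracted from the hypothesis that $\nu$ is supported on $\Xmax$, which is exactly what forbids such ``blocks of slack''. This is the place where, lacking the $\mathbb{P}$-invariance of $\mu$, one cannot simply invoke Lemmas~\ref{lem:cord_ind} and~\ref{indepupdates}; instead I would redo that analysis on the joint system $(\widehat{\Xinf}\times\Xinf,\tilde\nu)$, using the explicit product-mixture structure of $\mu=\Phi(\nu)$ together with the $\Xmax$ constraint in place of permutation symmetry, in the same spirit as the entropy bookkeeping in Lemmas~\ref{indepupdates},~\ref{zeroentfactor} and Theorem~\ref{mmestructure}. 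The underlying conversion — between ``$\hat x_0$ admits a strictly larger allowable set'' and ``$\hat x_0$ is not $x_{\{0\}^c}$-measurable'' — is elementary: given $\hat x_0$, $x_0$ is uniform on $\hat x_0$, so if the conditional law of $\hat x_0$ given $x_{\{0\}^c}$ charged two distinct subsets $A\ne A'$, conditional independence of $\hat x_0$ and $x_0$ would force $A=A'$. Once $\phi_\mu(\mu)=\nu$ is in hand, all parts of the theorem follow.
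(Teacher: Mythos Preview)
Your treatment of well-definedness, shift-invariance, $\mathbb{P}$-invariance, and the maximal-entropy claim is correct; the last of these uses the lexicographic-past formula where the paper uses Rokhlin--Abramov, but the two are equivalent here.

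The genuine gap is in injectivity. Your Plan A --- show $\phi_\mu(x)\in\widehat{\Xinf}$ and then invoke $S\le\hind(X)$ --- is not completed, and you correctly flag that legality of $\phi_\mu(x)$ cannot be obtained from Lemma~\ref{indepupdates} since $\mu=\Phi(\nu)$ need not be $\mathbb{P}$-invariant. Your final paragraph then appeals to ``conditional independence of $\hat x_0$ and $x_0$'' given $x_{\{0\}^c}$, but this is precisely what remains to be proved; as written it is a circular step.

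The fix is already implicit in your maximal-entropy computation, and avoids the legality detour entirely. Your chain
\[
h(\mu)=H_\mu(\{0\}\mid\pi_{P_\infty})\;\ge\;H_{\tilde\nu}(x_0\mid x_{P_\infty},\hat x)=\int\log|\hat x_0|\,d\nu=\hind(X)
\]
is an equality throughout once you know $h(\mu)=\hind(X)$. The equality $H_\mu(\{0\}\mid\pi_{P_\infty})=H_{\tilde\nu}(x_0\mid x_{P_\infty},\hat x)$ says exactly that $x_0\perp\hat x\mid x_{P_\infty}$. But under $\tilde\nu$, given $\hat x$ the coordinate $x_0$ is uniform on $\hat x_0$ and independent of $x_{P_\infty}$; hence the conditional law of $x_0$ given $(x_{P_\infty},\hat x)$ is uniform on $\hat x_0$, and conditional independence forces this law not to vary with $\hat x$ over the posterior. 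Two uniform laws on finite sets coincide only if the sets coincide, so $\hat x_0$ is $x_{P_\infty}$-measurable, and in particular $(\phi_\mu(x))_0=\hat x_0$ almost surely. By shift-invariance the same holds at every site, so $\hat x$ is a measurable function of $x$ and $\phi_\mu(\mu)=\nu$, giving injectivity. (If you prefer working with $\{0\}^c$ rather than $P_\infty$, insert $H_\mu(\{0\}\mid\pi_{\{0\}^c})$ in the chain between the two displayed terms; it too becomes an equality.)

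For comparison, the paper's own proof is extremely terse: it simply asserts that $\phi_{\Phi(\hat\mu)}\!\circ\!\Phi(\hat\mu)=\hat\mu$ is ``easily checked'' and moves on. You were right to press on what this actually requires; the entropy equality above is the clean way to cash it out, and it does not need $\phi_\mu(x)\in\widehat{\Xinf}$ at all.
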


\begin{proof}
Injectivity of $\Phi$ follows from the easily checked fact that $\phi_{\Phi(\hat{\mu})} \circ \Phi(\hat{\mu}) = \hat{\mu}$ for any measure $\hat{\mu}$ supported on $\widehat{\Xinf}_{\max}$.

For any such $\hat{\mu}$, define $\mu := \Phi(\hat{\mu})$. Then $\displaystyle h(\mu) = h(\mu \mid \phi_{\mu}) = \int_{\widehat{X^{\otimes \infty}}} \log |\hat{x}_0|\ d\hat{\mu}(\hat{x})$, but since $\hat{\mu}$ is supported on $\widehat{\Xinf}_{\max}$,
$\displaystyle \int_{\widehat{X^{\otimes \infty}}} \log |\hat{x}_0| \ d\hat{\mu}(\hat{x}) = \hind(\Xinf)$.
Thus, $h(\mu)=\hind(\Xinf)$,which is equal to $h(\Xinf)$ by Theorem~\ref{hind=hinf}, so $\mu$ is a measure of maximal entropy.

If $\hat{\mu}$ is $\mathbb{P}$-invariant, and $\rho \in \mathbb{P}$, then
$$\mu\circ \rho = \int_{\widehat{X^{\otimes \infty}}} \mu_{\hat{x}} \ d\hat{\mu}(\rho \hat{x}) =
\int_{\widehat{X^{\otimes \infty}}} \mu_{\hat{x}} \ d\hat{\mu}(\hat{x})
=\mu,$$ so $\mu$ is $\mathbb{P}$-invariant.

\end{proof}

Theorems~\ref{mmestructure} and \ref{mmecreation} show that there is a bijective correspondence
between the $\mathbb{P}$-invariant measures of maximal entropy on
$X^{\otimes \infty}$ and the $\mathbb{P}$-invariant measures supported on $\Xmax$.

We now recall some technical facts, shown in \cite{LMP}, about independence entropy for
$\mathbb{Z}$ SFTs. It was shown in
Theorem 2 from \cite{LMP} that for any $\mathbb{Z}$ SFT $X$, there
exists a word $\hat{w} \in \mathcal{L}(\hat{X},[1,m])$ such that $\hat{w}^{\infty}
\in \hat{X}$ and $\frac{S(\hat{w})}{m} = \hind(X)$. (Here and in the sequel, for any word $\hat{w}$, $\hat{w}^n$ represents the $n$-fold concatenation $\hat{w} \ldots \hat{w}$, and $\hat{w}^{\infty}$ represents the biinfinite concatenation of infinitely many copies of $\hat{w}$.)
Any such $\hat{w}^{\infty}$ is called a \textit{maximizing cycle} for $\hat{X}$. A word $\hat{w} \in \LL(\hat{X})$ is called a \emph{maximizing word}
for $\hat{X}$ if $\hat{w}^{\infty}$ is a maximizing cycle for $\hat{X}$.

For completeness, we give a self-contained proof of the following refinement of the above statement, which is essentially in \cite{LMP}.

\begin{lemma}\label{simplecycle}
For any $k$-step $\mathbb{Z}$ SFT $X$, $\hat{X}$ has a maximizing word $\hat{w}$ with no repeated $k$-letter subword.
\end{lemma}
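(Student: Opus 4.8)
The plan is to start from an arbitrary maximizing word $\hat w$ for $\hat X$ (one exists by the cited Theorem 2 of \cite{LMP}) and surgically remove repetitions of $k$-letter subwords without lowering the independence score per unit length. Write $\hat w^\infty = \ldots \hat u_{-1} \hat u_0 \hat u_1 \ldots$ as a biinfinite concatenation of its $k$-letter subwords read along a sliding window; since $X$ is a $k$-step SFT, legality of a point in $\hat X$ (equivalently, of the point $\hat w^\infty$ viewed as a subset of $X$) is governed entirely by which $(k{+}1)$-letter windows appear. The key observation is that if the $k$-letter window starting at position $i$ equals the one starting at position $j$ (with $i < j$), then the word $\hat v$ obtained from one period of $\hat w^\infty$ by \emph{excising} the block of coordinates $[i, j-1]$ still has $\hat v^\infty \in \hat X$: every $(k{+}1)$-window of $\hat v^\infty$ is a $(k{+}1)$-window of $\hat w^\infty$, because splicing at a repeated $k$-window glues two legal pieces along a common overlap of length $k$, which for a $k$-step SFT is enough.

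Next I would track what this excision does to the independence score. Let $L$ be the period of $\hat w$, so $S(\hat w) = \sum_{n} \log|\hat w_n|$ over one period and $\hat w^\infty$ has per-site average score $S(\hat w)/L = \hind(X)$. The excised word $\hat v$ has period $L - (j-i)$ and score $S(\hat v) = S(\hat w) - \sum_{n=i}^{j-1}\log|\hat v_n|$. A priori this could lower the per-site average. The standard fix: among all periods of $\hat w^\infty$ containing a repeated $k$-window, if the average score of the excised block $[i,j-1]$ were \emph{strictly greater} than $\hind(X)$, then iterating the excision in the other direction — i.e. keeping the block $[i,j-1]$ and looping it — would produce a periodic point of $\hat X$ with per-site average strictly exceeding $\hind(X)$, contradicting the definition of $\hind$ via Lemma~\ref{lem:hind_equiv}. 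Hence the excised block has average score $\le \hind(X)$, so the surviving word $\hat v$ has per-site average $\ge \hind(X)$; combined with the reverse inequality (always true since $\hat v^\infty \in \hat X$), $\hat v$ is again a maximizing word, now of strictly shorter period.

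Finally I would run this as a descent: as long as the current maximizing word has a repeated $k$-letter subword within a single period, excise and strictly decrease the period length; since the period is a positive integer this terminates, and the terminal word $\hat w$ has no repeated $k$-letter subword while still satisfying $\hat w^\infty \in \hat X$ and per-site score $\hind(X)$, i.e. it is a maximizing word as required. The main obstacle is the bookkeeping in the middle step: one must be careful that "repeated $k$-letter subword within one period" is the right notion (periodicity can create spurious repeats across the seam, but those only help), and that the loop-the-excised-block argument genuinely yields a point of $\hat X$ — again this is exactly where the $k$-step hypothesis is used, since gluing along a length-$k$ overlap preserves legality. Everything else is routine.
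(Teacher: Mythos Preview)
Your proposal is correct and follows essentially the same approach as the paper: split the cyclic word at a repeated $k$-window into two cyclic pieces (your $\hat v$ and the ``looped excised block'' are exactly the paper's $\hat b$ and $\hat a$), use the $k$-step hypothesis to see both pieces are legal in $\hat X$, and use an averaging argument to see both are still maximizing, then descend on length. The only cosmetic difference is that the paper phrases the averaging step directly---since $S(\hat w)=\hind(X)$ is a weighted average of $S(\hat a)$ and $S(\hat b)$, and each is $\le \hind(X)$, both must equal $\hind(X)$---rather than by the contradiction you set up, but the content is identical.
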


\begin{proof} Consider a maximizing word $\hat{w}$ for $\hat{X}$, and denote its length by $m$. If $\hat{w}$ has no repeated $k$-letter subword, we are done. Suppose then that $\hat{w}$ does have a repeated $k$-letter subword, call it $u$. Let's say that $\hat{w}_{[i,i+k-1]} = \hat{w}_{[j,j+k-1]} = u$, $i < j$. Define $\hat{a} = \hat{w}_{[i,j-1]}$ and $\hat{b} = \hat{w}_{[j,m]} \hat{w}_{[1,i-1]}$. We now claim that both $\hat{a}$ and $\hat{b}$ are also maximizing words for $\hat{X}$. Note that every $k$-letter subword of $\hat{a}^{\infty}$ was already a subword of $\hat{w}_{[i,j+k-1]}$, which is clearly in $\LL(\hat{X})$ since $\hat{w}$ is. Similarly, every $k$-letter subword of $\hat{b}^{\infty}$ was already a subword of $\hat{w}_{[k,m]} \hat{w}_{[1,i+k-1]}$, which is in $\LL(\hat{X})$ since $\hat{w}^2$ is. Therefore, both $\hat{a}^{\infty}$ and $\hat{b}^{\infty}$ are in $\hat{X}$.

Finally, we note that since each letter of $\hat{w}$ is contained in exactly one of $\hat{a}$ or $\hat{b}$, $S(\hat{w}) = \hind(X)$ is a weighted average of $S(\hat{a})$ and $S(\hat{b})$. Both $S(\hat{a}) = S(\hat{a}^{\infty})$ and $S(\hat{b}) = S(\hat{b}^{\infty})$ are less than or equal to $\hind(X)$ by definition, so both are equal to $\hind(X)$, and therefore both $\hat{a}^{\infty}$ and $\hat{b}^{\infty}$ are maximizing cycles for $\hat{X}$. Then $\hat{a}$ and $\hat{b}$ are maximizing words for $\hat{X}$, and since each has length less than $\hat{w}$, we can continue this procedure until we arrive at a maximizing word for $\hat{X}$ with no repeated $k$-letter subwords.
\end{proof}

We say that a maximizing word for a $k$-step $\mathbb{Z}$ SFT is \textit{simple} if it has no repeated $k$-letter subwords,
and that a maximizing cycle is simple if it can be written as $\hat{w}^{\infty}$ for a simple maximizing word $\hat{w}$.
Now, for any simple maximizing word $\hat{w}$, we will construct a
specific finite orbit contained in $\Xmax$. The method
is simple: define $x(\hat{w}) \in
\hat{\Sigma}^{\mathbb{Z}^{\infty}}$ by $\displaystyle x(\hat{w})_g = \hat{w}_{\sum
g_i \pmod {|\hat{w}|}}$. Then for any $d,m$ and $g \in \mathbb{Z}^{\infty}$,
$x(\hat{w})_{g + m e_d} = \hat{w}_{\sum g_i + m \pmod {|\hat{w}|}}$, and so
$x(\hat{w})_{g + \zz e_d}$ is just a shift of the sequence
$\hat{w}^{\infty}$. Clearly, this implies that $x(\hat{w}) \in
\Xmax$. Denote by $\mathcal{O}(\hat{w})$ the finite ($\mathbb{P}$-invariant)
orbit of $x(\hat{w})$ under $\mathbb{Z}^{\infty}$, and by $\hat{\mu}_{\hat{w}}$ the
uniform measure on $\mathcal{O}(\hat{w})$. Then by
Theorem~\ref{mmecreation}, $\Phi(\hat{\mu}_{\hat{w}})$ is a $\mathbb{P}$-invariant measure of maximal
entropy on $X^{\otimes \infty}$. 

\begin{theorem}\label{uniqueness}
For any $\mathbb{Z}$ SFT $X$, there is a unique $\mathbb{P}$-invariant measure of maximal entropy on $X^{\otimes \infty}$ if and only if the following two conditions are satisfied:\\

\noindent
{\rm (1)} $\hat{X}$ has a unique (up to shifts) simple maximizing cycle $\hat{w}^{\infty}$

\noindent
{\rm (2)} There is only one finite orbit of points in $\hat{\Sigma}^{\mathbb{Z}^2}$ for which each row and each column is a shift of the sequence $\hat{w}^{\infty}$, namely the orbit of the periodic point $\hat{w}^{(2)}$ defined by $\hat{w}^{(2)}_{(i,j)} = \hat{w}_{i + j \pmod {|\hat{w}|}}$.
\end{theorem}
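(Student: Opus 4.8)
The plan is to reduce everything, via the bijection established in Theorems~\ref{mmestructure} and~\ref{mmecreation}, to a statement about invariant measures on $\Xmax$: there is a unique $\mathbb{P}$-invariant measure of maximal entropy on $X^{\otimes\infty}$ if and only if $\Xmax$ carries a unique shift- and $\mathbb{P}$-invariant probability measure. The key preliminary input is a description of the one-dimensional marginals of such a measure. If $\nu$ is shift-invariant on $\Xmax$, then pushing $\nu$ forward along any coordinate line gives a shift-invariant measure on $\hat X$ of independence score $\hind(X)$, i.e.\ a \emph{maximizing measure} for the locally constant potential $\hat y\mapsto\log|\hat y_0|$ on the $\mathbb{Z}$-SFT $\hat X$. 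Standard ergodic optimization (equivalently, the max-mean-cycle picture on a $1$-step recoding graph $G$ of $\hat X$: a Bellman--Ford potential shows every cycle of the critical subgraph $G_c$ has maximal mean, maximizing measures are exactly those supported on the bi-infinite walks in $G_c$, and the simple cycles of $G_c$ are precisely the simple maximizing cycles of $\hat X$) shows that condition~(1) is equivalent to ``$G_c$ is a single simple cycle,'' equivalently ``$\hat X$ has a unique maximizing measure, the uniform measure on the shifts of $\hat w^\infty$.''

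For the ``if'' direction, assume (1) and (2) and let $\nu$ be an ergodic shift- and $\mathbb{P}$-invariant measure on $\Xmax$; I will show $\nu=\hat\mu_{\hat w}$, which suffices by ergodic decomposition. \textbf{Step (a):} by (1) and the above, the projection of $\nu$ to every coordinate line equals the uniform measure on the shifts of $\hat w^\infty$, so $\nu$-a.s.\ \emph{every} coordinate line of $\hat x$ is a shift of $\hat w^\infty$. \textbf{Step (b):} consequently, for each pair $i<j$ the corresponding two-dimensional marginal of $\nu$ is supported on the set $Y_2\subset\hat\Sigma^{\mathbb{Z}^2}$ of configurations all of whose rows and columns are shifts of $\hat w^\infty$. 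One checks that $Y_2$ is a finite union of periodic orbits: writing such a configuration as $(m,n)\mapsto\hat w_{m+a_n}$, the column condition forces $n\mapsto\sigma^{a_n}\hat w$ (a cyclic shift of $\hat w$) to be periodic in $n$ with period dividing $|\hat w|$, and since $\hat w$ is aperiodic (being a simple maximizing word) this forces $(a_n)$ itself to be periodic. Hence the two-dimensional marginals of $\nu$ are supported on finite orbits, so by (2) they are supported on $\mathcal{O}(\hat w^{(2)})$. \textbf{Step (c):} thus $\nu$-a.s.\ every two-dimensional coordinate projection of $\hat x$ lies in $\mathcal{O}(\hat w^{(2)})$; using aperiodicity of $\hat w$ one linearizes the offsets (for $g$ of finite support choose two coordinates $i,j$ outside that support, read off $\hat x_{g+se_i+te_j}=\hat w_{s+t+c(g)}$, verify $c(g)$ is well defined independently of the choice of $i,j$, and that $c(g+e_k)=c(g)+1$ for every $k$), concluding $\hat x_g=\hat w_{c+\sum_i g_i}$ for a constant $c$, i.e.\ $\hat x\in\mathcal{O}(x(\hat w))$. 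Therefore $\nu$ is the uniform measure on $\mathcal{O}(x(\hat w))$, that is $\nu=\hat\mu_{\hat w}$.

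For the ``only if'' direction I argue by contraposition. If (1) fails there are two non-shift-equivalent simple maximizing words $\hat w_1,\hat w_2$; then $\hat\mu_{\hat w_1}\ne\hat\mu_{\hat w_2}$ (since $x(\hat w_1)\notin\mathcal{O}(x(\hat w_2))$) and, by Theorem~\ref{mmecreation}, $\Phi(\hat\mu_{\hat w_1})\ne\Phi(\hat\mu_{\hat w_2})$ are two distinct $\mathbb{P}$-invariant measures of maximal entropy. If (1) holds but (2) fails, let $\mathcal{O}'\ne\mathcal{O}(\hat w^{(2)})$ be a finite orbit in $Y_2$; from a member $(m,n)\mapsto\hat w_{m+a_n}$ of $\mathcal{O}'$ one extracts a unit $a$ modulo $|\hat w|$, $a\neq1$, for which $(\hat w_{an})_n^{\infty}$ is a shift of $\hat w^\infty$ and $(m,n)\mapsto\hat w_{am+n}$ is not in $\mathcal{O}(\hat w^{(2)})$. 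Then the configurations $\hat x^{(\alpha,c)}_g:=\hat w_{\,c+\sum_i\alpha_i g_i}$ with $\alpha\in\{1,a\}^{\mathbb{N}}$ and $c\in\mathbb{Z}/|\hat w|$ all lie in $\widehat{X^{\otimes\infty}}$ (every coordinate line is a shift of $\hat w^\infty$) and have independence score $\hind(X)$, hence lie in $\Xmax$; moreover $\mathbb{P}$ acts on this family by permuting the sequence $\alpha$, while $\mathbb{Z}^{\infty}$ acts only on $c$. For $p\in[0,1]$ let $\nu_p$ be the push-forward of $(\mathrm{Bernoulli}(p)$ on $\{1,a\}^{\mathbb{N}})\times(\text{uniform on }\mathbb{Z}/|\hat w|)$; each $\nu_p$ is shift- and $\mathbb{P}$-invariant and supported on $\Xmax$, and the $\nu_p$ are pairwise distinct for distinct values of $p(1-p)$, since $2p(1-p)$ is the $\nu_p$-a.s.\ asymptotic frequency of coordinate pairs $i\neq j$ whose two-dimensional projection lies outside $\mathcal{O}(\hat w^{(2)})$. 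Hence there are infinitely many $\mathbb{P}$-invariant measures of maximal entropy, a contradiction.

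The step I expect to be the main obstacle is the structural analysis underlying step (b) and its dual in the ``only if'' part: proving that row/column constrained two-dimensional configurations form a finite union of periodic orbits (so that condition (2), phrased only about \emph{finite} orbits, genuinely constrains the two-dimensional marginals), and, conversely, checking that a failure of (2) really produces a \emph{coordinatewise} family of admissible configurations — in particular verifying that the extra finite orbit of $Y_2$ has the ``affine slope'' form $(m,n)\mapsto\hat w_{am+n}$ — on which a de Finetti/product-measure construction yields genuinely new invariant measures. The aperiodicity of simple maximizing words is used throughout to keep the offset/slope bookkeeping well defined.
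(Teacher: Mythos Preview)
Your ``if'' direction and the ``(1) fails'' half of the ``only if'' direction are essentially the paper's argument, with minor variations: for step~(a) you invoke general ergodic optimization/max-mean-cycle theory where the paper argues directly by decomposing a row into blocks between occurrences of a fixed $k$-letter subword. You are in fact more careful than the paper in one respect: you explicitly verify that $Y_2$ is a finite union of periodic orbits, so that condition~(2) --- literally phrased only about \emph{finite} orbits --- genuinely constrains arbitrary two-dimensional projections. The paper simply asserts this implication.

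The gap is in the ``(1) holds, (2) fails'' case. Your construction needs the extra orbit of $Y_2$ to yield a unit $a\neq 1$ modulo $|\hat w|$ with $(\hat w_{an})_n$ a cyclic shift of $\hat w$, i.e.\ an \emph{affine} extra orbit $(m,n)\mapsto\hat w_{am+n}$. You give no argument for this, and it is not clear it is true in general. Already for $\hat w=AABB$ the sequence $(a_n)=(0,0,2,2)$ gives a non-affine orbit in $Y_2$ (so the claim ``every extra orbit is affine'' is false); in that particular example an affine one ($a=3$) happens also to exist, but for words $\hat w$ in which $(\hat w_{an})_n$ is not a cyclic shift for any unit $a\neq 1$, your construction has nothing to work with, and you would need a separate argument that $Y_2$ then has only the one orbit. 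You yourself flag exactly this step as the main obstacle.

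The paper sidesteps the problem entirely by using the extra two-dimensional point $\widehat{w'}^{(2)}$ directly rather than trying to linearize it. For each $(u_n)\in\{0,1\}^{\mathbb{N}}$ it sets
\[
\tau(i,j,(u_n))_g \;=\; \widehat{w'}^{(2)}_{\bigl(\,i+\sum_{n:\,u_n=0} g_n,\;\; j+\sum_{n:\,u_n=1} g_n\,\bigr)},
\]
so that every coordinate line of $\tau(i,j,(u_n))$ is literally a row or a column of $\widehat{w'}^{(2)}$, hence a shift of $\hat w^\infty$, with no further structural hypothesis on $\widehat{w'}^{(2)}$ required. Pushing forward the product of the uniform measure on $(\mathbb{Z}/|\hat w|)^2$ with a Bernoulli($\alpha$) measure on $\{0,1\}^{\mathbb{N}}$ gives $\mu_\alpha$; its two-dimensional marginal lands in the orbit of $\hat w^{(2)}$ with probability $\alpha^2+(1-\alpha)^2$, which distinguishes the $\mu_\alpha$ for $\alpha\in(0,1/2)$. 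Replacing your slope-based family $\hat x^{(\alpha,c)}$ with this $\tau$-based one removes the gap.
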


\begin{proof}
($\Longrightarrow$) If condition (1) is violated, then $\hat{X}$ has two simple
maximizing cycles $\hat{w}^{\infty}$ and $\widehat{w'}^{\infty}$ which are not shifts of each other, which induce
points $x(\hat{w})$ and $x(\widehat{w'})$ with different
finite orbits $\mathcal{O}(\hat{w})$ and $\mathcal{O}(\widehat{w'})$ contained in $\Xmax$.
Then by injectivity of $\Phi$, $\Phi(\hat{\mu}_{\hat{w}})$ and $\Phi(\hat{\mu}_{\widehat{w'}})$ are distinct $\mathbb{P}$-invariant
measures of maximal entropy on $X^{\otimes \infty}$.\\

If condition (1) is satisfied but condition (2) is violated, then $\hat{X}$ has a unique (up to shifts) simple maximizing cycle $\hat{w}^{\infty}$ and a point $\widehat{w'}^{(2)}$ in $\hat{\Sigma}^{\mathbb{Z}^2}$ whose rows and columns are all shifts of $\hat{w}^{\infty}$, but which is not a shift of $\hat{w}^{(2)}$. We now construct an uncountable family of shift-invariant and $\mathbb{P}$-invariant measures supported on $\widehat{X^{\otimes \infty}}_{\max}$, which by Theorem~\ref{mmecreation} will yield an uncountable family of $\mathbb{P}$-invariant measures of maximal entropy on $X^{\otimes \infty}$. First, for any $\alpha \in (0,0.5)$, define $\eta_{\alpha}$ to be the Bernoulli measure on $\{0,1\}^{\mathbb{N}}$ which gives probability $\alpha$ to $0$ and $1 - \alpha$ to $1$ at each site. Define $\upsilon$ to be the uniform measure on $\{0,1,\ldots,|\hat{w}| - 1\}$. Define a map $\tau$ from $\{0,1,\ldots,|\hat{w}| - 1\}^2 \times \{0,1\}^{\mathbb{N}}$ to $\hat{\Sigma}^{\mathbb{Z}^{\infty}}$ by

\[
\tau(i,j,(u_n))_g := \widehat{w'}^{(2)}_{\left(i + \sum_{\{n \ : \ u_n = 0\}} g_n, \ j + \sum_{\{n \ : \ u_n = 1\}} g_n\right)}.
\]

We first claim that $\tau$ maps to $\widehat{X^{\otimes \infty}}_{\max}$. This is easy to check: by definition, for any $i$, $j$, and $(u_n)$, every row of $\tau(i,j,(u_n))$ is just a row or column of $\widehat{w'}^{(2)}$, which will always be a shift of $\hat{w}^{\infty}$. Now, for any $\alpha \in (0,0.5)$, define $\mu_{\alpha}$ to be the push-forward of $\upsilon \times \upsilon \times \eta_{\alpha}$ under $\tau$. Clearly each $\mu_{\alpha}$ is a measure on $\widehat{X^{\otimes \infty}}_{\max}$. The reader may check that $\mathbb{P}$-invariance of $\mu_{\alpha}$ follows from the fact that $\eta_{\alpha}$ is i.i.d., and that shift-invariance of $\mu_{\alpha}$ follows from the uniformity of $\upsilon$ and the fact that $\widehat{w'}^{(2)}$ is periodic with respect to $(|\hat{w}|,0)$ and $(0,|\hat{w}|)$. All that remains is to show that all $\mu_{\alpha}$ are distinct. For any $\alpha$, define $\nu_{\alpha}$ to be the marginalization of $\mu_{\alpha}$ to $\mathbb{Z}^2 \times \{0\}^{\infty}$. It is clear that $\nu_{\alpha}$ is always a finitely supported measure, which is a shift of $\hat{w}^{(2)}$ with probability $\alpha^2 + (1-\alpha)^2$, a shift of $\widehat{w'}^{(2)}$ with probability $\alpha(1-\alpha)$, and a shift of $\underline{\widehat{w'}^{(2)}}$ with probability $\alpha(1-\alpha)$, where $\underline{\widehat{w'}^{(2)}}$ is obtained from $\widehat{w'}^{(2)}$ by permuting the first and second coordinates. But then since $\hat{w}^{(2)}$ is different from both $\widehat{w'}^{(2)}$ and $\underline{\widehat{w'}^{(2)}}$, and since $\alpha^2 + (1-\alpha)^2$ is injective on $(0,0.5)$, clearly all $\nu_{\alpha}$ are distinct, implying that all $\mu_{\alpha}$ are distinct, and therefore that all $\Phi(\mu_{\alpha})$ are distinct $\mathbb{P}$-invariant measures of maximal entropy on $X^{\otimes \infty}$.\\

($\Longleftarrow$) If conditions (1) and (2) are satisfied, then we will show that any shift-invariant 
measure supported on $\Xmax$ is in fact supported on $\mathcal{O}(\hat{w})$. Clearly the only such measure is $\hat{\mu}_{\hat{w}}$, and Theorem~\ref{mmestructure} then implies that the only $\mathbb{P}$-invariant measure of maximal entropy on $X^{\otimes \infty}$ is $\Phi(\hat{\mu}_{\hat{w}})$.

Suppose that $\hat{\mu}$ is a shift-invariant 
measure on $\Xmax$, and choose any $d \in \mathbb{N}$. Then $S(\hat{\mu}) = \int_{\Xmax} \log |\hat{x}_0| \ d\hat{\mu}(\hat{x}) = \int_{\Xmax} S(\hat{x}) \ d\hat{\mu}(\hat{x})$ (by shift-invariance of $\hat{\mu}$). This integral is equal to $\hind(X)$ by definition of $\Xmax$, and is also clearly equal to $\int_{\Xmax} S(\hat{x}_{\zz e_d}) \ d\hat{\mu}(\hat{x})$. Since $S(\hat{x}_{\zz e_d})$ is bounded from above by $\hind(X)$ for all $\hat{x}$, clearly $S(\hat{x}_{\zz e_d}) = \hind(X)$ $\hat{\mu}$-almost surely. Choose $k$ so that $\hat{X}$ is a $k$-step SFT, and consider any $\hat{u}$ which contains no repeated $k$-letter word and for which $\hat{u}^{\infty} \in \hat{X}$ is not a shift of $\hat{w}^{\infty}$. Since $\hat{w}^{\infty}$ was the unique (up to shifts) simple maximizing cycle for $\hat{X}$, $S(\hat{u}) < \hind(X)$. If $\hat{\mu}([\hat{u}]) > 0$, then for $\hat{\mu}$-a.e. $\hat{x} \in \hat{X}$, $\hat{u}$ occurs within $\hat{x}_{\zz e_d}$ with positive frequency. Denote by $\hat{v}$ the $k$-letter prefix of $\hat{u}$. Then we can decompose any $\hat{x}_{\zz e_d}$ as $\ldots \hat{v} u_{-1} \hat{v} u_0 \hat{v} u_1 \hat{v} \ldots$, where $\hat{v} u_i = \hat{u}$ for a set of integers $i$ of positive density. Then, since $(\hat{v} u_i)^{\infty} \in \hat{X}$ for every $i$, the same argument from Lemma~\ref{simplecycle} shows that $S(\hat{v} u_i) \leq \hind(X)$ for all $i$, and so $S(\hat{x}_{\zz e_d}) < \hind(X)$, a contradiction. So, $\hat{\mu}([\hat{u}]) = 0$. Clearly, this shows that $\hat{\mu}$-a.s., $\hat{x}_{\zz e_d}$ is just a shift of $\hat{w}^{\infty}$. By shift-invariance, for $\hat{\mu}$-a.e. $\hat{x} \in \widehat{X^{\otimes \infty}}$ it is also the case that for any $g \in \zz^{\infty}$ and $d \in \mathbb{N}$, $\hat{x}_{g + \zz e_d}$ is also a shift of $\hat{w}^{\infty}$.

Consider any such $\hat{x}$, where every row in every direction is a shift of $\hat{w}^{\infty}$. Then for any dimensions $d_1 < d_2$ and any $g \in \mathbb{Z}^{\infty}$, consider the infinite two-dimensional point $\hat{x}_{g + \mathbb{Z} e_{d_1} + \mathbb{Z} e_{d_2}}$. Each row and column of $\hat{x}_{g + \mathbb{Z} e_{d_1} + \mathbb{Z} e_{d_2}}$ is a shift of $\hat{w}^{\infty}$. But then by condition (2), $\hat{x}_{g + \mathbb{Z} e_{d_1} + \mathbb{Z} e_{d_2}}$ is a shift of $\hat{w}^{(2)}$ and so is periodic with respect to $e_{d_1} - e_{d_2}$. Since this is true for all $d_1$, $d_2$, and $g$, $\hat{x}$ must be periodic with respect to $e_{d_1} - e_{d_2}$ for all $d_1 < d_2$. This implies in turn that $\hat{x}$ is periodic with respect to any $g \in \mathbb{Z}^{\infty}$ with $\sum g_i = 0$. It is simple to check that any point in $\hat{\Sigma}^{\mathbb{Z}^{\infty}}$ which is periodic with respect to all such vectors and whose marginalization to $\mathbb{Z} e_1$ is a shift of $\hat{w}^{\infty}$ must be a shift of $x(\hat{w})$.

We have then shown that any shift-invariant measure $\hat{\mu}$ on $\Xmax$ is supported on $\mathcal{O}(\hat{w})$, which implies that $\Phi(\hat{\mu}_{\hat{w}})$ is the unique $\mathbb{P}$-invariant measure of maximal entropy on $X^{\otimes \infty}$ as explained above.

\end{proof}

The techniques of Theorem~\ref{uniqueness} also allow us to give one more case in which the set of $\mathbb{P}$-invariant measures of maximal entropy can be completely described.

\begin{theorem}\label{disjcycles}
If $X$ is a $\mathbb{Z}$ SFT such that $\hat{X}$ has $k$ different (up to shifts) simple maximizing cycles $(\widehat{w_i})^{\infty}$, $1 \leq i \leq k$, and if no two $\widehat{w_i}$ share a common letter of $\hat{\Sigma}$, and if for each $i \in [1,k]$, there is only one finite orbit of points in $\hat{\Sigma}^{\mathbb{Z}^2}$ for which each row and each column is a shift of the sequence $\widehat{w_i}^{\infty}$, then there are exactly $k$ ergodic $\mathbb{P}$-invariant measures of maximal entropy on $X^{\otimes \infty}$.
\end{theorem}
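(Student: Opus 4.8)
The strategy is to reduce, via the map $\Phi$ of Theorem~\ref{mmecreation}, to counting ergodic measures on $\Xmax$. By Theorem~\ref{mmecreation}, $\Phi$ is injective, sends shift-invariant measures on $\Xmax$ to measures of maximal entropy on $X^{\otimes\infty}$, and preserves $\mathbb{P}$-invariance; by Theorem~\ref{mmestructure}, every $\mathbb{P}$-invariant measure of maximal entropy $\mu$ equals $\Phi(\hat\mu)$ for $\hat\mu=\phi_\mu(\mu)$, which is shift-invariant and $\mathbb{P}$-invariant and supported on $\Xmax$. Hence $\Phi$ restricts to a bijection from the set of shift-invariant $\mathbb{P}$-invariant measures on $\Xmax$ onto the set of $\mathbb{P}$-invariant measures of maximal entropy on $X^{\otimes\infty}$. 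One checks easily that $\Phi$ is affine and weak-$*$ continuous, hence an affine homeomorphism of compact convex sets; moreover $\{\mathbb{P}\text{-invariant MMEs}\}$ is a face of the (compact, convex) set of shift-invariant $\mathbb{P}$-invariant measures on $X^{\otimes\infty}$, since $h(\cdot)$ is affine and bounded above by $h(X^{\otimes\infty})$. Therefore $\Phi$ carries extreme points to extreme points, i.e. it restricts further to a bijection between the ergodic (for the joint $\zz^\infty$-and-$\mathbb{P}$ action) shift-invariant $\mathbb{P}$-invariant measures on $\Xmax$ and the ergodic $\mathbb{P}$-invariant measures of maximal entropy on $X^{\otimes\infty}$. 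So it suffices to prove that $\Xmax$ carries exactly $k$ ergodic shift-invariant $\mathbb{P}$-invariant measures.

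First I would exhibit $k$ of them. For each $i$, the construction preceding Theorem~\ref{uniqueness} gives the point $x(\widehat{w_i})\in\Xmax$ and the uniform measure $\hat\mu_{\widehat{w_i}}$ on its finite $\zz^\infty$-orbit $\mathcal{O}(\widehat{w_i})$. Since $x(\widehat{w_i})_g$ depends on $g$ only through $\sum_j g_j \pmod{|\widehat{w_i}|}$, the group $\mathbb{P}$ fixes $x(\widehat{w_i})$, and $\zz^\infty$ acts on $\mathcal{O}(\widehat{w_i})$ through a transitive $\zz/|\widehat{w_i}|\zz$-action; thus $\hat\mu_{\widehat{w_i}}$ is shift-invariant, $\mathbb{P}$-invariant, and ergodic for the joint action. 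The $k$ measures are distinct — indeed they have disjoint supports, since $x(\widehat{w_i})$ uses only letters occurring in $\widehat{w_i}$ and no two $\widehat{w_i}$ share a letter.

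Next I would show there are no others, adapting the $(\Longleftarrow)$ direction of the proof of Theorem~\ref{uniqueness}. Let $\hat\mu$ be any shift-invariant $\mathbb{P}$-invariant measure on $\Xmax$ (ergodicity is not needed for this step) and fix a direction $d$. Exactly as in the proof of Theorem~\ref{uniqueness}, $S(\hat x_{\zz e_d})=\hind(X)$ for $\hat\mu$-a.e. $\hat x$, and for every simple word $\hat u$ (no repeated $k$-letter subword, $\hat u^{\infty}\in\hat X$) whose cycle $\hat u^{\infty}$ is not a shift of any $\widehat{w_i}^{\infty}$ we have $S(\hat u)<\hind(X)$ — here invoking the hypothesis that the $\widehat{w_i}^{\infty}$ are \emph{all} of the simple maximizing cycles — so the frequency/averaging argument of Lemma~\ref{simplecycle} forces $\hat\mu([\hat u])=0$. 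As in Theorem~\ref{uniqueness} this yields that $\hat\mu$-a.s. each $\hat x_{g+\zz e_d}$ is a shift of $\widehat{w_i}^{\infty}$ for some index $i$. Now the no-shared-letters hypothesis is used: two rows of $\hat x$ through a common site share a letter, which lies in a unique $\widehat{w_i}$, so they use the same $\widehat{w_i}$; by connectedness of $\zz^\infty$, $\hat\mu$-a.s. \emph{all} rows of $\hat x$, in all directions, are shifts of a single $\widehat{w_{i(\hat x)}}^{\infty}$. The event $\{i(\hat x)=i\}$ is shift- and $\mathbb{P}$-invariant, so $\hat\mu$ is a convex combination $\sum_i c_i\hat\mu_i$ of its conditional measures $\hat\mu_i$ on these events; for each $i$ with $c_i>0$, $\hat\mu_i$ is a shift-invariant measure every point of which has all rows shifts of $\widehat{w_i}^{\infty}$, so the last two paragraphs of the proof of Theorem~\ref{uniqueness} (restricting to two-dimensional coordinate planes, applying condition (2) for $\widehat{w_i}$, then passing to all zero-sum vectors) apply verbatim to show $\hat\mu_i$ is supported on $\mathcal{O}(\widehat{w_i})$, i.e. $\hat\mu_i=\hat\mu_{\widehat{w_i}}$. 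Hence the shift-invariant $\mathbb{P}$-invariant measures on $\Xmax$ are exactly the convex combinations of $\hat\mu_{\widehat{w_1}},\ldots,\hat\mu_{\widehat{w_k}}$, so there are precisely $k$ ergodic ones, and the bijection above gives exactly $k$ ergodic $\mathbb{P}$-invariant measures of maximal entropy on $X^{\otimes\infty}$.

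\textbf{Main obstacle.} Everything in the first and second paragraphs is formal (continuity and affineness of $\Phi$, the face/extreme-point bookkeeping, the elementary orbit computation). The one genuinely substantive point is the implication ``$\hat\mu([\hat u])=0$ for all simple non-maximizing $\hat u$'' $\Rightarrow$ ``$\hat\mu$-a.e. row is a shift of some $\widehat{w_i}^{\infty}$'', together with the componentwise use of condition (2): this requires combining the separation of the maximizing cycles provided by disjoint letters with the structure of simple cycles in the $k$-block presentation of $\hat X$. It is a mild strengthening of — and uses exactly the ideas of — the corresponding step in the proof of Theorem~\ref{uniqueness}, so I expect it to be the only place where real care is needed, but not a serious difficulty.
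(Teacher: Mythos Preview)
Your proposal is correct and follows essentially the same approach as the paper's own (sketched) proof: both reduce via $\Phi$ and $\phi_\mu$ to analyzing measures on $\Xmax$, use the row-argument from Theorem~\ref{uniqueness} to force every row to be a shift of some $\widehat{w_i}^{\infty}$, invoke the disjoint-letters hypothesis to get a single index $i$, and then apply condition~(2) as in Theorem~\ref{uniqueness} to pin down the support to $\mathcal{O}(\widehat{w_i})$. The only cosmetic difference is that the paper starts directly with an ergodic $\mathbb{P}$-invariant MME $\mu$ and uses ergodicity to make $i$ almost surely constant, whereas you first classify \emph{all} shift-invariant $\mathbb{P}$-invariant measures on $\Xmax$ as convex combinations of the $\hat\mu_{\widehat{w_i}}$ and then read off the extreme points via the affine bijection $\Phi$; both organizations are equivalent and your extra bookkeeping about faces and extreme points is sound.
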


\begin{proof}

We will only sketch a proof, as the details are almost the same as in the proof of Theorem~\ref{uniqueness}. Firstly, by the same reasoning used there, for any ergodic $\mathbb{P}$-invariant measure of maximal entropy $\mu$ on $\Xinf$ and for any $g \in \zz^{\infty}$ and $d \in \mathbb{N}$, it is $\hat{\mu}$-a.s. the case that $\hat{x}_{g + \zz e_d}$ is a shift of one of the simple maximizing cycles $\widehat{w_i}^{\infty}$. But then since no two $\widehat{w_i}$ share a common letter, this $i$ must be the same for all $g$ and $d$, and by ergodicity, it is $\hat{\mu}$-a.s. constant. So, there exists $i$ for which $\hat{\mu}$-a.s., for any $g \in \zz^{\infty}$ and $d \in \mathbb{N}$, $\hat{x}_{g + \zz e_d}$ is a shift of $\widehat{w_i}^{\infty}$.

Then a similar argument as was used above shows that $\hat{\mu}$ must be supported on $\mathcal{O}(x(\widehat{w_i}))$, and so $\mu = \Phi(\hat{\mu}_{\widehat{w_i}})$. Since there are only $k$ possible choices for $i$, and each clearly gives a different measure, we are done.

\end{proof}

\section{Applications to specific models}\label{sec:application}
The purpose of this section is present some applications of our general results to various specific models which have appeared in the literature.

\subsection{Hard-square model}\label{subsec:hardsquare}
The underlying $\mathbb{Z}$ subshift, also known as the golden mean shift, is
$$\mathcal{H} := \{x \in \{0,1\}^\mathbb{Z}~:~ x_{n}x_{n+1} \ne 11\}.$$
The $d$-dimensional \textit{hard-square model} is then defined as $\mathcal{H}^{\otimes d}$.

It is easily checked that $\hind(\mathcal{H})=\frac{1}{2}\log(2)$ so by our result $h_{\infty}(\mathcal{H})=\frac{1}{2}\log(2)$.
In \cite{LMP}, results of Galvin and Kahn \cite{galvin_kahn04} were used to show directly that $h(\mathcal{H}^{\otimes d}) \to h_{\infty}(\mathcal{H})$ at an
exponential rate, with explicit numerical bounds. It is easily checked that
$(\overline{0\{0,1\}})^{\infty}$ is the unique (up to shifts)
simple maximizing cycle for $\hat{\mathcal{H}}$, and so Theorem~\ref{uniqueness} implies that
there is a unique $\mathbb{P}$-invariant measure of maximal entropy on $\mathcal{H}^{\otimes \infty}$.
In fact by \cite{galvin_kahn04}, uniqueness holds
even without the assumption of $\mathbb{P}$-invariance.
The unique measure of maximal entropy is not weak mixing; since $\mu$-a.s. each point of $\mathcal{H}^{\otimes \infty}$ either
has $0$s on all odd sites or $0$s on all even sites (the parity of $v \in \zz^{\infty}$ is just the parity of the sum of its coordinates), and $\mu$-a.s. only one of these can hold, clearly $\mu$ has an eigenfunction with eigenvalue of $-1$.
The combinatorial methods of Galvin and Kahn
show that this eigenvalue is also present in the (unique) measure of
maximal entropy on $\mathcal{H}^{\otimes d}$ for all sufficiently large $d$.

\subsection{$n$-coloring shifts}

The one-dimensional $n$-coloring shift is
$$\mathcal{C}_n := \{ x \in \{1,\ldots,n\}^\zz~:~ \forall k, \ x_k \ne x_{k+1}\}.$$
The $d$-dimensional \textit{$n$-coloring shift} is defined as $\mathcal{C}_n^{\otimes d}$.

By our results,
 $h_{\infty}(\mathcal{C}_n)=\hind(\mathcal{C}_n)$, which is easily computed to be $\frac{1}{2}\log(\lfloor  n/2 \rfloor)+ \frac{1}{2}\log(\lceil n/2
\rceil)$; there are $n \choose {\lfloor n/2 \rfloor}$ (up to shifts) simple maximizing cycles for $\widehat{\mathcal{C}_n}$, namely
all sequences $(\hat{a} \hat{b})^{\infty}$ for which $\hat{a}$ and $\hat{b}$ form a partition of
$\Sigma$ and $|\hat{a}| = \lfloor \frac{|\Sigma|}{2} \rfloor$. Since no two of these cycles which are not shifts of each other share
a common letter, by Theorem~\ref{disjcycles} there are exactly $n \choose {\lfloor n/2 \rfloor}$
ergodic $\mathbb{P}$-invariant measures of maximal entropy on $h(\mathcal{C}_n^{\otimes \infty})$,
each of which has eigenvalue $-1$ as in the hard-square model.

The case $n=3$ is of particular interest. In \cite{LMP}, it was shown that $h(\mathcal{C}_3^{\otimes d})
\to h_{\infty}(\mathcal{C}_3) = \frac{\log 2}{2}$ exponentially fast. The argument involved creating a correspondence
between configurations in the $d$-dimensional hard-square and $3$-coloring shifts,
and then exploiting the previously mentioned results of Galvin and Kahn.

Also, techniques from recent work of Peled \cite{peled2010} imply that for sufficiently large $d$, $\mathcal{C}_3^{\otimes d}$ has at least $3$ ergodic measures of maximal
entropy. (Compare this with our Theorem~\ref{disjcycles}, which as mentioned above shows that for ``infinite'' $d$, there are exactly $3$ ergodic isotropic measures of
maximal entropy.) We give a rough summary of the (somewhat technical) argument here, and refer the reader to \cite{peled2010} for more details.

Peled's paper is about graph homomorphisms; a graph homomorphism from the graph $G=(V_1,E_1)$ to $H=(V_2,E_2)$ is a function $c \in V_2^{V_1}$, so that $(c_{v},c_w) \in E_2$ whenever $(v,w) \in E_1$. We consider only graph homomorphisms from subgraphs of $\mathbb{Z}^d$ to $\mathbb{Z}$. (We again consider $\mathbb{Z}^d$ as the vertex set of a graph, where $m,n \in \mathbb{Z}^d$ are adjacent iff $\|m-n\|=1$.) Note that each graph homomorphism from a subgraph $G \subseteq \mathbb{Z}^d$ to $\mathbb{Z}$ is a configuration $x \in \mathbb{Z}^{V(G)}$.

Any such graph homomorphism from $G \subseteq \mathbb{Z}^d$ to $\mathbb{Z}$ can be turned into a $3$-coloring on $V(G)$ by the map $\phi$ which coordinatewise sends $n \mapsto n \pmod 3$. If $G$ is the maximal subgraph of $\mathbb{Z}^d$ with vertex set $V(G)$ (in other words, all edges of $\mathbb{Z}^d$ connecting two vertices of $V(G)$ are included), then any $3$-coloring on such $V(G)$ can be ``lifted'' to a graph homomorphism from $G$ to $\mathbb{Z}$ which projects onto the original $3$-coloring under $\phi$. (See Section 4.3 of \cite{schmidt_cohomology_SFT_1995} for details.) In particular, graph homomorphisms from such $G$ to $\mathbb{Z}$ sending any fixed $n\in\mathbb{Z}^d$ to any fixed $t \in \mathbb{Z}$ are in natural bijection to $3$-colorings of $G$ with $n$ colored by $t \pmod 3$.


We need two preliminary results; the first is from \cite{peled2010} and the second was suggested by Peled in a private communication.

\textbf{Fact 1:} There is a sequence $\epsilon_d$ tending to $0$ exponentially fast so that for any $n,d \in \mathbb{N}$, $t \in \mathbb{Z}$, and even site $j \in F_n$ (as in Section~\ref{subsec:hardsquare}, $j$ is even iff $\sum j_i$ is even), if one uniformly chooses a graph homomorphism from $F_{n+1}$ to $\mathbb{Z}$, conditioned on the event that $x_k = t$ for all even boundary sites $k \in F_{n+1} \setminus F_n$, then the probability that $x_j = t$ is greater than $1 - \epsilon_d$.

\textbf{Fact 2:} There is a uniform constant $p > 0$ so that for any $n,d \in \mathbb{N}$, $t \in \mathbb{Z}$, and even site $j \in F_{n+1} \setminus F_n$, if one uniformly chooses a graph homomorphism from $F_{n+1}$ to $\mathbb{Z}$, conditioned on the event that $x_j = t$, then the probability that $x_k = t$ for all even $k \in F_{n+1} \setminus F_n$ is greater than $p^{|F_{n+1} \setminus F_n|}$.

We first turn these into statements about $3$-colorings by using the above described bijections. We note that for any $n$ and $d$ and graph homomorphism $f$ from $F_{n+1}$ to $\mathbb{Z}$, if we fix the value $f_j$ at a single even site $j$ in $F_{n+1} \setminus F_n$ (say $f_j = t$), then the statements ``$f_k = t$ for all even $k \in F_{n+1} \setminus F_n$'' and ``$(\phi f)_k = t \pmod 3$ for all even $k \in F_{n+1} \setminus F_n$'' are equivalent. The forward implication is trivial, and the reverse relies on noticing that if two nearest even vertices (i.e. distance $2$) have the same label in $\phi f$, then their labels in $f$ differ by a multiple of $3$, and the only possible such multiple of $3$ is $0$ since they are labels of vertices at distance $2$ under a graph homomorphism. Therefore, the above statements can be recast as follows:

\textbf{$3$-coloring version of Fact 1:} For any $n,d \in \mathbb{N}$, $t \in \{0,1,2\}$, and even site $j \in F_n$ (as before, $j$ is even iff $\sum j_i$ is even), if one uniformly chooses a $3$-coloring on $F_{n+1}$, conditioned on the event that $x_k = t$ for all even boundary sites $k \in F_{n+1} \setminus F_n$, then the probability that $x_j = t$ is greater than $1 - \epsilon_d$.

\textbf{$3$-coloring version of Fact 2:} There is a uniform constant $p > 0$ so that for any $n,d \in \mathbb{N}$, $t \in \{0,1,2\}$, and even site $j \in F_{n+1} \setminus F_n$, if one uniformly chooses a $3$-coloring on $F_{n+1}$, conditioned on the event that $x_j = t$, then the probability that $x_k = t$ for all even $k \in F_{n+1} \setminus F_n$ is greater than $p^{|F_{n+1} \setminus F_n|}$.

Now, we define some measures on $\mathcal{C}_3^{\otimes d}$. For any fixed $n, d \in \mathbb{N}$ and $t \in \{0,1,2\}$, define $\nu_{n,d,t}$ to be the uniform measure over all $3$-colorings of $F_{n+1}$ which have label $t$ at all even vertices in $F_{n+1} \setminus F_n$; by the $3$-coloring version of Fact 1, $\nu_{n,d,t}(f_j = t) > 1 - \epsilon_d$ for any even $j \in F_n$. Then define $\eta_{n,d,t}$ to be the measure on $\{0,1,2\}^{\mathbb{Z}^d}$ in which each shifted cube $(F_{n+1}) + (2n+3) v$ ($v \in \mathbb{Z}^d$) is independently assigned according to $\nu_{n,d,t}$; then $\eta_{n,d,t}(f_{j + (2n + 3)v} = t) > 1 - \epsilon_d$ for any even $j \in F_n$ and $v \in \mathbb{Z}^d$. Also, note that the support of $\eta_{n,d,t}$ is not contained in $\mathcal{C}_3^{\otimes d}$. Finally, define $\mu_{n,d,t}$ to be $\frac{1}{|F_{n+1}|} \sum_{v \in F_{n+1}} \sigma_v \eta_{n,d,t}$; clearly $\mu_{n,d,t}$ is an invariant measure, and $\mu_{n,d,t}(f_{j + (2n + 3) v} = t) > \frac{0.5 |F_n| (1 - \epsilon_d) + 0.5 |F_{n+1} \setminus F_n|}{|F_{n+1}|} > 0.5 - 0.5 \epsilon_d$ for any $j \in \mathbb{Z}^d$. Finally, we define $\mu_{d,t}$ to be any weak limit of a subsequence of $\mu_{n,d,t}$ as $n \rightarrow \infty$. It should be clear that $\mu_{d,t}$ is a measure on $\mathcal{C}_3^{\otimes d}$, and that $\mu_{d,t}(f_j = t) \geq 0.5 - 0.5 \epsilon_d$ for any $j \in \mathbb{Z}^d$. If we write $t'$ and $t''$ for the elements of $\{0,1,2\}$ which are not $t$, then it should be clear that $\mu_{d,t}$ is invariant under coordinatewise switching $t'$ and $t''$, and so $\mu_{d,t}(f_j = t') = \mu_{d,t}(f_j = t'') = 0.5(1 - \mu_{d,t}(f_j = t)) \leq 0.25 + 0.25 \epsilon_d$ for every $j \in \mathbb{Z}^d$. This implies that $\mu_{t,d}$, $t = 0,1,2$, are distinct measures as long as $d$ is chosen so large that $\epsilon_d < \frac{1}{3}$, since then $0.5 - 0.5 \epsilon_d > 0.25 + 0.25 \epsilon_d$.

Finally, we claim that each of the three $\mu_{d,t}$ is a measure of maximal entropy. To see this, note that by the $3$-coloring version of Fact 2, the number $N_{n,t,d}$ of $3$-colorings of $F_{n+1}$ with $x_k = t$ for all even $k \in F_{n+1} \setminus F_n$ is greater than $p^{|F_{n+1} \setminus F_n|}$ times the number of $3$-colorings of $F_{n+1}$ with $x_j = t$ for a fixed even site $j \in F_{n+1} \setminus F_n$, and that this is clearly exactly $\frac{1}{3}$ of the total number $\mathcal{L}(\mathcal{C}_3^{\otimes d}, F_{n+1})$ of $3$-colorings of $F_{n+1}$. Therefore,
$\lim_{n \rightarrow \infty} h(\mu_{n,t,d})$ is equal to
\[
\lim_{n \rightarrow \infty} \frac{1}{|F_{n+1}|} \log N_{n,t,d} \geq \limsup_{n \rightarrow \infty} \frac{1}{|F_{n+1}|} \log \left(\frac{1}{3} p^{|F_{n+1} \setminus F_n|} |\mathcal{L}(\mathcal{C}_3^{\otimes d}, F_{n+1})|\right)
\]
\[
= \limsup_{n \rightarrow \infty} \frac{|F_{n+1} \setminus F_n| \log p}{|F_{n+1}|} - \frac{\log 3}{|F_{n+1}|} +
\frac{\log |\mathcal{L}(\mathcal{C}_3^{\otimes d}, F_{n+1})|}{|F_{n+1}|} =
\lim_{n \rightarrow \infty} \frac{\log |\mathcal{L}(\mathcal{C}_3^{\otimes d}, F_{n+1})|}{|F_{n+1}|},
\]
which is equal to $h(\mathcal{C}_3^{\otimes d})$, and since $\mu_{t,d}$ is a weak limit of a subsequence of $\mu_{n,t,d}$, upper semi-continuity of entropy implies that $h(\mu_{t,d}) = h(\mathcal{C}_3^{\otimes d})$, as claimed. Note that the same argument which proved that $\mu_{0,d}$, $\mu_{1,d}$, and $\mu_{2,d}$ are distinct for large $d$ also shows that none is a linear combination of the other two, and so there must be at least three ergodic measures of maximal entropy on $\mathcal{C}^{\otimes d}$ for large enough $d$.

\subsection{Beach model}

In \cite{burton_steif94},
Burton and Steif defined the \textit{$d$-dimensional beach model},
for any $M > 0$, to be the nearest-neighbor $\mathbb{Z}^d$ SFT on the alphabet $\{-M,\ldots,-1,\newline1,\ldots,M\}$
defined by the restriction that adjacent letters must have product greater than or equal to $-1$.
In other words, a negative and positive cannot be adjacent in any cardinal direction unless they are $1$ and $-1$.
These are clearly all axial powers of the one-dimensional beach model, which we denote by $B_M$.

It is easy to show that for $M>2$, $\hind(B_M) = \log M$ and there are exactly two simple maximizing cycles for $B_M$, namely $\{-M,\ldots,-1\}^{\infty}$ and $\{1,\ldots,M\}^{\infty}$. (When $M=1$, $B_M$ is just the full shift on two symbols, and when $M=2$, there is an additional maximizing cycle $\{-1,1\}^{\infty}$. We will not address these special cases further here.) Therefore, our results show that $h(B_M^{\otimes d}) \rightarrow \log M$, and that $B_M^{\otimes \infty}$ has exactly two ergodic $\mathbb{P}$-invariant measures of maximal entropy.

In fact, it was also shown in \cite{burton_steif94}
that for any fixed $d$ and for $M > 4e 28^d$, $B_M^{\otimes d}$
has exactly two ergodic measures of maximal entropy.
Our result implies the same fact for any fixed $M > 2$ and infinite $d$, which suggests that perhaps this is true for any fixed $M$ and large enough $d$, and in fact this was stated as a conjecture in \cite{burton_steif_survey}.




\subsection{Run-length limited shifts}

For any $0 \leq d < k \leq \infty$, the \textit{$(d,k)$ run-length limited shift}, also denoted by $RLL(d,k)$, is the SFT on the alphabet $\{0,1\}$ consisting of all sequences in which all maximal ``runs'' of $0$s have length inside the interval $[d,k]$. For instance, $RLL(0,\infty)$ is the full shift on two symbols, and $RLL(1,\infty)$ is the usual golden mean shift. For any $0 \leq d < k < \infty$, it was shown in \cite{CMP} that
\[
\hind(RLL(d,k)) = \frac{\lfloor (k-d)/(d+1) \rfloor \log 2}{\lfloor (k+1)/(d+1) \rfloor (d+1)} \textrm{ and}
\]
\[
\hind(RLL(d,\infty)) = \frac{\log 2}{d+1}.
\]

It was shown in \cite{OR}, using combinatorial methods, that for any $d$, $h(RLL(d,\infty)^{\otimes d}) \rightarrow \hind(RLL(d,\infty))$, and that the rate is exponential. Our results show that this same convergence is true for any $d$ and $k$ (but say nothing about the rate.)

It is relatively simple to check that there is a unique (up to shifts) simple maximizing cycle for any $d$ and $k$. (This was essentially done, without using our terminology, in \cite{CMP}.) These cycles are given by the maximizing words
\[
\hat{w} = 0^d \{0,1\} \textrm{ for } RLL(d,\infty) \textrm{ and}
\]
\[
\hat{w} = 0^d 1 (0^d \{0,1\})^{\lfloor (k-d)/(d+1) \rfloor} \textrm{ for } RLL(d,k).
\]

For each of these maximizing cycles $\hat{w}^{\infty}$, the reversed version $(\hat{w}_{|\hat{w}|} \hat{w}_{|\hat{w}|-1} \ldots \hat{w}_1)^{\infty}$ is just a shift of $\hat{w}^{\infty}$. 
Therefore, for any $RLL(d,k)$, we can construct a point $\widehat{w'}^{(2)}$ defined by $\widehat{w'}^{(2)}_{(i,j)} = \hat{w}_{i - j \pmod {|\hat{w}|}}$, in which all rows and columns are shifts of $\hat{w}^{\infty}$. In all cases except $RLL(1,\infty)$ and $RLL(0,1)$ (for which the associated simple maximizing words have length $2$) and $RLL(0,\infty)$ (for which the associated simple maximizing word has length $1$), $\widehat{w'}^{(2)}$ is not equal to the point $\hat{w}^{(2)}$ from Lemma~\ref{uniqueness}, and so if $X$ is any run-length limited shift except $RLL(1,\infty)$, $RLL(0,1)$, or $RLL(0,\infty)$, then $\Xinf$ has multiple $\mathbb{P}$-invariant measures of maximal entropy. For each of these three special cases, $\Xinf$ has a unique $\mathbb{P}$-invariant measure of maximal entropy, which we already knew, as these are simply the golden mean shift, the golden mean shift with digits $0$ and $1$ switched, and the full shift on two symbols, respectively.

\subsection{Even shift}

The \textit{even shift} is the $\zz$ subshift $X_{\mathcal{E}}$ on the alphabet $\{0,1\}$ consisting of all sequences
in which all maximal ``runs'' of $0$s have even length. It is easy to show that
$\hind(X_{\mathcal{E}}) = 0$; in fact, $\hat{\Sigma} = \{\{0\},\{1\},\{0,1\}\}$, and it is not hard to check that
the maximum number of times the symbol $\{0,1\}$ can appear in a point of $\hat{X_{\mathcal{E}}}$ is two.
So, our results imply that $h_{\infty}(X_{\mathcal{E}}) = 0$.

Finding $h_{\infty}(X_{\mathcal{E}})$ was of particular interest to us for two reasons.
Firstly, in \cite{LM}, a combinatorial argument was used to show that for the similarly defined
odd shift $\mathcal{O}$, $h(\mathcal{O}^{\otimes d}) = \frac{1}{2}$ for all $d$.
Secondly, it was shown in \cite{LMP} that $h_{\infty}(X) = 0$ for any $\mathbb{Z}$ SFT $X$
with zero independence entropy, and it was naturally wondered if the same was true for \emph{sofic} shifts.
Recall that a shift is \emph{sofic} if it is a factor of an SFT, which is the case for the even shift.

\subsection{Dyck shift}
The \textit{Dyck shift} is the $\zz$ subshift $\mathcal{D} \subset
(\{\alpha_1,\ldots,\alpha_M\}\cup\{\beta_1,\ldots,\beta_M\})^{\zz}$
obtained by considering the $\alpha_i$'s and $\beta_i$'s as $M$ ``types'' of
opening and closing ``brackets'' respectively. The constraints are that matching open and closed brackets must
be of the same ``type,'' which in our terminology means the same subscript.
This interesting non-sofic shift originated
in the study of formal languages. It was introduced into symbolic dynamics
in \cite{krieger74}, where it was shown that
$h(\mathcal{D})=\log(M+1)$ and that there are exactly $2$ ergodic measures of
maximal entropy.

It is easily shown that for $M > 2$, $\hind(\mathcal{D}) = \log M$, and there are precisely two (up to shifts)
simple maximizing cycles on $\hat{\mathcal{D}}$, namely
$\{\alpha_1,\ldots,\alpha_M\}^{\infty}$ and $\{\beta_1,\ldots,\beta_M\}^{\infty}$.
Thus $h_{\infty}(\mathcal{D}) = \log M$, and since these cycles contain no common letter,
Theorem~\ref{disjcycles} shows that there are precisely two ergodic $\mathbb{P}$-invariant
measures of maximal entropy on $\mathcal{D}^{\otimes \infty}$.

\subsection{Symmetric nearest-neighbor SFTs}
In recent work of Engbers and Galvin (\cite{enbers_galvin2011}), they study, for any
finite undirected graph $\mathbb{H}$, the limiting behavior of the
distribution of uniformly chosen graph homomorphisms from discrete $d$-dimensional $m$-tori
$\mathbb{Z}^d_m = \{1,\ldots,m\}^d$ to $\mathbb{H}$ as $d \to \infty$.
The connection to our work comes from the following fact: for a fixed finite undirected graph $G=(V,E)$, the collection of graph homomorphisms from $\mathbb{Z}^d$ to $G$ is precisely the nearest-neighbor SFT $X_G \subset V^{\mathbb{Z}^d}$ defined by enforcing $(x_{n},x_{n+e_i}) \in E$ for all $n \in \mathbb{Z}^d$ and $i=1 \ldots d$, which is a $d$-dimensional axial power of a symmetric nearest-neighbor SFT.

The authors prove that for any fixed $m$ and undirected finite graph $\mathbb{H} = (\mathbb{V}, \mathbb{E})$,
with probability tending to $1$ as $d \to \infty$, a uniformly chosen
random graph homomorphism $x$ from $\mathbb{Z}^d_m$ to $\mathbb{H}$ has corresponding disjoint $A,B \subset \mathbb{V}$
which induce a complete bipartite graph, with $|A||B|$ maximal, such that $x_n \in A$ for
a large proportion of even vertices $n \in \mathbb{Z}^d_m$ and $x_m \in B$ for a large proportion of odd vertices $m \in \mathbb{Z}^d_m$.

These results are related to ours in that they provide an alternative
proof for some of our results for the particular case of symmetric
nearest-neighbor SFTs. The methods of \cite{LMP} show that for any symmetric nearest-neighbor
SFT $X \subset \Sigma^\mathbb{Z}$, there is always a simple maximizing word in $\widehat{X}$ of length $2$, and that
any such word corresponds to a maximal induced complete bipartite graph of $\mathbb{H}$.
In contrast to this paper, the methods of Engbers and Galvin are finitistic, and do not directly invoke exchangeability. In comparison to our results, they provide further quantitative information about the rate of convergence. On the other hand, their work does not seem to directly give any results in the case of non-nearest-neighbor and non-symmetric SFTs.






\section{Further Problems and research directions}\label{future}

Here we summarize a few possible directions for extensions or generalizations of our results.

\subsection{Pressure and equilibrium measures}
In statistical mechanics, it is common to introduce a ``potential'' or ``activity function,'' in which case the role of topological entropy is replaced by topological pressure, and measures of maximal entropy are generalized to equilibrium measures. From the ergodic-theoretic point of view, many results generalize without difficulty (see, for example, \cite{walters_var} for a statement and proof of the variational principle for pressure).

In \cite{enbers_galvin2011} and \cite{galvin_kahn04}, for some specific $\mathbb{Z}$ subshifts, an analysis of equilibrium measures was carried out with respect to a single-site potential on $X^{\otimes d}$ as $d \to \infty$.

It is rather easy to generalize the statements and proofs of our results to the setting where entropy is replaced by pressure with respect to a so-called ``single-site potential'' $f:X \to \mathbb{R}$, given by $f(x)=g(x_0)$ for some function $g:\Sigma \to \mathbb{R}$. From the statistical mechanics viewpoint, $f$ involves no ``interactions'' between sites.

Here is a brief formulation of the analogous result:
For $\hat{w} \in \mathcal{L}(\hat{X},F)$, define $\displaystyle S_f(\hat{w})=\frac{1}{F}\sum_{n \in F} \log \sum_{a \in \hat{w}_n} g(a)$.
Define $P_{ind}(X,f)$ analogously to $\hind(X)$, with $S_f$ replacing $S$,
and
$$P_{\infty}(X,f) = \lim_{d \to \infty} P(X^{\otimes d},f^{(d)}),$$
where $f^{(d)}:X^{\otimes d} \to \mathbb{R}$ is again given by $f(x)=g(x_0)$.

By following our proof of Theorem \ref{hind=hinf}, one can deduce that $P_{ind}(X,f)=P_{\infty}(X,f)$.





\subsection{Finitistic results}

Our techniques involve studying the system $\Xinf$, which is a sort of ``infinite-dimensional'' axial power of $X$. It is natural to wonder what information we can glean about the finite-dimensional axial powers $X^{\otimes d}$. For instance, we have shown that $h(X^{\otimes d}) \rightarrow \hind(X)$, but with no information about the rate. This is a question of particular interest, since it was noted in \cite{LMP} that for all examples where the rate of convergence is known, this rate is exponential.

Another example of useful finite-dimensional information regards measures of maximal entropy. As described in Section~\ref{sec:application}, our results allow us to count the $\mathbb{P}$-invariant ergodic measures of maximal entropy on $\Xinf$ for many models, such as the hard-square model and $n$-coloring shift. It is natural to assume that such results should allow us to draw conclusions about the number of $\mathbb{P}$-invariant ergodic measures of maximal entropy on $X^{\otimes d}$ for large enough $d$, but we have not yet been able to do so.

One reason why we believe such finitistic results should be provable is that one of the keys to our proofs, de Finetti's Theorem, has versions which apply to finite sets of exchangeable random variables \cite{MR577313}. We hope to use these finite versions to answer some finitistic questions in future work.


One specific case in which exponential convergence of $h(X^{\otimes d})$ to $\hind(X)$ has been proven is in the case where $X$ is an SFT and $\hind(X) = 0$. (\cite{LMP}) Interestingly, in the case where $X$ is nearest-neighbor, this convergence is trivial.

\begin{lemma}
For any nearest-neighbor $\mathbb{Z}$ SFT $X$ with zero independence entropy,
$h(X^{\otimes 2}) = 0$.
\end{lemma}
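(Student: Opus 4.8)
The plan is to prove the two inequalities separately. The inequality $h(X^{\otimes 2})\ge 0$ is free: $h(X^{\otimes 2})\ge\lim_d h(X^{\otimes d})=h_\infty(X)$ by monotonicity of $h(X^{\otimes d})$ in $d$, and $h_\infty(X)=\hind(X)=0$ by Theorem~\ref{hind=hinf} (with Lemma~\ref{lem:hinf_hXinf}). So the whole content is $h(X^{\otimes 2})\le 0$. Write $X=X(G)$ for the directed graph $G$ on $\Sigma$ whose edges are the legal transitions, and let $\hat G$ be the multi-choice graph on $\hat\Sigma$ (with $A\to B$ an edge iff $a\to b$ in $G$ for all $a\in A$, $b\in B$), so $\widehat X$ is the vertex shift of $\hat G$. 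I would first record, via Lemma~\ref{lem:hind_equiv}, that $\hind(X)=0$ is equivalent to the purely combinatorial statement that \emph{no vertex $A$ of $\hat G$ with $|A|\ge 2$ lies on a cycle of $\hat G$}: a cycle through such an $A$ gives a point $\hat w^{\infty}\in\widehat X$ with $S(\hat w^{\infty})>0$, while conversely any $\hat x$ with $S(\hat x)>0$ must visit a vertex of size $\ge 2$ with positive frequency, forcing it onto a cycle. In particular every walk in $\hat G$ visits each vertex of size $\ge 2$ at most once, so every word of $\widehat X$ (of any length) has at most $N\le 2^{|\Sigma|}$ non-singleton coordinates.

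The main step is to estimate $|\mathcal{L}(X^{\otimes 2},[1,n]^2)|$ by building a configuration one anti-diagonal at a time. For $t\ge 2$ let $D_t=\{(i,j)\in[1,n]^2:i+j=t\}$; these are independent sets for the $\mathbb{Z}^2$ adjacency graph, so once $D_{t-1}$ has been fixed, the only constraints on $D_t$ are that each $x_{(i,j)}$ be a legal successor in $G$ of both $x_{(i-1,j)}$ and $x_{(i,j-1)}$, together with extendability to the rest of the square. The key claim is: \emph{there is a constant $C=C(X)$ such that for every $t$ and every partial configuration on $\bigcup_{s<t}D_s$ that extends to a legal configuration of $[1,n]^2$, there are at most $C$ legal extensions to $D_t$.} Granting the claim, $|\mathcal{L}(X^{\otimes 2},[1,n]^2)|\le|\Sigma|^{|D_2|}\cdot C^{\,2n-3}=e^{O(n)}$, whence $h(X^{\otimes 2})=\lim_n\frac1{n^2}\log|\mathcal{L}(X^{\otimes 2},[1,n]^2)|=0$. (If $h(X)=0$ then $X(G)$ already has at most polynomially many words of each length and the result is immediate from $|\mathcal{L}(X^{\otimes 2},[1,n]^2)|\le|\mathcal{L}(X,[1,n])|^{n}$; the claim is meant to cover the remaining case $h(X)>0$ uniformly as well.)

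The content of the key claim, and the main obstacle, is to see how $\hind(X)=0$ forces rigidity along each anti-diagonal. At a site $g\in D_t$ at which two distinct values are both legal given $D_{t-1}$ and extendable, the choice at $g$ is linked to the choice at the next site $g+e_1-e_2$ of $D_t$: both $g$ and $g+e_1-e_2$ are neighbors of $g+e_1\in D_{t+1}$, so extendability couples the two choices; running this along $D_t$, the ``currently undetermined'' sets of admissible values trace out a walk in $\hat G$ whose non-singleton vertices mark the positions of genuine freedom. Since $\hind(X)=0$ forbids a non-singleton vertex of $\hat G$ on a cycle, such a walk can pass through at most $N\le 2^{|\Sigma|}$ non-singleton vertices, so the number of independent choices along all of $D_t$ is bounded independently of $n$, giving the claim with, say, $C=|\Sigma|^{2^{|\Sigma|}}$. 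I expect the hard part to be making this propagation rigorous: one must show that the \emph{extendable} (not merely locally legal) choices along an anti-diagonal really are governed by a single such walk in $\hat G$ with no repeated non-singleton vertex, which requires controlling the feedback of constraints from \emph{all} later anti-diagonals, not just $D_{t+1}$ — and it is exactly here that nearest-neighborness (so that $D_t$ is an independent set and the constraints are one step deep) must be combined with the cycle-free structure of the non-singleton part of $\hat G$.
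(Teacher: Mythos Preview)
Your proposal has a genuine gap, precisely at the point you yourself flag as ``the hard part.'' The claim that the extendable option-sets $A_g$ along an anti-diagonal $D_t$ ``trace out a walk in $\hat G$'' is not correct as stated, and the bound on the number of extensions does not follow from the cycle-free structure of the non-singleton part of $\hat G$ in the way you suggest. Here is a small irreducible nearest-neighbor example with $\hind(X)=0$: take $\Sigma=\{a,b,c_1,c_2\}$ with edges $a\to c_1,\ a\to c_2,\ c_1\to a,\ c_2\to b,\ b\to a$. The only non-singleton with an incoming $\hat G$-edge is $\{c_1,c_2\}$ (from $\{a\}$), and it has no outgoing $\hat G$-edge, so no non-singleton lies on a $\hat G$-cycle. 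Now place the letter $a$ on every site of $D_{t-1}$; then for \emph{every} $g\in D_t$ the locally-legal set is $A_g=\{c_1,c_2\}$. These sets do not form a walk in $\hat G$ at all (there is no edge $\{c_1,c_2\}\to\{c_1,c_2\}$), and there are $|D_t|$ non-singletons, not $\le 2^{|\Sigma|}$. The genuine reason only two extensions survive here is a constraint coming from $D_{t+1}$ (namely: $c_1$ and $c_2$ have no common successor), which your $\hat G$-walk picture does not capture.

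The paper's argument avoids this entirely by proving a different, sharper combinatorial fact: after reducing to the irreducible case, for every $a,b\in\Sigma$ there is \emph{at most one} $c$ with $acb\in\mathcal L(X)$ (one predecessor and one successor pin down the middle; two such $c$'s would yield a $\hat X$-cycle through $\{c_1,c_2\}$ via irreducibility). From this, an easy induction shows that for any finite $F\subset\mathbb Z^2$ the boundary configuration on $\partial F$ determines the interior uniquely---peel from a corner site with one known predecessor to the left and one known successor above---so $|\mathcal L(X^{\otimes 2},[1,n]^2)|\le |\Sigma|^{|\partial[1,n]^2|}$ and $h(X^{\otimes 2})=0$. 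Note that your anti-diagonal sweep places two \emph{predecessors} of each new site in $D_{t-1}$, which is the wrong pairing for this lemma; had you swept along diagonals $i-j=t$ instead, each new site would see exactly one predecessor and one successor on the previous diagonal, and the paper's ``unique $c$ with $a\to c\to b$'' would give determinism immediately. (Incidentally, $h(X^{\otimes 2})\ge 0$ is automatic and does not need Theorem~\ref{hind=hinf}.)
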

\begin{proof}
Let $X \subset \Sigma^\mathbb{Z}$ be a n.n. SFT with zero independence entropy.
We can restrict to the non-wandering part of $X$, which is a disjoint union of irreducible
subshifts of finite type, thus we may assume $X$ is irreducible without loss of generality.

First we show that for any $a,b \in \Sigma$,
there is at most one $c \in \Sigma$ such that $acb \in \LL(X)$. If
this were not the case, there would exist distinct $c_1,c_2 \in \Sigma$ with $ac_ib \in \LL(X)$ for $i=1,2$,
and then for some $d_1,\ldots,d_k \in \Sigma$, $\{a\}\{c_1, c_2\} \{b\} \{d_1\} \cdots \{d_k\} \{a\}$ would be
in $\LL(\hat{X})$, which would yield a sequence $(\{a\}\{c_1,c_2\}\{b\}\{d_1\}\cdots\{d_k\})^{\infty}$ with positive independence score, contradicting $h_\mathit{ind}(X)=0$.

Next we claim that
for any finite $F \subset \mathbb{Z}^2$ and any $\delta \in \Sigma^{\partial F}$
there is at most one $X^{\otimes 2}$-admissible configuration
$u \in A^{F \cup \partial F}$ with $u_{\partial F}=\delta$; this is done by induction on $|F|$.

The base case of the induction is $|F|=1$; say $F=\{(n,m)\}$. Then $(n-1,m),(n,m+1) \in \partial F$.
Let $a=\delta_{(n-1,m)}$ and $b=\delta_{(n,m+1)}$: it follows that there is at most one $c$ for which $acb \in \LL(X)$.
Since $X$ is nearest-neighbor, this means that there is at most one $c$ for which $ac$
and $cb$ are both $X$-admissible. Thus, for every letter $d \neq c$, $\delta d$ is not $X^{\otimes 2}$-admissible,
implying that there is at most one $X^{\otimes 2}$-admissible filling of $F \cup \partial F$ given $\delta$.

For the inductive step, choose $(n,m) \in F$ so that $(n-1,m),(n,m+1) \in \partial F$. For instance,
take $(n,m)$ to be the leftmost site in the topmost row of $F$.
By the same argument, there is at most one letter $c_{(n,m)}$ which can fill $(n,m)$ in a $X^{\otimes 2}$-admissible
way given $\delta_{(n-1,m)}$ and $\delta_{(n,m+1)}$.
Now the induction hypothesis on $F \setminus \{(n,m)\}$ implies that
there is at most one $X^{\otimes 2}$-admissible filling of $F \cup \partial F$ given $\delta$ and $c_{(n,m)}$ and,
by definition of $c_{(n,m)}$, at most one $X^{\otimes 2}$-admissible filling of $F \cup \partial F$ given $\delta$.

This implies that $|\mathcal{L}(X^{\otimes 2}, [1,n]^2)| \leq |\Sigma|^{|\partial [1,n]^2|}$ for any $n$, and so $h(X^{\otimes 2}) = 0$.

\end{proof}

For subshifts of finite type which are not nearest-neighbor, there is no finite $d$ for which
zero independence entropy implies $h(X^{\otimes d})=0$; it was demonstrated in \cite{IKNZ99} that for any $d > 1$,
$h(RLL(n,k)^{\otimes d})=0$ iff $k=n+1$, yet $\hind(RLL(n,k))=0$ if $k\le 2n$.

\bibliographystyle{abbrv}
\bibliography{limiting_entropy}

\begin{thebibliography}{10}

\bibitem{bowen_entropy_expansive}
R.~Bowen.
\newblock Entropy-expansive maps.
\newblock {\em Trans. Amer. Math. Soc.}, 164:323--331, 1972.

\bibitem{burton_steif94}
R.~Burton and J.~Steif.
\newblock Non-uniqueness of measures of maximal entropy for subshifts of finite
  type.
\newblock {\em Ergodic Theory Dynam. Systems}, 14:213--335, 1994.

\bibitem{burton_steif_survey}
R.~Burton and J.~Steif.
\newblock Some 2-d symbolic dynamical systems: entropy and mixing.
\newblock In {\em Ergodic Theory and $\mathbb{Z}^d$-actions}, volume 228 of
  {\em London Math. Soc. Lecture Notes Series}, pages 297--306. Cambridge
  University Press, 1996.

\bibitem{MR577313}
P.~Diaconis and D.~Freedman.
\newblock Finite exchangeable sequences.
\newblock {\em Ann. Probab.}, 8(4):745--764, 1980.

\bibitem{enbers_galvin2011}
J.~Engbers and D.~Galvin.
\newblock H-coloring tori.
\newblock Technical Report arXiv:1101.0840, Jan 2011.
\newblock Comments: 39 pages.

\bibitem{galvin_kahn04}
D.~Galvin and J.~Kahn.
\newblock On phase transition in the hard-core model on $\mathbb{Z}^d$.
\newblock {\em Combin. Probab. Comput.}, 13(2):137--164, 2004.

\bibitem{georgii_book}
H.-O. Georgii.
\newblock {\em Gibbs Measures and Phase Transitions}.
\newblock de Gruyter, Berlin, 1988.

\bibitem{hewittsavage}
E.~Hewitt and L.~J. Savage.
\newblock Symmetric measures on {C}artesian products.
\newblock {\em Trans. Amer. Math. Soc.}, 80:470--501, 1955.

\bibitem{IKNZ99}
H.~Ito, A.~Kato, Z.~Nagy, and K.~Zeger.
\newblock Zero capacity region of multidimensional run length constraints.
\newblock {\em Electron. J. Combin.}, 6:Research Paper 33, 16 pp. (electronic),
  1999.

\bibitem{jenk}
O.~Jenkinson.
\newblock Ergodic optimization.
\newblock {\em Discrete Contin. Dyn. Syst.}, 15(1):197--224, 2006.

\bibitem{krieger74}
W.~Krieger.
\newblock On the uniqueness of the equilibrium state.
\newblock {\em Math. Systems Theory}, 8(2):97--104, 1974/75.

\bibitem{LM}
E.~Louidor and B.~Marcus.
\newblock Improved lower bounds on capacities of symmetric 2-dimensional
  constraints using {R}ayleigh quotients.
\newblock {\em IEEE Transactions on Information Theory}, 56(4):1624--1639,
  2010.

\bibitem{LMP}
E.~Louidor, B.~Marcus, and R.~Pavlov.
\newblock Independence entropy of $\mathbb{Z}^d$-shift spaces.
\newblock {\em Acta. Applicandae Mathematicae}, to appear.

\bibitem{Mi}
M.~Misiurewicz.
\newblock A short proof of the variational principle for a {${\bf Z}_{+}^{N}$}
  action on a compact space.
\newblock In {\em International {C}onference on {D}ynamical {S}ystems in
  {M}athematical {P}hysics ({R}ennes, 1975)}, pages 147--157. Ast\'erisque, No.
  40. Soc. Math. France, Paris, 1976.

\bibitem{ollagnier85}
J.~Moulin~Ollagnier.
\newblock {\em Ergodic theory and statistical mechanics}, volume 1115 of {\em
  Lecture Notes in Mathematics}.
\newblock Springer-Verlag, Berlin, 1985.

\bibitem{OR}
E.~Ordentlich and R.~M. Roth.
\newblock Independent sets in regular hypergraphs and multidimensional
  runlength-limited constraints.
\newblock {\em SIAM J. Discrete Math.}, 17(4):615--623 (electronic), 2004.

\bibitem{peled2010}
R.~Peled.
\newblock High-dimensional {L}ipschitz functions are typically flat.
\newblock Technical Report arXiv:1005.4636v1, May 2010.
\newblock Comments: 87 pages.

\bibitem{CMP}
T.~L. Poo, P.~Chaichanavong, and B.~H. Marcus.
\newblock Tradeoff functions for constrained systems with unconstrained
  positions.
\newblock {\em IEEE Trans. Inform. Theory}, 52(4):1425--1449, 2006.

\bibitem{schmidt_cohomology_SFT_1995}
K.~Schmidt.
\newblock The cohomology of higher-dimensional shifts of finite type.
\newblock {\em Pacific J. Math.}, 170(1):237--269, 1995.

\bibitem{walters_var}
P.~Walters.
\newblock A variational principle for the pressure of continuous
  transformations.
\newblock {\em Amer. J. Math.}, 97(4):937--971, 1975.

\bibitem{ward_amenable_abramov_formula}
T.~Ward and Q.~Zhang.
\newblock The {A}bramov-{R}okhlin entropy addition formula for amenable group
  actions.
\newblock {\em Monatsh. Math.}, 114(3-4):317--329, 1992.

\end{thebibliography}
\end{document}